\newtheorem{prop}{Proposition}[section]
\newtheorem{theorem}[prop]{Theorem}
\newtheorem{lemma}[prop]{Lemma}
\newtheorem{corollary}[prop]{Corollary}
\newtheorem*{theorem*}{Theorem}
\theoremstyle{definition}
\theoremstyle{remark}
\newtheorem*{remark*}{Remark}
\newtheorem*{remarks*}{Remarks}
\newtheorem{remark}[prop]{Remark}
\theoremstyle{theorem}
\newcommand{\N}{\mathbb{N}}
\newcommand{\fd}{{\lfloor d\rfloor}}
\newcommand{\ph}{{\varphi}}
\newcommand{\Sym}{\text{\textup{Sym}}}
\newcommand{\Aut}{\text{\textup{Aut}}\,}
\newcommand{\Cay}{\mathcal{C}}
\newcommand*{\diam}{\mathop{\textup{diam}}\nolimits}
\newcommand*\diff{\mathop{}\!\mathrm{d}}
\renewcommand{\hat}{\widehat}
\numberwithin{equation}{section}
\title[Vertex-transitive graphs of polynomial growth]{A finitary structure theorem for vertex-transitive graphs of polynomial growth}
\date{}
\author{Romain Tessera}
\address{Institut de Math\'ematiques de Jussieu-Paris Rive Gauche, France}
\email{tessera@phare.normalesup.org}
\author{Matthew C. H. Tointon}
\address{School of Mathematics, University of Bristol, United Kingdom}
\email{m.tointon@bristol.ac.uk}
\thanks{M. Tointon was supported by the Stokes Research Fellow at Pembroke College, Cambridge; by grant FN 200021\_163417/1 of the Swiss National Fund for scientific research; and by a travel grant from the Pembroke College Fellows' Research Fund.}
\begin{document}
\maketitle\begin{abstract}
We prove a quantitative, finitary version of Trofimov's result that a connected, locally finite vertex-transitive graph $\Gamma$ of polynomial growth admits a quotient with finite fibres on which the action of $\Aut(\Gamma)$ is virtually nilpotent with finite vertex stabilisers. We also present some applications. We show that a finite, connected vertex-transitive graph $\Gamma$ of large diameter admits a quotient with fibres of small diameter on which the action of $\Aut(\Gamma)$ is virtually abelian with vertex stabilisers of bounded size. We also show that $\Gamma$ has moderate growth in the sense of Diaconis and Saloff-Coste, which is known to imply that the mixing and relaxation times of the lazy random walk on $\Gamma$ are quadratic in the diameter. These results extend results of Breuillard and the second author for finite Cayley graphs of large diameter. Finally, given a connected, locally finite vertex-transitive graph $\Gamma$ exhibiting polynomial growth at a single, sufficiently large scale, we describe its growth at subsequent scales, extending a result of Tao and an earlier result of our own for Cayley graphs. In forthcoming work we will give further applications.
\end{abstract}
\tableofcontents

\section{Introduction}
This paper is part of an ongoing project to answer various structural, geometric and analytic questions about vertex-transitive graphs. A \emph{vertex-transitive graph} $\Gamma$ is a graph whose automorphism group acts transitively on its vertex set. To a certain extent, one can use algebraic methods to study a vertex-transitive graph via its automorphism group. The main aim of the present paper is to approximate certain vertex-transitive graphs by \emph{Cayley graphs}, where such methods are much more directly applicable (we recall the definition of a Cayley graph shortly). We also illustrate the utility of such an approximation by presenting a number of corollaries concerning the structure and geometry of certain vertex-transitive graphs, as well as the behaviour of random walks on them. We will refine some of these and present further applications in our forthcoming work \cite{ttBK,ttLie}.

There are a number of different aspects to our results, so in order to keep this introduction as easy as possible to read we start by presenting slightly simplified versions of them; we state more detailed results in \cref{sec:detail}, where we also fix our notation and terminology (including some standard terms that we use freely in this introduction).

Given a group $G$ with a symmetric generating set $S$, we define the \emph{Cayley graph} $\Cay(G,S)$ to be the graph whose vertices are the elements of $G$ with an edge between $x$ and $y$ precisely when $x\ne y$ and there exists $s\in S$ such that $xs=y$. Note that there is a natural vertex-transitive action of $G$ on $\Cay(G,S)$ given by $g\cdot x=gx$, so Cayley graphs are always vertex transitive. 

For the simplified version of our results we concentrate on a type of geometric approximation called a \emph{quasi-isometry}. Quasi-isometries are certain maps between geometric spaces that preserve asymptotic geometry: roughly, to say that two metric spaces are quasi-isometric is to say that their geometric properties are indistinguishable on a `large scale'. For example, any finite graph will be quasi-isometric to a point, which can be interpreted as saying that `every finite graph appears like a point if you look at it from far enough away'; thus, a star cluster in the night sky may appear to the naked eye like a single bright star.

We now give the formal definition. Given $C\geq 1$ and $K\geq 0$ and metric spaces $X,Y$, a map $f:X\to Y$ is said to be a \emph{$(C,K)$-quasi-isometry} if
\[
C^{-1}d(x,y)-K\leq d(f(x),f(y))\leq Cd(x,y)+K
\]
for every $x,y\in X$, and if every $y\in Y$ lies at distance at most $K$ from $f(X)$. We call $C$ and $K$ the \emph{parameters} of the quasi-isometry.
We occasionally say simply that $f$ is a \emph{quasi-isometry} to mean that there exist some finite, but potentially arbitrarily large, $C$ and $K$ such that $f$ is a $(C,K)$-quasi-isometry. We also occasionally write that $f$ is a $(C,?)$-quasi-isometry to mean that $f$ is a $(C,K)$-quasi-isometry for some finite but potentially arbitrarily large $K$.

Thus, one way of saying that a locally finite vertex-transitive graph $\Gamma$ can be approximated by a locally finite Cayley graph $\Gamma'$ is to say that $\Gamma$ and $\Gamma'$ are quasi-isometric. It is known that such an approximation is not available in the case of an arbitrary vertex-transitive graph: Diestel and Leader \cite{dl} constructed a certain locally finite vertex-transitive graph that they conjectured not to be quasi-isometric to any locally finite Cayley graph, and
Eskin, Fisher and Whyte \cite{efw} confirmed this conjecture, answering a question of Woess \cite[Problem 1]{woess}. Nonetheless, Trofimov \cite{trof} has shown that a certain class of vertex-transitive graphs, namely the vertex-transitive graphs of \emph{polynomial growth}, which we define shortly, are always quasi-isometric to locally finite Cayley graphs.

Before we state Trofimov's result it will be convenient to define two terms. First, given a locally finite vertex-transitive graph $\Gamma$, write $\beta_\Gamma(n)$ for the cardinality of a ball of radius $n$ in $\Gamma$ with respect to the graph metric. The graph $\Gamma$ is said to have \emph{polynomial growth} if there exist constants $C,d\ge0$ such that $\beta_\Gamma(n)\le Cn^d$ for every $n\in\N$. Second, we call a group \emph{virtually nilpotent} if it possesses a nilpotent subgroup of finite index.
\begin{theorem}[Trofimov \cite{trof}]\label{thm:trof.geom}
Let $\Gamma$ be a connected, locally finite vertex-transitive graph of polynomial growth. Then $\Gamma$ is $(1,?)$-quasi-isometric to a locally finite Cayley graph whose underlying group has a nilpotent subgroup of finite index.
\end{theorem}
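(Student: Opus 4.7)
The plan is to view $G := \Aut(\Gamma)$, equipped with the topology of pointwise convergence, as a locally compact group of polynomial growth, extract from it a discrete, finitely generated, virtually nilpotent subgroup $\Lambda \le G$ acting vertex-transitively on $V(\Gamma)$ with finite point stabilisers, and then exhibit an explicit Cayley graph of $\Lambda$ that is $(1,?)$-quasi-isometric to $\Gamma$ via the orbit map.

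First I would check that $G$ is a locally compact group of polynomial growth. Local finiteness of $\Gamma$ makes each vertex stabiliser $G_{x_0}$ a compact (in fact profinite) open subgroup, so $G$ is locally compact and its action on $V(\Gamma)$ is continuous and transitive. Taking $\Omega := \{g \in G : d(gx_0, x_0) \le 1\}$ as a symmetric compact generating set, the identity $\Omega^n \cdot x_0 = B_\Gamma(x_0, n)$ gives $\vol(\Omega^n) \le \beta_\Gamma(n)\,\vol(G_{x_0})$, so the polynomial growth hypothesis on $\Gamma$ transfers to polynomial growth of the Haar volumes $\vol(\Omega^n)$.

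Second, I would invoke the structure theorem for locally compact groups of polynomial growth (classically due to Losert, with the finitary refinements developed in earlier sections of the present paper) to obtain a compact normal subgroup $W \triangleleft G$ such that $G/W$ is a virtually nilpotent Lie group. A Malcev-type cocompact lattice construction inside $G/W$, combined with a lift back to $G$ and an adjustment against the compact open subgroup $G_{x_0}$, produces a discrete, finitely generated, virtually nilpotent subgroup $\Lambda \le G$ with $\Lambda \cdot G_{x_0} = G$. This identity is equivalent to vertex-transitivity of $\Lambda$ on $V(\Gamma) = G/G_{x_0}$, and $\Lambda_{x_0} := \Lambda \cap G_{x_0}$ is finite, being simultaneously discrete (in $\Lambda$) and compact (in $G_{x_0}$).

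Third, I would take $S := \{\lambda \in \Lambda \setminus \{e\} : d(\lambda x_0, x_0) \le 1\}$, a finite symmetric generating set of $\Lambda$: finiteness follows from local finiteness of $\Gamma$ and finiteness of $\Lambda_{x_0}$, and the generating property follows from a standard lifting argument along a $\Gamma$-geodesic, using vertex-transitivity of $\Lambda$ and the inclusion $\Lambda_{x_0} \setminus \{e\} \subseteq S$. The same argument yields
\[
d_\Gamma(\lambda x_0, x_0) \;\le\; |\lambda|_S \;\le\; d_\Gamma(\lambda x_0, x_0) + 1
\]
for every $\lambda \in \Lambda$, so the orbit map $\lambda \mapsto \lambda x_0$ is a surjection from $\Cay(\Lambda, S)$ onto $V(\Gamma)$ whose pairwise distances differ from those in $\Gamma$ by at most $1$, giving the required $(1,1)$-quasi-isometry.

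The main obstacle is the extraction of $\Lambda$ in Step 2. Not every transitive locally compact group action admits a discrete transitive subgroup (as the Eskin-Fisher-Whyte result on Diestel-Leader graphs cited before the theorem shows in the exponential growth regime), so it is essential that polynomial growth is used. The argument requires both the structure theorem for locally compact groups of polynomial growth and a careful lift-and-adjust procedure to arrange discreteness, finite generation, virtual nilpotency, and vertex-transitivity of $\Lambda$ on $V(\Gamma)$ itself (rather than merely on a quotient $V(\Gamma)/{\sim}$) simultaneously.
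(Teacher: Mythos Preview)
Your Step~2 is where the argument breaks down, and you yourself flag it as ``the main obstacle'' without resolving it. You assert that a Malcev-type lattice in $G/W$ can be lifted and adjusted to yield a discrete subgroup $\Lambda\le G$ with $\Lambda\cdot G_{x_0}=G$, i.e.\ acting transitively on $V(\Gamma)$ itself. But the extension $1\to W\to G\to G/W\to 1$ of a discrete virtually nilpotent group by a compact (profinite) group need not split, and even a cocompact discrete $\Lambda\le G$ would a priori only give finitely many orbits on $V(\Gamma)=G/G_{x_0}$, not a single one. The ``lift-and-adjust'' procedure is exactly the content that is missing, and it is not clear that it can be carried out in general. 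Indeed, the paper's remark following \cref{cor:trof} makes the point that removing the hypothesis of a discrete transitive automorphism group is one of the nontrivial consequences of the present work; your proposal effectively assumes this hypothesis can always be arranged.

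The paper (following Woess) sidesteps this entirely: rather than seeking a discrete transitive subgroup acting on $\Gamma$, it takes the compact normal subgroup $H\lhd G$ coming from Losert's theorem and passes to the \emph{quotient graph} $\Gamma/H$. Because $\Aut(\Gamma/H)$ is totally disconnected and $G_{\Gamma/H}$ is (a quotient of) a Lie group, $G_{\Gamma/H}$ is discrete, hence has finite vertex stabilisers and a finite generating set $S$. One then composes the $(1,?)$-quasi-isometry $\Gamma\to\Gamma/H$ (\cref{lem:quotient.QI}, using finiteness of the fibres) with the inverse of the $(1,1)$-quasi-isometry $\Cay(G_{\Gamma/H},S)\to\Gamma/H$ (\cref{lem:aut.QI}) via \cref{lem:QI.elem}. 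The cost of this route is only the unspecified additive constant in the $(1,?)$-quasi-isometry, which is exactly what the theorem allows; your route, if it could be completed, would give a $(1,1)$-quasi-isometry, but that stronger conclusion is neither claimed nor needed.
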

\cref{thm:trof.geom} is actually a consequence of a more detailed result, which we state below as \cref{thm:trof} (see \cref{rem:trof.QI}).

An important ingredient in the proof of Theorem \ref{thm:trof.geom} is Gromov's famous theorem that if a Cayley graph has polynomial growth then the underlying group is virtually nilpotent \cite{gromov}. This result has been the subject of much work since Gromov's original paper, with Kleiner \cite{kleiner} and Ozawa \cite{ozawa} giving shorter proofs and Shalom and Tao \cite{shalom-tao} giving a more effective result, for example. Of particular relevance to the present paper is the following finitary refinement of Gromov's theorem.
\begin{theorem}[Breuillard--Green--Tao {\cite[Corollary 11.2]{bgt}}]\label{thm:bgt.gromov}
Given $K\ge1$ there exists $n_0=n_0(K)$ such that if $G$ is a group generated by a finite symmetric set $S$ containing the identity, and if $|S^{2n}|\le K|S^n|$ for some $n\ge n_0$, then $G$ is virtually nilpotent.
\end{theorem}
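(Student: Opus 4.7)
The plan is to reduce Theorem~\ref{thm:bgt.gromov} to the structure theorem for finite approximate groups proved in the same paper \cite{bgt}, which is the main technical achievement behind this finitary version of Gromov's theorem.

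First, I would use the non-abelian Plünnecke--Ruzsa theory (specifically, Tao's versions of the multiplicative energy and covering lemmas) to convert the single-scale doubling hypothesis $|S^{2n}| \le K|S^n|$ into a genuine approximate-group statement. Concretely, one upgrades the doubling bound to a small-tripling bound at scale $n$, and then applies the Ruzsa covering lemma to exhibit a $K'$-approximate group $A$, built from a bounded power of $S^n$, such that $S^n$ is contained in $O_K(1)$ translates of $A$, with $K'$ depending only on $K$. The point is that, although the definition of an approximate group strictly requires a product-set hypothesis rather than the single-scale doubling we are handed, the covering lemma essentially allows the two notions to be traded.

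Second, I would invoke the BGT structure theorem for finite approximate groups: every finite $K'$-approximate group $A$ is controlled by a coset nilprogression $HP$ of rank and nilpotency class bounded in terms of $K'$, in the sense that $A$ is covered by boundedly many translates of $HP$ and conversely $HP$ lies in a bounded power of $A$. The group $\langle HP\rangle$ is then virtually nilpotent of controlled step and rank, and by construction it is commensurable with $S^n$. Since $S$ generates $G$ and $S \subseteq S^n$, a standard commensurability argument (passing to the normal core of the candidate finite-index subgroup and using that $G$ acts on the finite coset space) then upgrades this to the statement that $G$ itself is virtually nilpotent.

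The central obstacle is the structure theorem for approximate groups itself, which is deep and rests on Hrushovski's model-theoretic classification of approximate subgroups via ultraproducts and on the Gleason--Yamabe solution to Hilbert's fifth problem. Compared to that ingredient, the quantitative Ruzsa calculus at the start and the commensurability bookkeeping at the end are essentially routine. The role of the scale threshold $n_0(K)$ is to rule out degenerate small-scale behaviour — at small $n$ the set $S^n$ could, for instance, exhaust a small finite group on which the doubling hypothesis is trivially satisfied but yields no nilprogression structure of substance — and taking $n \ge n_0(K)$, with $n_0$ extracted from the quantitative bounds in the structure theorem, guarantees that the nilprogression produced captures genuine nilpotent geometry inside $G$.
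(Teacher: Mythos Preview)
The paper does not prove \cref{thm:bgt.gromov}; it is quoted directly from \cite[Corollary 11.2]{bgt} and used as a black box. So there is no ``paper's own proof'' to compare against. That said, your outline is essentially the argument of \cite{bgt}: pass from the doubling hypothesis to an approximate group via Ruzsa covering (cf.\ \cref{prop:tripling} here), apply the BGT structure theorem to produce a coset nilprogression $HP$, and then argue that $\langle HP\rangle$ has finite index in $G$. Indeed the more refined statement \cref{thm:bgt.grom.ag}, also quoted from \cite{bgt}, makes this structure explicit.

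Your explanation of the threshold $n_0(K)$, however, is wrong. If $S^n$ exhausts a finite group then $G$ is finite and trivially virtually nilpotent, so that is not the obstruction. The actual role of $n_0$ is in the final step: the structure theorem covers $S^n$ by $O_K(1)$ cosets of the subgroup $\langle HP\rangle$, but this alone does not force $\langle HP\rangle$ to have finite index in $G$. One needs the elementary fact (\cref{lem:cosets.increasing} in this paper) that if a subgroup has index at least $m$ then $S^{m-1}$ already meets $m$ cosets. Taking $n_0$ at least the $O_K(1)$ covering number then forces $[G:\langle HP\rangle]\le O_K(1)$. Without this, the nilprogression could in principle generate a proper infinite-index subgroup and the conclusion would fail. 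Your ``commensurability argument'' glosses over exactly this point, and your stated reason for $n_0$ does not identify it.
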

This is indeed a refinement of Gromov's theorem, since, as is well known, polynomial growth implies the existence of some $K\ge1$ and infinitely many $n\in\N$ such that $|S^{2n}|\le K|S^n|$ (for a proof see \cref{lem:poly.pigeon}, below). The main purpose of this paper is to prove an analogous finitary version of \cref{thm:trof.geom}, as follows.
\begin{theorem}[main result, simple form]\label{thm:main.geom}
For every $K\ge1$ there exists $n_0=n_0(K)$ such that the following holds. Let $\Gamma$ be a connected, locally finite vertex-transitive graph, and suppose that there exists $n\ge n_0$ such that $\beta_\Gamma(3n)\le K\beta_\Gamma(n)$. Then $\Gamma$ is $(1,O_K(n))$-quasi-isometric to a locally finite Cayley graph whose underlying group has a nilpotent normal subgroup of rank, step and index at most $O_K(1)$.
\end{theorem}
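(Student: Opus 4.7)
My plan is to split \cref{thm:main.geom} into two components: a finitary version of Trofimov's structure theorem, which I expect will constitute the bulk of the technical work in the paper, followed by a routine construction of a Cayley graph from a virtually nilpotent vertex-transitive group action. The finitary Trofimov I would aim to establish asserts that under the hypothesis of \cref{thm:main.geom}, there exists an $\Aut(\Gamma)$-invariant equivalence relation $\sim$ on $V(\Gamma)$ with classes of diameter $O_K(n)$, together with a virtually nilpotent discrete group $H$ acting vertex-transitively on the quotient $\bar\Gamma:=\Gamma/\sim$, with vertex stabilisers of size $O_K(1)$ and a nilpotent normal subgroup of step, rank and index $O_K(1)$.

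Granting this, I would produce the Cayley graph as follows. Fix $\bar v_0\in\bar\Gamma$ and let $J=\Stab_H(\bar v_0)$, so $|J|\le O_K(1)$. Let $S\subseteq H$ consist of the non-identity elements $h$ with $d_{\bar\Gamma}(h\bar v_0,\bar v_0)\le 1$; this is a finite symmetric generating set for $H$ by connectedness and transitivity. The orbit map $\phi\colon\Cay(H,S)\to\bar\Gamma$, $h\mapsto h\bar v_0$, is then $1$-Lipschitz and surjective by construction, and since $J\subseteq S\cup\{e\}$ its fibres (cosets of $J$) have $\Cay(H,S)$-diameter at most $1$; hence $\phi$ is a $(1,1)$-quasi-isometry. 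The nilpotency properties of the underlying group are inherited directly from $H$, and local finiteness of $\Cay(H,S)$ follows from local finiteness of $\bar\Gamma$ (itself a consequence of the local finiteness of $\Gamma$) together with the stabiliser bound. Composed with the quotient map $\pi\colon\Gamma\to\bar\Gamma$ --- which must be designed to be a $(1,O_K(n))$-quasi-isometry, a constraint that has to be built into the choice of $\sim$ rather than being automatic --- this yields the desired $(1,O_K(n))$-quasi-isometry from $\Gamma$ to $\Cay(H,S)$.

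The main obstacle, and the heart of the argument, is the finitary Trofimov theorem itself: transferring the single-scale doubling hypothesis $\beta_\Gamma(3n)\le K\beta_\Gamma(n)$ into detailed structural information on the totally disconnected locally compact group $G=\Aut(\Gamma)$. My natural starting point would be the orbit-stabiliser identity $\mu_G(G_n)=\beta_\Gamma(n)\cdot\mu_G(K_0)$, with $K_0=\Stab_G(v_0)$ and $G_n=\{g\in G:d(gv_0,v_0)\le n\}$, which converts the hypothesis directly into Haar-measure doubling on $G$. From there I would seek to apply a locally compact analogue of \cref{thm:bgt.gromov} to extract a virtually nilpotent open subgroup of $G$, and then construct $\sim$ from the orbits of an appropriate compact open normal subgroup, chosen so that its blocks have the prescribed diameter and the induced quotient action has bounded stabilisers and bounded nilpotency complexity. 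Extracting all these bounds simultaneously at a single scale --- rather than asymptotically, as in Trofimov's original argument --- and arranging matters so that a genuine $(1,\cdot)$-quasi-isometry to a Cayley graph emerges at the end, rather than merely a $(C,\cdot)$-quasi-isometry, is where the real technical difficulty will lie.
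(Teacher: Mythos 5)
Your strategy matches the paper's quite closely: the paper also regards $\Aut(\Gamma)$ as a totally disconnected locally compact group, converts the single-scale doubling hypothesis into Haar-measure doubling via the identity $\mu(S^n)=\mu(G_e)\beta_\Gamma(n)$, applies Carolino's locally compact analogue of the Breuillard--Green--Tao theorem (followed by the discrete Breuillard--Green--Tao theorem on the resulting totally-disconnected-hence-discrete Lie quotient), and builds the equivalence relation from the orbits of a compact open normal subgroup $H\lhd G$, so that $\Gamma\to\Gamma/H$ is a $(1,O_K(n))$-quasi-isometry and the orbit map $\Cay(G_{\Gamma/H},S)\to\Gamma/H$ is a $(1,1)$-quasi-isometry. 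One calibration: the $(1,\cdot)$ multiplicative constant in the final composite quasi-isometry falls out automatically once the pieces are set up (the paper's \cref{lem:QI.elem,lem:quotient.QI,lem:aut.QI}), whereas the genuinely new technical difficulty beyond Woess's proof of Trofimov's theorem is precisely the $O_K(1)$ bound on the vertex stabilisers of the quotient action that you mention in passing; the paper obtains it by a careful choice of $H$ that absorbs $(\overline G_e\cap N)^{\overline G}$, using \cref{prop:normal.closure} on normal closures of subgroups inside nilpotent approximate groups.
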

Again, this is a consequence of a more detailed result, which we state below as \cref{thm:main}.
\begin{remark*}
Since it has no bearing on our applications, we do not consider in this paper whether it would be possible to replace the hypothesis $\beta_\Gamma(3n)\le K\beta_\Gamma(n)$ in the statement of \cref{thm:main.geom} with $\beta_\Gamma(2n)\le K\beta_\Gamma(n)$ as in \cref{thm:bgt.gromov}. If $G$ had a bi-invariant Haar measure when viewed as a topological group as in \cref{sec:top} then this would be possible using results of \cite{tao.product.set}.
\end{remark*}

To give a flavour of the kinds of results we can prove using \cref{thm:main.geom}, we now present two corollaries. We present further corollaries in \cref{sec:detail}, after the more detailed version of \cref{thm:main.geom}.

Breuillard, Green and Tao showed that if $\Gamma$ is a Cayley graph satisfying $\beta_\Gamma(n)\le n^d\beta_\Gamma(1)$
for large enough $n$ then $\beta_\Gamma(m)\le m^{O_d(1)}\beta_\Gamma(1)$ for every $m\ge n$ \cite[Corollary 11.9]{bgt}; thus, if a ball of large enough radius in a Cayley graph exhibits polynomial growth then all larger balls also exhibit polynomial growth. Tao subsequently gave a much more detailed description of the further growth of $\beta_\Gamma(m)$ in this setting. In order to state his result precisely we define a function $f:[1,\infty)\to[1,\infty)$ to be \emph{piecewise monomial} if there exist $1=x_0<x_1<\ldots<x_k=\infty$ and $C_1,\ldots,C_k$ and $d_1,\ldots,d_k\ge0$ such that $f(x)=C_ix^{d_i}$ whenever $x\in[x_{i-1},x_i)$. We call the restrictions $f|_{[x_{i-1},x_i)}$ the \emph{pieces} of $f$, and each $d_i$ the \emph{degree} of the piece $f|_{[x_{i-1},x_i)}$. Tao's result states that if $\Gamma$ is a Cayley graph satisfying $\beta_\Gamma(n)\le n^d\beta_\Gamma(1)$ for large enough $n$ then there exists a non-decreasing continuous piecewise-monomial function $f:[1,\infty)\to[1,\infty)$ with $f(1)=1$ and at most $O_d(1)$ distinct pieces, each of degree a non-negative integer at most $O_d(1)$, such that $\beta_\Gamma(mn)\asymp_df(m)\beta_\Gamma(n)$ for every $m\in\N$ \cite[Theorem 1.9]{tao.growth} (for an explanation of the notation $\asymp$ and other asymptotic notation see the `Notation' subsection, below). Using \cref{thm:main}, we can extend Tao's result to arbitrary connected locally finite vertex-transitive graphs, as follows.
\begin{corollary}\label{cor:tao.growth}
Given $d>0$ there exists $n_0=n_0(d)$ such that if $n\ge n_0$, and if $\Gamma$ is a connected locally finite vertex-transitive graph such that $\beta_\Gamma(n)\le n^d\beta_\Gamma(1)$,
then there exists a non-decreasing continuous piecewise-monomial function $f:[1,\infty)\to[1,\infty)$ with $f(1)=1$ and at most $O_d(1)$ distinct pieces, each of degree a non-negative integer at most $O_d(1)$, such that
$\beta_\Gamma(mn)\asymp_df(m)\beta_\Gamma(n)$
for every $m\in\N$.
\end{corollary}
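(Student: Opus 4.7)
The plan is to reduce to Tao's analogous result for Cayley graphs \cite[Theorem 1.9]{tao.growth} by using \cref{thm:main.geom} to approximate $\Gamma$ by a Cayley graph on a virtually nilpotent group.

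First, I would find a scale $n_1$ at which the doubling hypothesis of \cref{thm:main.geom} applies. A pigeonhole argument on the telescoping product
\[
\prod_{j=0}^{k-1}\frac{\beta_\Gamma(3^{j+1})}{\beta_\Gamma(3^j)}=\frac{\beta_\Gamma(3^k)}{\beta_\Gamma(1)}\le n^d\qquad(k=\lfloor\log_3 n\rfloor),
\]
restricted to those indices with $3^j\ge n_0(K_0)$ for a suitable constant $K_0=K_0(d)$ (for instance $K_0=3^{2d}$), yields, provided $n$ is sufficiently large in terms of $d$, a scale $n_1\in[n_0(K_0),n]$ with $\beta_\Gamma(3n_1)\le K_0\beta_\Gamma(n_1)$.

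Invoking \cref{thm:main.geom} at scale $n_1$ then produces a $(1,O_d(n_1))$-quasi-isometry $\ph:\Gamma\to\Gamma'$, where $\Gamma'$ is a locally finite Cayley graph on a virtually nilpotent group. A routine ball comparison via $\ph$ (whose multiplicity is at most $\beta_\Gamma(O_d(n_1))$ and whose image $O_d(n_1)$-covers $\Gamma'$) shows that $\Gamma'$ inherits a polynomial bound of the form $\beta_{\Gamma'}(N)\le N^{O_d(1)}\beta_{\Gamma'}(1)$ at some $N\asymp_d n$, so Tao's theorem applies to $\Gamma'$ and yields a non-decreasing continuous piecewise-monomial function $f_0:[1,\infty)\to[1,\infty)$ with $f_0(1)=1$, at most $O_d(1)$ distinct pieces, each of degree a non-negative integer at most $O_d(1)$, such that $\beta_{\Gamma'}(m'n_1)\asymp_d f_0(m')\beta_{\Gamma'}(n_1)$ for every $m'\in\N$.

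The function $f(m):=f_0(mn/n_1)/f_0(n/n_1)$ inherits all the structural properties of $f_0$ and satisfies $f(1)=1$; it remains to transfer the estimate $\beta_{\Gamma'}(mn)\asymp_d f(m)\beta_{\Gamma'}(n)$ back to $\Gamma$ through the QI. This last step is where the main technical obstacle lies: the QI introduces multiplicative errors of the form $\beta_\Gamma(O_d(n_1))$ and $\beta_{\Gamma'}(O_d(n_1))$ when comparing corresponding balls in $\Gamma$ and $\Gamma'$, so one must verify that for $r\ge n$ one has $\beta_\Gamma(r\pm O_d(n_1))\asymp_d\beta_\Gamma(r)$ and $\beta_{\Gamma'}(r\pm O_d(n_1))\asymp_d\beta_{\Gamma'}(r)$. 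Both are consequences of Tao's theorem on $\Gamma'$ (which controls its growth at every scale $\ge n_1$) together with the volume comparison between $\Gamma$ and $\Gamma'$ at the base scales $n$ and $n_1$; once these absorptions are justified, chaining gives
\[
\beta_\Gamma(mn)\asymp_d\beta_{\Gamma'}(mn)\asymp_d f(m)\beta_{\Gamma'}(n)\asymp_d f(m)\beta_\Gamma(n)
\]
for every $m\in\N$, as required.
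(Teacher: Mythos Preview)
Your overall strategy — pigeonhole for a tripling scale, invoke the main theorem to pass to a Cayley graph, apply Tao's theorem there, and transfer back — is exactly what the paper does. The gap is in the transfer step.

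You use only the $(1,O_d(n_1))$-quasi-isometry of \cref{thm:main.geom}. A generic $(1,K)$-quasi-isometry $\ph:\Gamma\to\Gamma'$ gives
\[
\frac{\beta_\Gamma(r)}{\beta_\Gamma(K)}\le\beta_{\Gamma'}(r+K),\qquad
\frac{\beta_{\Gamma'}(r)}{\beta_{\Gamma'}(3K)}\le\beta_\Gamma(r+3K),
\]
so when you bound a ratio such as $\beta_\Gamma(mn)/\beta_\Gamma(n)$ you pick up a factor $\beta_\Gamma(K)\,\beta_{\Gamma'}(3K)$ that does \emph{not} cancel and is not $O_d(1)$: it depends on the degree of $\Gamma$. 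In particular the displayed chain $\beta_\Gamma(mn)\asymp_d\beta_{\Gamma'}(mn)$ is false as stated (take $\Gamma$ to be a long cycle with each vertex blown up to a clique of size $M$; then $\Gamma'$ is essentially the cycle and $\beta_\Gamma/\beta_{\Gamma'}\asymp M$). Your proposed remedy, namely the additive absorptions $\beta_\Gamma(r\pm O_d(n_1))\asymp_d\beta_\Gamma(r)$, addresses the $\pm K$ in the radius but not this multiplicative defect.

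The paper circumvents this by using the detailed form \cref{thm:main} rather than the quasi-isometry alone. Because $\Gamma\to\Gamma/H$ has fibres of \emph{equal} size $|H(e)|$ (\cref{lem:fibres.isom}) and $\Cay(G_{\Gamma/H},S)\to\Gamma/H$ has fibres of equal size $|(G_{\Gamma/H})_{H(e)}|$ (\cref{cor:S^n.cosets}), one obtains (\cref{prop:red.to.cayley})
\[
\frac{|(G_{\Gamma/H})_{H(e)}|}{|H(e)|}\,\beta_\Gamma(m)\;\le\;|S^m|\;\ll_K\;\frac{|(G_{\Gamma/H})_{H(e)}|}{|H(e)|}\,\beta_\Gamma(m+O_K(n_1)),
\]
with the \emph{same} multiplicative constant on both sides. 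This constant then cancels when forming ratios $|S^{r'}|/|S^r|$, leaving only the harmless additive shift that you correctly identify as absorbable via \cref{lem:p/w.mono}. To repair your argument you should invoke \cref{thm:main} and \cref{prop:red.to.cayley} (or reprove the latter from the quotient description) instead of working with an abstract quasi-isometry.
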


In our forthcoming paper \cite{ttLie} we will give an explicit, sharp bound on the maximum degree of the function $f$ in \cref{cor:tao.growth}. We will also give an explicit polynomial bound on the number of monomial pieces that make up $f$.

It is not hard to check that \cref{cor:tao.growth} extends, to vertex-transitive graphs, the Breuillard--Green--Tao result that if $\beta_\Gamma(n)\le n^d\beta_\Gamma(1)$ for some $n\ge n_0$ then $\beta_\Gamma(m)\le m^{O_d(1)}\beta_\Gamma(1)$ for every $m\ge n$. Tao gave an example to show that the bound $O_d(1)$ in this inequality cannot in general be taken to be $d$ \cite[Example 1.11]{tao.growth}. Nonetheless, in the first paper in this series \cite{tt} we showed that if one assumes the more stringent condition $\beta_\Gamma(n)\le n^d$ on a Cayley graph $\Gamma$ in place of $\beta_\Gamma(n)\le n^d\beta_\Gamma(1)$ then one can replace the bound $O_d(1)$ with a bound of $d$. This answered a question of Benjamini. As another corollary of \cref{thm:main}, we extend this result to vertex-transitive graphs, as follows.
\begin{corollary}\label{cor:persist.abs}
Given $d\in\N$ there exists $C=C_d>0$ such that if $\Gamma$ is a locally finite vertex-transitive graph satisfying
\begin{equation}\label{eq:persist.abs.hyp}
\beta_\Gamma(n)\le\frac{n^{d+1}}{C}
\end{equation}
for some $n\ge C$ then for every $m\ge n$ we have
\[
\beta_\Gamma(m)\le C\left(\frac{m}{n}\right)^d\beta_\Gamma(n).
\]
Moreover, if $\beta_\Gamma(n)\le n$ then given $x\in\Gamma$ we have $\Gamma=B_\Gamma(x,n)$.
\end{corollary}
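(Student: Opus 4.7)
The ``moreover'' part is immediate from the observation that in any connected graph $\Gamma$, following any geodesic from $x$ gives $\beta_\Gamma(n)\ge\min(n+1,|\Gamma|)$. Hence $\beta_\Gamma(n)\le n$ forces $|\Gamma|\le n$, so $\diam\Gamma<n$ and $\Gamma=B_\Gamma(x,n)$.

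For the main statement, the strategy is to reduce to the Cayley graph version established in our earlier work \cite{tt} via \cref{thm:main.geom}. First we convert the single-scale growth bound $\beta_\Gamma(n)\le n^{d+1}/C$ into a doubling hypothesis suitable for \cref{thm:main.geom}: setting $K_d=3^{d+1}$ and iterating ratios $\beta_\Gamma(3r)/\beta_\Gamma(r)$ down the dyadic sequence $r=n,n/3,n/9,\ldots$, a standard pigeonhole argument shows that for $C_d$ chosen sufficiently large (in particular $C_d\ge n_0(K_d)^{d+1}+1$), some scale $n'\in[n_0(K_d),n]$ must satisfy $\beta_\Gamma(3n')\le K_d\beta_\Gamma(n')$; otherwise, repeated failure of doubling down to scale $n_0(K_d)$ would yield $\beta_\Gamma(n)>K_d^{\log_3(n/n_0(K_d))}=(n/n_0(K_d))^{d+1}$, contradicting the hypothesis.

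With doubling at scale $n'$, \cref{thm:main.geom} provides a $(1,O_d(n'))$-quasi-isometry $\phi\colon\Gamma\to\Gamma'$, where $\Gamma'$ is a locally finite Cayley graph of a group with a nilpotent normal subgroup of rank, step, and index $O_d(1)$. The multiplicative parameter $1$, the bounded fibre cardinalities provided by the more detailed form \cref{thm:main}, and the uniformly polynomial growth on both sides (by \cref{cor:tao.growth} for $\Gamma$ and by Bass--Guivarc'h for $\Gamma'$) together allow the additive quasi-isometry error $O_d(n')$ to be absorbed into multiplicative constants of size $O_d(1)$, yielding $\beta_\Gamma(r)\asymp_d\beta_{\Gamma'}(r)$ for all $r\ge n$. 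In particular, transferring the hypothesis gives $\beta_{\Gamma'}(n)\le n^{d+1}/C'_d$ for a suitable $C'_d$; the Cayley graph version in \cite{tt} applied to $\Gamma'$ then yields $\beta_{\Gamma'}(m)\le C_d(m/n)^d\beta_{\Gamma'}(n)$ for every $m\ge n$; transferring this conclusion back via the same comparability completes the proof.

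The chief technical obstacle is the uniform growth comparability in the third paragraph: because the additive quasi-isometry error $O_d(n')$ may be of order $n$ itself, naive estimates via local multiplicities would introduce factors of order $n^{O_d(1)}$. This must be avoided by combining the bounded fibre sizes from the detailed \cref{thm:main} with the uniformly polynomial growth (at scales $\ge n'$) of $\Gamma$ and $\Gamma'$, so as to absorb all additive shifts into multiplicative constants of size $O_d(1)$.
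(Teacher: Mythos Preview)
Your treatment of the ``moreover'' clause is fine and matches the paper. The main argument, however, has a genuine gap: the claim that \cref{thm:main} provides ``bounded fibre cardinalities'' is false. Conclusion \ref{item:i} bounds the \emph{diameter} of each fibre $H(x)$ by $O_K(n')$, and conclusion \ref{item:v} bounds the \emph{stabiliser} $|(G_{\Gamma/H})_{H(e)}|$ by $O_K(1)$; neither bounds $|H(e)|$, which can be as large as $\beta_\Gamma(O_K(n'))$. Consequently your asserted comparability $\beta_\Gamma(r)\asymp_d\beta_{\Gamma'}(r)$ does not follow: by \cref{prop:red.to.cayley} the two sides differ by the factor $|(G_{\Gamma/H})_{H(e)}|/|H(e)|$, which is $\ll_K1$ but not $\gg_K1$ in general. (Think of $\Gamma$ a Cayley graph of $\Z\times F$ with $F$ a large finite group and $H$ acting as $F$; then $\Gamma'\cong\Cay(\Z,\{-1,0,1\})$ and $\beta_\Gamma/\beta_{\Gamma'}\sim|F|$.) Invoking \cref{cor:tao.growth} for polynomial growth of $\Gamma$ cannot repair this, since the discrepancy is multiplicative and independent of scale.

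The paper handles both difficulties cleanly. First, it runs the pigeonhole (\cref{lem:poly.pigeon}) on a narrower window to find $n'\in[\lfloor n^{1/2}\rfloor,n^{3/4}]$ with $\beta_\Gamma(3n')\ll_d\beta_\Gamma(n')$, so that the additive error $O_d(n')$ is at most $\lfloor n/2\rfloor$ once $n$ is large; this removes your ``$n'$ may be of order $n$'' obstacle outright. Second, it never asserts the pointwise comparability you claim, but uses only the one-sided bound $|S^m|\ll_d\beta_\Gamma(m+\lfloor n/2\rfloor)$ and the \emph{ratio} bound $|S^{r'}|/|S^r|\ll_d\beta_\Gamma(r'+\lfloor n/2\rfloor)/\beta_\Gamma(r)$ from \cref{prop:red.to.cayley}, in which the factor $|H(e)|$ cancels. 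With these in hand one applies the Cayley version (\cref{thm:persist.abs.cay}) at scale $\lceil n/2\rceil$ and reads the conclusion back via the ratio inequality. To fix your argument, either adopt this refined pigeonhole and ratio comparison, or---if you insist on $n'$ possibly of order $n$---drop the false pointwise comparability, use only the ratio form of \cref{prop:red.to.cayley}, and then invoke \cref{cor:tao.growth} solely to bound $\beta_\Gamma(r+O_d(n))/\beta_\Gamma(r)\ll_d1$ for $r\ge n$.
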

In another paper \cite{ttBK} we use \cref{cor:persist.abs} to prove a number of results about \emph{escape probabilities} of random walks in vertex-transitive graphs, in particular verifying two conjectures of Benjamini and Kozma \cite[Conjectures 4.1 \& 4.2]{bk}.

\begin{remark*}
The arguments of this paper are completely explicit and effective. However, we rely on two ineffective results from the literature on approximate groups, so we are unable to make the bounds in our results explicit at present. To be precise, the only ineffective quantities in Theorem \ref{thm:main.geom} are $n_0$ and the index of the nilpotent subgroup. The fact that the other quantities are effective plays a crucial role in a forthcoming paper on Gromov--Hausdorff approximation of balls of polynomial size in vertex-transitive graphs \cite{ttLie}.
The ineffective results we rely on are Breuillard, Green and Tao's celebrated classification finite approximate groups \cite{bgt}, which is the main ingredient in \cref{thm:bgt.gromov}, and Carolino's corresponding result for relatively compact approximate subgroups of locally compact groups, which we state below as \cref{thm:carolino}. The source of ineffectiveness is essentially the same in each of these two results, and if that could be eliminated then our results would in principle all be effective. \end{remark*}

\subsection*{Notation}Throughout this paper we make heavy use of the convention that if $X,Y$ are real quantities  and $z_1,\ldots,z_k$ are variables or constants then the expressions $X\ll_{z_1,\ldots,z_k}Y$ and $Y\gg_{z_1,\ldots,z_k}X$ each mean that there exists a constant $C>0$ depending only on $z_1,\ldots,z_k$ such that $X$ is always at most $CY$. Moreover, the notation $O_{z_1,\ldots,z_k}(Y)$ denotes a quantity that is at most a certain constant (depending on $z_1,\ldots,z_k$) multiple of $Y$, while $\Omega_{z_1,\ldots,z_k}(X)$ denotes a quantity that is at least a certain positive constant (depending on $z_1,\ldots,z_k$) multiple of $X$. Thus, for example, the meaning of the notation $X\le O(Y)$ is identical to the meaning of the notation $X\ll Y$. We also write $X\asymp_{z_1,\ldots,z_k}Y$ to mean that $X\ll_{z_1,\ldots,z_k}Y$ and $Y\ll_{z_1,\ldots,z_k}X$ both hold.

We use the notation $H<G$ to mean that $H$ is a subgroup of $G$, with equality permitted.

\subsection*{Acknowledgements} We thank Itai Benjamini, Emmanuel Breuillard, Persi Diaconis, Jonathan Hermon and Ariel Yadin for helpful conversations, and three anonymous referees for a number of comments on and corrections to an earlier version of this paper.

\section{Detailed statement of results and further corollaries}\label{sec:detail}
In this section we state in full our main results and some further corollaries. We also establish some notation and present the necessary definitions.

Throughout this paper, we use the term \emph{graph} to mean an undirected graph without loops or multiple edges. Given a graph $\Gamma$, we abuse notation slightly by also writing $\Gamma$ for the vertex set of $\Gamma$. For example, if $\Gamma$ is finite then $|\Gamma|$ is the number of vertices, and $x\in\Gamma$ means that $x$ is a vertex of $\Gamma$. Given $x,y\in\Gamma$ we write $x\sim y$ to mean that there is an edge between $x$ and $y$. We write $d_\Gamma(x,y)$ for the number of edges in the shortest path between $x$ and $y$, often dropping the subscript $\Gamma$ when it is clear from context. Note that this makes $\Gamma$ into a metric space; we call $d_\Gamma$ the \emph{graph metric} on $\Gamma$. Given $x\in\Gamma$, we write $\deg(x)$ for the degree of $x$. Given in addition $n\in\N$, we write $B_\Gamma(x,n)$ for the ball of radius $n$ centred at $x$; thus $B_\Gamma(x,n)=\{y\in\Gamma:d(x,y)\le n\}$.

We write $\Aut(\Gamma)$ for the group of automorphisms of the graph $\Gamma$, acting from the left. We say that a subgroup $G<\Aut(\Gamma)$ is \emph{transitive} if for every $x,y\in\Gamma$ there exists $g\in\Aut(\Gamma)$ such that $g(x)=y$. We say that $\Gamma$ is \emph{vertex transitive} if $\Aut(\Gamma)$ is transitive. Note that automorphisms of $\Gamma$ are the same as isometries of $\Gamma$ with respect to the metric $d_\Gamma$, and on certain occasions it will be more natural to refer to them as such.

In a vertex-transitive graph $\Gamma$, the degree of every vertex is the same; we write $\deg(\Gamma)$ for this degree. The cardinality $|B_\Gamma(x,n)|$ of the ball of radius $n$ centred at $x$ is also the same for every $x$; we write $\beta_\Gamma(n)$ for this cardinality.

When defining a Cayley graph $\Cay(G,S)$ we do not insist that $S$ be finite, but we will generally assume that $S$ contains the identity element $1$; this is largely for notational convenience, meaning, for example, that $B_{\Cay(G,S)}(1,n)=S^n$. We often abbreviate the distance $d_{\Cay(G,S)}$ as simply $d_S$. If $H\lhd G$ then, writing $\pi:G\to G/H$ for the quotient homomorphism, we also often abbreviate $\Cay(G/H,\pi(S))$ as $\Cay(G/H,S)$.

If $\Gamma$ is a vertex-transitive graph and $H<\Aut(\Gamma)$ is a subgroup then we define $\Gamma/H$ to be the quotient graph with vertices $\{H(x):x\in\Gamma\}$, and $H(x)\sim H(y)$ in $\Gamma/H$ if and only if there exists $x_0\in H(x)$ and $y_0\in H(y)$ such that $x_0\sim y_0$ in $\Gamma$. Note that $\Gamma/H$ is trivial if and only if $H$ is transitive. We call the sets $H(x)\subset\Gamma$ with $x\in\Gamma$ the \emph{fibres} of the projection $\Gamma\to\Gamma/H$.

If $G$ is another subgroup of $\Aut(\Gamma)$, we say that the quotient graph $\Gamma/H$ is \emph{invariant under the action of $G$ on $\Gamma$} if for every $g\in G$ and $x\in\Gamma$ there exists $y\in\Gamma$ such that $gH(x)=H(y)$. We will see in \cref{lem:G/H',lem:H.inv.for.G} that if $H$ is normalised by $G$ then $\Gamma/H$ is invariant under the action of $G$, and the action of $G$ on $\Gamma$ descends to an action of $G$ on $\Gamma/H$. We write $G_{\Gamma/H}$ for the image of $G$ in $\Aut(\Gamma/H)$ induced by this action; thus $G_{\Gamma/H}$ is the quotient of $G$ by the normal subgroup $\{g\in G:gH(x)=H(x)\text{ for every }x\in\Gamma\}$.

The following version of Trofimov's theorem essentially arises from an argument given by Woess.
\begin{theorem}[Trofimov \cite{trof}; Woess \cite{woess}]\label{thm:trof}
Let $\Gamma$ be a connected, locally finite vertex-transitive graph of polynomial growth, and let $e\in\Gamma$. Let $G<\Aut(\Gamma)$ be a transitive subgroup. Then there is a normal subgroup $H\lhd G$ such that
\begin{enumerate}[label=(\roman*)]
\item every fibre of the projection $\Gamma\to\Gamma/H$ is finite;
\item\label{item:woess.G/H} $G_{\Gamma/H}=G/H$;
\item $G_{\Gamma/H}$ is virtually nilpotent;
\item the set $S=\{g\in G_{\Gamma/H}:d_{\Gamma/H}(g(H(e)),H(e))\le1\}$ is a finite symmetric generating set for $G_{\Gamma/H}$; and
\item every vertex stabiliser of the action of $G_{\Gamma/H}$ on $\Gamma/H$ is finite;
\end{enumerate}
\end{theorem}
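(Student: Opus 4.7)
The plan is to exploit the topological-group structure of $G$, following the argument Woess gave to streamline Trofimov's original analysis. Equip $G$ with the topology of pointwise convergence on the discrete set $\Gamma$. Local finiteness of $\Gamma$ makes the vertex stabiliser $G_e$ compact and open, so $G$ is a locally compact, totally disconnected group; transitivity on the connected graph $\Gamma$ ensures that the compact symmetric neighbourhood $S_0 := \{g \in G : d_\Gamma(g(e),e) \le 1\}$ generates $G$; and the identity $|S_0^n|/|G_e| = \beta_\Gamma(n)$ transfers polynomial growth from $\Gamma$ to $G$ with respect to Haar measure.

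I would take $H$ to be the (unique) maximal compact normal subgroup of $G$. Its existence is the substantive structural input: it is essentially what Trofimov established directly for this setting, and was later absorbed into the general structure theory of compactly generated locally compact groups of polynomial growth (Losert). With this choice, (i) is immediate, since each $H$-orbit is the continuous image of the compact group $H$ in the discrete space $\Gamma$, hence finite. For (ii), let $N \lhd G$ denote the kernel of the action of $G$ on $\Gamma/H$. Then $H \le N$; conversely, any $g \in N$ fixes every $H$-orbit setwise, so $N$-orbits are contained in $H$-orbits and are therefore finite. In particular $[N : N \cap G_e]$ is finite, and as $N \cap G_e$ is closed in the compact $G_e$, the group $N$ is itself compact (and normal). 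Maximality of $H$ then forces $N = H$, giving $G_{\Gamma/H} = G/H$.

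The remaining three conclusions all flow once one knows that $G/H$ is discrete — the second piece of structural input, and the main technical content of Trofimov's theorem. Granting this, (iv) is straightforward: $S$ is the image in $G/H$ of the compact set $H \cdot S_0 \cdot H$ (up to absorbing the stabiliser of $H(e)$), hence a finite subset of the discrete group $G/H$, and it generates because $G/H$ acts transitively on the connected quotient graph $\Gamma/H$. This realises $G/H$ as a finitely generated discrete group whose growth is controlled by that of $\Gamma/H$, which is polynomial as a quotient of $\Gamma$; Gromov's theorem then delivers (iii). Finally, the stabiliser of $H(e)$ in $G/H$ is the image of the compact set $H G_e$, i.e.\ of $G_e$, so it is a compact subgroup of the discrete group $G/H$ and therefore finite, giving (v).

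The hard part is the pair of structural facts about $G$: the existence of a largest compact normal subgroup, and the discreteness of the quotient modulo it. These are Trofimov's principal contribution, reproved by Woess in the more transparent topological-group language used above; everything else in the proof is bookkeeping around the compact-open structure combined with an appeal to Gromov's theorem at the end.
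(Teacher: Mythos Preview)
Your sketch follows essentially the same Woess-style argument the paper describes (and carries out in finitary form in the proof of \cref{thm:prelim}): view $G$ as a locally compact totally disconnected group, invoke the structural input (Losert/Trofimov) to obtain a compact normal subgroup with good quotient, deduce discreteness of the quotient, and finish with Gromov. Two points are worth flagging.

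First, a small gap: the statement is for an \emph{arbitrary} transitive subgroup $G<\Aut(\Gamma)$, not a closed one. Your opening move, that $G_e$ is compact and hence $G$ is locally compact, requires $G$ to be closed in $\Aut(\Gamma)$. The fix is the standard one (noted by Woess and used in the paper's proof of \cref{thm:main}): run the argument for the closure $\overline G$ to produce $H'\lhd\overline G$, then set $H=H'\cap G$ and use that $H'$ is open to see $H$ is dense in $H'$, so $H$-orbits coincide with $H'$-orbits and all the conclusions descend.

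Second, a packaging difference rather than a mathematical one: you take $H$ to be the \emph{maximal} compact normal subgroup and use maximality to force $N=H$ in conclusion (ii). The paper (following Woess) instead invokes Losert's theorem to obtain a compact normal $H_0$ with $G/H_0$ a Lie group, and then gets discreteness concretely: $\Aut(\Gamma/H_0)$ is totally disconnected (\cref{prop:tot.disc}), so the Lie group $G_{\Gamma/H_0}$ is a totally disconnected Lie group, hence discrete. Conclusion (ii) is then arranged by replacing $H_0$ with the kernel of the induced action (\cref{lem:make.H.kernel}) rather than by a maximality argument. Your maximality trick for (ii) is a nice alternative, but note that your ``second structural input'' (discreteness of $G/H$) is not really a separate black box: it is exactly the consequence of Losert's theorem plus the totally-disconnected-Lie-group observation, and spelling this out is the cleanest way to justify it.
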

\begin{remark}\label{rem:trof.QI}
It follows from \cref{lem:quotient.QI,lem:aut.QI}, below, that in the setting of \cref{thm:trof} there exist maps
\begin{equation}\label{eq:QI.initial}
\begin{CD}
@.\Gamma \\
@.@VV(1,?)V    \\
\Cay(G_{\Gamma/H},S) @>(1,1)>> \Gamma/H\\
\end{CD}
\end{equation}
that are quasi-isometries with parameters as indicated. We will see in \cref{lem:QI.elem} that the quasi-isometries represented on the diagram \eqref{eq:QI.initial} can be inverted and composed to produce the $(1,?)$-quasi-isometry $\Gamma\to\Cay(G_{\Gamma/H},S)$ of \cref{thm:trof.geom}. 
\end{remark}

Our main result is the following refinement of \cref{thm:trof}.

\begin{theorem}[main result, detailed form]\label{thm:main}
For every $K\ge1$ there exists $n_0=n_0(K)$ such that the following holds. Let $\Gamma$ be a connected, locally finite vertex-transitive graph with a distinguished vertex $e$, and suppose that there exists $n\ge n_0$ such that $\beta_\Gamma(3n)\le K\beta_\Gamma(n)$.
Let $G<\Aut(\Gamma)$ be a transitive subgroup. Then there is a normal subgroup $H\lhd G$ such that
\begin{enumerate}[label=(\roman*)]
\item\label{item:i} every fibre of the projection $\Gamma\to\Gamma/H$ has diameter at most $O_K(n)$;
\item\label{item:ii} $G_{\Gamma/H}=G/H$;
\item\label{item:iii} $G_{\Gamma/H}$ has a nilpotent normal subgroup of rank, step and index at most $O_K(1)$;
\item\label{item:iv} the set $S=\{g\in G_{\Gamma/H}:d_{\Gamma/H}(g(H(e)),H(e))\le1\}$ is a finite symmetric generating set for $G_{\Gamma/H}$;
\item\label{item:v} every vertex stabiliser of the action of $G_{\Gamma/H}$ on $\Gamma/H$ has cardinality $O_K(1)$; and
\item\label{item:vi} there is a $(1,O_K(n))$-quasi-isometry $\Gamma\to\Cay(G_{\Gamma/H},S)$.
\end{enumerate}
\end{theorem}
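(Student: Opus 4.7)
The plan is to deduce the theorem from \cref{thm:trof} and the Breuillard--Green--Tao/Carolino classification of approximate subgroups (\cref{thm:carolino}), via a finitary approximate-group argument that parallels the derivation of \cref{thm:bgt.gromov}.

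First I would view the transitive subgroup $G<\Aut(\Gamma)$ as a totally disconnected locally compact group, topologised by pointwise convergence on $\Gamma$, so that the vertex stabiliser $\Sigma=\Stab_G(e)$ is a compact open subgroup. Let $A=\{g\in G:d_\Gamma(g(e),e)\le n\}$, a precompact open symmetric subset of $G$ containing $\Sigma$. Writing $\mu$ for the left Haar measure on $G$ normalised so that $\mu(\Sigma)=1$, the identity $\mu(A^k)=|A^k(e)|$ together with the inclusion $A^k(e)\subset B_\Gamma(e,kn)$ translates the hypothesis $\beta_\Gamma(3n)\le K\beta_\Gamma(n)$ into a tripling estimate $\mu(A^3)\le K\mu(A)$. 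Standard non-commutative Pl\"unnecke--Ruzsa estimates in the locally-compact setting then realise $A$ (or a bounded power $A^{O(1)}$) as a precompact open $K^{O(1)}$-approximate subgroup of $G$.

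Next I would apply \cref{thm:carolino} to this precompact approximate subgroup to obtain a compact open subgroup $H\lhd\langle A\rangle$ contained in a bounded power $A^{O_K(1)}$, with $\langle A\rangle/H$ a discrete virtually nilpotent group admitting a nilpotent normal subgroup of rank, step and index at most $O_K(1)$. Because $\Gamma$ is connected and locally finite, a short argument using transitivity (decomposing any $g\in G$ along a geodesic from $e$ to $g(e)$ into a product of unit-displacement isometries) shows that $\langle A\rangle=G$ whenever $n\ge1$, and so $H$ is normal in $G$ itself. This $H$ will be the subgroup in the statement.

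It then remains to verify properties (i)--(vi). The inclusion $H\subset A^{O_K(1)}$ yields $H(e)\subset B_\Gamma(e,O_K(n))$, giving (i); (iii) is a direct restatement of the output of \cref{thm:carolino}; (ii) follows because normality of $H$ makes the $G$-action on $\Gamma$ descend with kernel exactly $H$; (iv) follows from the connectedness of $\Gamma/H$ and the transitivity of $G/H$; (v) reduces to bounding the index $[\Sigma:\Sigma\cap H]$, which is $O_K(1)$ since $H$ is open and contained in $A^{O_K(1)}$; and (vi) is assembled from (i), (iv), and the quasi-isometry lemmas surrounding \cref{rem:trof.QI}. The main obstacle is the application of \cref{thm:carolino}: one must extract a subgroup $H$ simultaneously normal in $\langle A\rangle$, compact and open, contained in a bounded power of $A$, and with quotient of bounded virtual-nilpotent complexity. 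This step is also the source of the ineffectiveness in $n_0$ and in the index bound, inherited from the Breuillard--Green--Tao and Carolino classifications.
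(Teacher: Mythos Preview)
Your outline captures the broad shape of the argument (pass to a locally compact setting, convert the ball-growth hypothesis into an approximate-group statement, invoke Carolino/BGT), but there is a genuine gap in your verification of conclusion~\ref{item:v}, and this is precisely the step the paper singles out as requiring an extra idea.

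You write that ``(v) reduces to bounding the index $[\Sigma:\Sigma\cap H]$, which is $O_K(1)$ since $H$ is open and contained in $A^{O_K(1)}$''. The reduction is correct, but the justification is not. Openness of $H$ only gives that $\Sigma\cap H$ is open in the compact group $\Sigma$, hence of \emph{finite} index---not of index bounded in terms of $K$. The inclusion $H\subset A^{O_K(1)}$ is an upper bound on $H$ and says nothing about how much of $\Sigma$ lies inside $H$; it is in the wrong direction to control $[\Sigma:\Sigma\cap H]$. Indeed, \cref{thm:carolino} as stated yields only a compact (not open) $H$ with $L/H$ a Lie group of bounded dimension; the paper then argues separately that the induced action has discrete image, applies \cref{thm:bgt.grom.ag} to that discrete quotient, and even after all this only obtains finite stabilisers (this is \cref{thm:prelim}). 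The paper explicitly remarks that \cref{thm:prelim} does \emph{not} bound the stabiliser size, and that obtaining the $O_K(1)$ bound is the crucial additional content of \cref{thm:main}. The missing ingredient is \cref{prop:normal.closure} (ultimately \cite[Proposition~7.3]{nilp.frei} on nilpotent approximate groups): one enlarges $H$ by adjoining the normal closure $(\overline G_e\cap N)^{\overline G}$, where $N$ is the finite-index nilpotent subgroup, and the nilpotent approximate-group lemma ensures this normal closure stays inside $S^{O_K(n)}$. After this enlargement the stabiliser becomes a homomorphic image of $\overline G_e/(\overline G_e\cap N)$, which embeds in $\overline G/N$ and hence has size $O_K(1)$.

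Two smaller points: you should work with the closure $\overline G$ rather than $G$ itself, since $G$ need not be closed and hence need not be locally compact (the paper does this and intersects back to $G$ at the end); and for \ref{item:ii} you need to replace $H$ by the actual kernel of the induced action on $\Gamma/H$, as in \cref{lem:make.H.kernel}, since a priori that kernel could strictly contain $H$.
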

The \emph{diameter} of a subset $A$ of a metric space is given by $\sup_{x,y\in A}d(x,y)$.
\begin{remarks*}\mbox{}
\begin{enumerate}[label=(\arabic*)]
\item It is not entirely surprising that there should be a finitary refinement of \cref{thm:trof} in the direction of \cref{thm:main}, given some advances that have been made since Woess's paper. Indeed, in \cref{thm:prelim} we obtain a partial version of \cref{thm:main} essentally by replacing two ingredients in Woess's proof with more advanced results that were not available to Woess at the time. This alone is not good enough for our applications, however. An important deficiency of \cref{thm:prelim} compared to \cref{thm:main} is that \cref{thm:prelim} does not give a bound on the size of the vertex stabilisers of the action of $G_{\Gamma/H}$ on $\Gamma/H$, a conclusion that is crucial for our proof of \cref{cor:persist.abs}, below, for example.
\item One may check that if the vertex stabilisers of the action of a transitive group $G$ on a graph $\Gamma$ are trivial then $\Gamma$ is isomorphic to the Cayley graph of $G$ with respect to the generating set $\{g\in G:d(g,e)\le1\}$. Conclusion \ref{item:v} of \cref{thm:main} can therefore be interpreted as saying that $\Gamma/H$ is `close to' $\Cay(G_{\Gamma/H},S)$.
\end{enumerate}
\end{remarks*}

\bigskip\noindent We now present some applications of \cref{thm:main} to vertex-transitive graphs of polynomial growth. We start with a corollary confirming that \cref{thm:main} is a refinement of \cref{thm:trof}.

\begin{corollary}\label{cor:trof}
For all $d\ge0$ and $\lambda\in(0,1)$ there exists $n_0=n_0(d,\lambda)\ge1$ such that the following holds. Let $\Gamma$ be a connected, locally finite vertex-transitive graph with a distinguished vertex $e$, and suppose that there exists $n\ge n_0$ such that
\begin{equation}\label{eq:trof.one.scale}
\beta_\Gamma(n)\le n^d\beta_\Gamma(1).
\end{equation}
Let $G<\Aut(\Gamma)$ be a transitive subgroup. Then there is a normal subgroup $H\lhd G$ such that
\begin{enumerate}[label=(\roman*)]
\item\label{item:c.i} every fibre of the projection $\Gamma\to\Gamma/H$ has diameter at most $n^\lambda$;
\item\label{item:c.ii} $G_{\Gamma/H}=G/H$;
\item\label{item:c.iii} $G_{\Gamma/H}$ has a nilpotent normal subgroup of rank, step and index at most $O_{d,\lambda}(1)$;
\item\label{item:c.iv} the set $S=\{g\in G_{\Gamma/H}:d_{\Gamma/H}(g(H(e)),H(e))\le1\}$ is a finite symmetric generating set for $G_{\Gamma/H}$;
\item\label{item:c.v} every vertex stabiliser of the action of $G_{\Gamma/H}$ on $\Gamma/H$ has cardinality $O_{d,\lambda}(1)$; and
\item\label{item:c.vi} there is a $(1,n^{\lambda})$-quasi-isometry $\Gamma\to\Cay(G_{\Gamma/H},S)$.
\end{enumerate}
\end{corollary}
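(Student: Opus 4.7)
The plan is to deduce \cref{cor:trof} from \cref{thm:main} by a pigeonhole argument that converts the one-scale polynomial bound $\beta_\Gamma(n)\le n^d\beta_\Gamma(1)$ into a tripling inequality $\beta_\Gamma(3m)\le K\beta_\Gamma(m)$ at a suitably chosen intermediate scale $m$, to which \cref{thm:main} can then be applied. Fix $K=K(d,\lambda)$ large enough that $\log_3 K > 2d/\lambda$ (e.g.\ $K=3^{\lceil 2d/\lambda\rceil+1}$), and set $k_0=\lceil\log_3 n_0(K)\rceil$, where $n_0(K)$ is the threshold produced by \cref{thm:main}. If the tripling inequality $\beta_\Gamma(3^{k+1})\le K\beta_\Gamma(3^k)$ were to fail for every integer $k\in[k_0,k_1]$, then telescoping and the monotonicity of $\beta_\Gamma$ would give
\[
K^{k_1-k_0+1}\beta_\Gamma(3^{k_0})<\beta_\Gamma(3^{k_1+1})\le\beta_\Gamma(n)\le n^d\beta_\Gamma(1)\le n^d\beta_\Gamma(3^{k_0}),
\]
forcing $k_1-k_0+1<d\log_K n$. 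Taking $k_1=k_0+\lceil d\log_K n\rceil$ (and noting $3^{k_1+1}\le n$ for $n$ large) therefore produces some $k\in[k_0,k_1]$ with $\beta_\Gamma(3\cdot 3^k)\le K\beta_\Gamma(3^k)$. By construction, $m:=3^k$ satisfies $n_0(K)\le m\le 3^{k_1}=O_{d,\lambda}(n^{\lambda/2})$.

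With this $m$ I would apply \cref{thm:main} to obtain the normal subgroup $H\lhd G$. Conclusions \ref{item:c.ii}, \ref{item:c.iii}, \ref{item:c.iv} and \ref{item:c.v} are immediate, since every $O_K(1)$ bound from \cref{thm:main} is automatically $O_{d,\lambda}(1)$ because $K$ depends only on $d$ and $\lambda$. For the fibre-diameter bound \ref{item:c.i} and the quasi-isometry parameter \ref{item:c.vi}, \cref{thm:main} furnishes bounds of size $O_K(m)$; since $m=O_{d,\lambda}(n^{\lambda/2})$, these are at most $n^\lambda$ once $n\ge n_0(d,\lambda)$ is chosen large enough that $n^{\lambda/2}$ swamps all of the implicit $(K,d,\lambda)$-constants.

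The only real technical point is bookkeeping: one must pick $K$ large enough relative to $d/\lambda$ that the pigeonhole succeeds inside a window whose right endpoint is comfortably below $n^{\lambda/2}$, and then take $n_0(d,\lambda)$ large enough to absorb all constants depending on $K$ and on $n_0(K)$ (notably the implied constants in the $O_K(\cdot)$ outputs of \cref{thm:main}) into the exponent gap between $\lambda/2$ and $\lambda$. No genuinely new ingredient beyond the pigeonhole trick is needed; this is essentially the content of the polynomial-growth pigeonhole lemma referenced as \cref{lem:poly.pigeon} in the introduction.
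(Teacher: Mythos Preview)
Your proposal is correct and follows essentially the same approach as the paper: the paper invokes \cref{lem:poly.pigeon} (with $\alpha=\lambda/4$, $\beta=\lambda/2$) to produce $m\le n^{\lambda/2}$ with $\beta_\Gamma(3m)\le K\beta_\Gamma(m)$ for $K=K(d,\lambda)$, then applies \cref{thm:main} at scale $m$, absorbing the resulting $O_K(m)=O_{d,\lambda}(n^{\lambda/2})$ bounds into $n^\lambda$ for large $n$. Your inline pigeonhole over powers of $3$ is exactly a special case of \cref{lem:poly.pigeon}, as you yourself note.
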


In the case that $\Gamma$ is finite, a natural value of $n$ at which to seek to apply \cref{cor:trof} is the diameter of $\Gamma$, which we write $\diam(\Gamma)$. Note that $\diam(\Gamma)=\min\{n:B_\Gamma(e,n)=\Gamma\}$. For $n=\diam(\Gamma)$, the hypothesis \eqref{eq:trof.one.scale} translates to the large-diameter condition
\begin{equation}\label{eq:large.diam}
\diam(\Gamma)\ge\left(\frac{|\Gamma|}{\beta_\Gamma(1)}\right)^\delta,
\end{equation}
with $\delta=\frac{1}{d}$. Breuillard and the second author \cite{bt} have studied Cayley graphs satisfying \eqref{eq:large.diam}, calling such Cayley graphs \emph{$\delta$-almost flat}. For brevity, when writing the diameter of a Cayley graph $\Cay(G,S)$ we write $\diam_S(G)$ in place of $\diam(\Cay(G,S))$.

\cref{thm:bgt.gromov} shows that an almost-flat Cayley graph possesses a large virtually nilpotent quotient. One of Breuillard and the second author's results shows that one can refine this further to a large virtually abelian quotient \cite[Theorem 4.1 (2)]{bt}. They also show that the finite-index abelian subgroup of this quotient in turn admits a cyclic quotient of diameter comparable to that of the original Cayley graph \cite[Theorem 4.1 (3)]{bt}. Our next corollaries extend these results to vertex-transitive graphs. 

\begin{corollary}\label{cor:trof.finite}
For every $\delta>0$ and $\lambda>0$ there exists $n_0=n_0(\delta,\lambda)\ge1$ such that the following holds. Let $\Gamma$ be a connected, finite vertex-transitive graph with a distinguished vertex $e$, and suppose that $\diam(\Gamma)\ge n_0$ and
\begin{equation}\label{eq:trof.finite.hyp}
\diam(\Gamma)\ge\left(\frac{|\Gamma|}{\beta_\Gamma(1)}\right)^\delta.
\end{equation}
Let $G<\Aut(\Gamma)$ be a transitive subgroup. Then there is a normal subgroup $H\lhd G$ such that
\begin{enumerate}[label=(\roman*)]
\item\label{item:c.i.fin} every fibre of the projection $\Gamma\to\Gamma/H$ has diameter at most $\diam(\Gamma)^{\frac{1}{2}+\lambda}$;
\item\label{item:c.ii.fin} $G_{\Gamma/H}=G/H$;
\item\label{item:c.iii.fin} $G_{\Gamma/H}$ has an abelian subgroup of rank and index at most $O_{\delta,\lambda}(1)$;
\item\label{item:c.iv.fin} the set $S=\{g\in G_{\Gamma/H}:d_{\Gamma/H}(g(H(e)),H(e))\le1\}$ is a symmetric generating set for $G_{\Gamma/H}$;
\item\label{item:c.v.fin} every vertex stabiliser of the action of $G_{\Gamma/H}$ on $\Gamma/H$ has cardinality $O_\delta(1)$; and
\item\label{item:c.vi.fin} there is a $(1,\diam(\Gamma)^{\frac{1}{2}+\lambda})$-quasi-isometry $\Gamma\to\Cay(G_{\Gamma/H},S)$.
\end{enumerate}
\end{corollary}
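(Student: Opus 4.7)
The plan is to bootstrap \cref{cor:trof} (which produces a virtually nilpotent quotient) using \cite[Theorem 4.1(2)]{bt} of Breuillard and the second author (which, for almost-flat \emph{Cayley} graphs, upgrades this to a virtually abelian quotient). Set $n := \diam(\Gamma)$; the hypothesis \eqref{eq:trof.finite.hyp} rewrites as $\beta_\Gamma(n) = |\Gamma| \le n^{1/\delta}\beta_\Gamma(1)$, which is precisely hypothesis \eqref{eq:trof.one.scale} of \cref{cor:trof} with $d = 1/\delta$. Applying \cref{cor:trof} with a small auxiliary parameter $\lambda_0 = \lambda_0(\lambda)$ (to be fixed at the end) yields a normal subgroup $H_0 \lhd G$ whose fibres have diameter at most $n^{\lambda_0}$, a virtually nilpotent quotient $G_0 := G/H_0$ of bounded rank, step and index, a canonical generating set $S_0$ as in (iv), vertex stabilisers of size $O_\delta(1)$, and a $(1, n^{\lambda_0})$-quasi-isometry $\phi \colon \Gamma \to \Cay(G_0, S_0)$.

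The next step is to check that $\Cay(G_0, S_0)$ is itself $\delta'$-almost flat for some $\delta' = \delta'(\delta) > 0$. Via $\phi$ one has $\diam_{S_0}(G_0) \asymp_\delta n$, while the fibre-size and vertex-stabiliser bounds from \cref{cor:trof}(i)(v) give $|G_0|/|S_0| \ll_\delta |\Gamma|/\beta_\Gamma(1)$; combining these yields $\diam_{S_0}(G_0) \ge (|G_0|/|S_0|)^{\delta'}$. One then invokes \cite[Theorem 4.1(2)]{bt} with any prescribed $\lambda_1 > 0$ to obtain a normal subgroup $H_1 \lhd G_0$ with $G_0/H_1$ virtually abelian of rank and index $O_{\delta,\lambda}(1)$, and with every $H_1$-coset of $S_0$-diameter at most $n^{1/2 + \lambda_1}$. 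Taking $H \lhd G$ to be the preimage of $H_1$ under $G \to G_0$, the algebraic conclusions (ii)--(v) fall out from the corresponding conclusions of \cref{cor:trof} and \cite[Theorem 4.1(2)]{bt}: indeed $G/H = G_0/H_1$, rank and index bounds multiply, and the vertex stabilisers of $G/H$ acting on $\Gamma/H$ sit in an extension with those of $G_0$ acting on $\Gamma/H_0$, which have size $O_\delta(1)$.

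The principal technical step, and the main obstacle, is to convert the $S_0$-diameter bound on $H_1$-cosets in $\Cay(G_0, S_0)$ into the $\Gamma$-diameter bound (i) on $H$-cosets in $\Gamma$. Writing $Hv = \bigcup_{h_1 \in H_1} h_1 \cdot (H_0 v)$ (with each $h_1$ lifted arbitrarily to $G$), any two points of $Hv$ lie at $\Gamma$-distance at most $2 n^{\lambda_0} + d_\Gamma(h_1 v, h_1' v)$ for suitable $h_1, h_1' \in H_1$; the last term is then bounded, via $\phi$, by the $S_0$-diameter of the $H_1$-coset in $\Cay(G_0, S_0)$ containing $\phi(h_1 v)$ and $\phi(h_1' v)$, at the cost of an extra $O(n^{\lambda_0})$. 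This gives fibre diameter at most $n^{1/2 + \lambda_1} + O(n^{\lambda_0})$, which is $\le n^{1/2 + \lambda}$ once $\lambda_0, \lambda_1$ are chosen sufficiently small and $n_0$ sufficiently large. Conclusion (vi) then follows by composing $\phi$ with the natural $(1, n^{1/2 + \lambda_1})$-quasi-isometry $\Cay(G_0, S_0) \to \Cay(G/H, S)$ collapsing $H_1$-cosets, yielding the required $(1, \diam(\Gamma)^{1/2 + \lambda})$-quasi-isometry $\Gamma \to \Cay(G/H, S)$.
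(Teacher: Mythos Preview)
Your approach is essentially the same as the paper's: apply \cref{cor:trof} to get $H_0$ and reduce to the Cayley graph $\Cay(G/H_0,S_0)$, verify that this Cayley graph is almost flat, apply \cite[Theorem 4.1(2)]{bt} to obtain $H_1$, and pull back. The paper phrases the diameter comparisons algebraically via \cref{lem:cay-ab.gen.set,lem:image.of.S,lem:ball.preimage} rather than via the quasi-isometry $\phi$, but this is cosmetic.

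Two small points to tighten. First, conclusion \ref{item:c.v.fin} asks for stabilisers of size $O_\delta(1)$, not $O_{\delta,\lambda}(1)$; since \cref{cor:trof}\ref{item:c.v} gives only $O_{d,\lambda_0}(1)$, you should fix $\lambda_0$ to an absolute constant (the paper uses $\lambda_0=\tfrac12$) rather than letting it depend on $\lambda$---this still suffices, since $n^{1/2}\le n^{1/2+\lambda}$, and then the stabiliser bound depends only on $\delta$. Second, taking $H$ to be the literal preimage of $H_1$ gives $G/H\cong G_0/H_1$ abstractly but does not immediately give \ref{item:c.ii.fin}, i.e.\ that $H$ equals the kernel of the $G$-action on $\Gamma/H$; you need to pass to that kernel via \cref{lem:make.H.kernel}, which does not affect the diameter bound.
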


In the next corollary, if $\Gamma$ is a vertex-transitive graph, $H<\Aut(\Gamma)$ is a subgroup, and $X\subset\Gamma$ is a union of fibres of the projection $\Gamma\to\Gamma/H$ then we define $X/H$ to be the quotient of $X$ (with the subspace metric induced from $\Gamma$) by the equivalence relation with equivalence classes $H(x)$ for $x\in X$. Explicitly, $X/H$ is the metric space with points $\{H(x):x\in X\}$ and distance $d_{X/\Gamma}$ defined via
\begin{equation}\label{eq:quot.metric}
d_{X/H}(H(x),H(y))=\min\left\{d_\Gamma(x_1,y_1)+\cdots+d_\Gamma(x_m,y_m):\begin{array}{c}m\in\N\\x_1=x\\y_m=y
\\x_i,y_i\in X\text{ for every }i\\x_{i+1}\in H(y_i)\text{ for every }i\end{array}\right\}.
\end{equation}
Note that if $X=\Gamma$ then this metric agrees with the graph metric on $\Gamma/H$ defined earlier.

\begin{corollary}\label{cor:trof.finite.cyc}
For every $\delta>0$ there exists $n_0=n_0(\delta)\ge1$ such that if $\Gamma$ is a connected, finite vertex-transitive graph with a distinguished vertex $e$ satisfying $\diam(\Gamma)\ge n_0$ and \eqref{eq:trof.finite.hyp}, and if $G<\Aut(\Gamma)$ is a transitive subgroup, then there exist subgroups $U\lhd G'\lhd G$ and a finite symmetric generating set $S'$ for $G'/U$ containing the identity such that
\begin{enumerate}[label=(\roman*)]
\item \label{item:fin.ind}$G'$ has index $O_\delta(1)$ in $G$;
\item \label{item:cyclic}$G'/U$ is cyclic;
\item \label{item:diam}$\diam_{S'}(G'/U)\gg_\delta\diam(\Gamma)$;
\item \label{item:small.stabs}every vertex stabiliser of the action of $G'/U$ on $G'(e)/U$ has size $O_\delta(1)$; and
\item \label{item:biLip} the map $\ph:\Cay(G'/U,S')\to G'(e)/U$ defined by $\ph(gU)=U(g(e))$ is an $(O_\delta(1),O_\delta(1))$-quasi-isometry.
\end{enumerate}
\end{corollary}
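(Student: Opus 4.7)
The plan is to combine \cref{cor:trof.finite} with Breuillard and Tointon's cyclic-quotient theorem for almost-flat Cayley graphs \cite[Theorem 4.1(3)]{bt}.

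First, I apply \cref{cor:trof.finite} with parameter $\lambda=\tfrac14$, say, to produce a normal subgroup $H\lhd G$ such that $G_{\Gamma/H}=G/H$ is virtually abelian of rank $O_\delta(1)$, the vertex stabilisers of the $G_{\Gamma/H}$-action on $\Gamma/H$ have size $O_\delta(1)$, and there is a $(1,\diam(\Gamma)^{3/4})$-quasi-isometry $\Gamma\to\Cay(G_{\Gamma/H},S)$. The stabiliser bound gives $|G_{\Gamma/H}|\le O_\delta(|\Gamma|)$ and $|S|\le O_\delta(\beta_\Gamma(1))$, so the hypothesis \eqref{eq:trof.finite.hyp} on $\Gamma$ transfers to an almost-flatness condition $\diam_S(G_{\Gamma/H})\ge(|G_{\Gamma/H}|/|S|)^{\delta'}$ for $\Cay(G_{\Gamma/H},S)$, with some $\delta'=\delta'(\delta)>0$.

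Next, I apply \cite[Theorem 4.1(3)]{bt} to $\Cay(G_{\Gamma/H},S)$, which produces normal subgroups $\bar U\lhd\bar{G'}\lhd G_{\Gamma/H}$ with $[G_{\Gamma/H}:\bar{G'}]=O_\delta(1)$, the quotient $\bar{G'}/\bar U$ cyclic, and an inherited generating set $\bar{S'}$ satisfying $\diam_{\bar{S'}}(\bar{G'}/\bar U)\gg_\delta\diam_S(G_{\Gamma/H})\asymp_\delta\diam(\Gamma)$. Lifting through the projection $\pi\colon G\to G/H=G_{\Gamma/H}$, I set $G'=\pi^{-1}(\bar{G'})$ and $U=\pi^{-1}(\bar U)$; the normality relations lift to $G'\lhd G$ and $U\lhd G'$, and the canonical isomorphism $G'/U\cong\bar{G'}/\bar U$ inherits cyclicity, the generating set $S'$, and the diameter lower bound, delivering items \ref{item:fin.ind}--\ref{item:diam} at once.

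For items \ref{item:small.stabs} and \ref{item:biLip}, I use that the stabilisers of the $G_{\Gamma/H}$-action on $\Gamma/H$ have size $O_\delta(1)$ together with the bounded size of the $\bar U$-action on individual $H$-orbits (controlled via the fibre-diameter bound \ref{item:c.i.fin}) to control the stabilisers of $G'/U$ acting on $G'(e)/U$. The Lipschitz upper bound for $\varphi$ follows because each $S'$-step moves $U(e)$ a bounded distance in $G'(e)/U$ by construction of $S'$; the lower bound comes from combining the quasi-isometry $\Gamma\to\Cay(G_{\Gamma/H},S)$ supplied by \ref{item:c.vi.fin} with the diameter comparison for $\bar{G'}/\bar U$ derived in the previous step. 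The main obstacle is precisely this last step: tracking the quasi-isometry parameters cleanly through the chain of quotients $\Gamma\to\Gamma/H\to G'(e)/U$ while keeping all constants $O_\delta(1)$, which forces us to exploit the bounded index of $\bar{G'}$ in $G_{\Gamma/H}$ and the bounded size of $\bar U/H$ simultaneously.
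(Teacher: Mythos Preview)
Your overall strategy---pass to the Cayley graph $\Cay(G/H,S)$, apply \cite[Theorem 4.1(3)]{bt}, and pull back---is exactly what the paper does (the paper applies \cref{cor:trof} rather than \cref{cor:trof.finite}, but that is cosmetic). Conclusions \ref{item:fin.ind}--\ref{item:diam} and \ref{item:small.stabs} go through essentially as you outline; in particular, \ref{item:small.stabs} follows simply from \cref{lem:image.of.S}, since the stabiliser of $U(e)$ in $G'/U$ is a homomorphic image of $G'_e$, whose image in $G/H$ already has size $O_\delta(1)$.

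There is, however, a genuine gap in your treatment of \ref{item:biLip}. You invoke two things that do not work. First, you refer to ``the bounded size of $\bar U/H$'', but there is no reason for $\bar U/H$ to be bounded: the cyclic quotient $\bar{G'}/\bar U$ can be much smaller in cardinality than $\bar{G'}$, so $\bar U$ may be large. Second, you try to obtain the lower Lipschitz bound from the $(1,\diam(\Gamma)^{3/4})$-quasi-isometry of \cref{cor:trof.finite}\ref{item:c.vi.fin}, but that additive constant is not $O_\delta(1)$, so it cannot yield an $(O_\delta(1),O_\delta(1))$-quasi-isometry; and ``the diameter comparison'' is a single global inequality, not a pointwise metric comparison.

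What you are missing is the \emph{bi-Lipschitz} information contained in \cite[Theorem 4.1(3)]{bt} (stated in the paper as \cref{thm:bt}): the generating set $S_1'$ it produces for $\pi(G')$ satisfies both $S_1'\subset S_1^{O_\delta(1)}$ and $d_{S_1'}\le d_{S_1}$ on $\pi(G')$, so that $d_{S_1}\asymp_\delta d_{S_1'}$ there. Combined with \cref{lem:image.of.S} (which identifies $d_{G'(e)/U}$ with a minimum of $d_{\Gamma/H_0}$-distances over $U$) and \cref{lem:cay-ab.gen.set} (which identifies $d_{\Gamma/H_0}$ with $d_{S_1}$), this bi-Lipschitz relation is exactly what gives \ref{item:biLip}. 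Once you make this explicit, your argument is complete and matches the paper's.
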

Conclusion \ref{item:fin.ind} of \cref{cor:trof.finite.cyc} (in conjunction with \cref{lem:cay-ab.gen.set,lem:cosets.increasing}, below) implies that $\Gamma$ is $(1,O_\delta(1))$-quasi-isometric to $G'(e)$, whilst conclusion \ref{item:biLip} shows that $G'(e)/U$ is $(O_\delta(1),O_\delta(1))$-quasi-isometric to $\Cay(G/U,S')$. In light of conclusions \ref{item:cyclic} and \ref{item:diam}, \cref{cor:trof.finite.cyc} can therefore be thought of as saying that a large diameter of a finite vertex-transitive graph is always explained, ``modulo quasi-isometries'', by the presence of metric-space quotient that is a Cayley graph of a cyclic group with large diameter.

In our forthcoming work \cite{ttLie} we will obtain sharp bounds on the ranks of the nilpotent and abelian groups appearing in \cref{cor:trof} \ref{item:c.iii} and \cref{cor:trof.finite} \ref{item:c.iii.fin}, respectively.

\cref{cor:trof,cor:trof.finite} can be seen as finitary versions of results of Benjamini, Finucane and the first author \cite{bft} describing the \emph{scaling limits} of vertex-transitive graphs of polynomial growth. Briefly, a sequence of compact metric spaces $X_1,X_2,\ldots$ is said to \emph{Gromov--Hausdorff converge} or \emph{GH converge} to a compact metric space $X$ if there exist $(1+o(1),o(1))$-quasi-isometries $X_n\to X$.
The \emph{scaling limit} of the sequence $(X_n)$, if it exists, is the GH limit of the sequence $X_1',X_2',\ldots$, in which each $X_n'$ is the space $X_n$ with the metric scaled by $\diam(X_n)^{-1}$. Benjamini, Finucane and the first author's main result shows that if $(\Gamma_n)$ is a sequence of finite connected vertex-transitive graphs with diameters tending to infinity satisfying $\diam(\Gamma_n)\ge|\Gamma_n|^\delta$ then $(\Gamma_n)$ has a subsequence with a scaling limit that is a torus of finite dimension with an invariant Finsler metric \cite[Theorem 1]{bft}. An analogous result for infinite vertex-transitive graphs satisfying a condition along the lines of \eqref{eq:trof.one.scale} is given by \cite[Theorem 3.2.2]{bft}. See \cite{bft} for precise statements and definitions.

\begin{remark} \cref{cor:trof} implies a strengthening of \cite[Theorem 3.2.2]{bft}. That theorem includes the hypothesis that the graphs $\Gamma_n$ admit discrete transitive groups of automorphisms (see \cref{sec:top} for the meaning of \emph{discrete} in this context). \cref{cor:trof} (combined with \cref{prop:red.to.cayley}, below) shows that each $\Gamma$ is $(1,o(m_n))$-quasi-isometric to a Cayley graph of comparable growth. Since a Cayley graph always admits the underlying group as a discrete transitive group of automorphisms, this shows that the discreteness assumption of \cite[Theorem 3.2.2]{bft} can be omitted. This verifies \cite[Conjecture 5.1.1]{bft}. We give a detailed proof of this, with the addition of sharp bounds on the dimension of the limit, in our forthcoming work \cite{ttLie}.
\end{remark}

Breuillard and the second author showed that if a finite Cayley graph has large diameter in the sense of \eqref{eq:large.diam} then it has a property called \emph{moderate growth} \cite[Corollary 1.9]{bt}. This property was introduced by Diaconis and Saloff-Coste \cite{dsc}, and one of the main points of their work was to show that a if $\Gamma$ is Cayley graph of moderate growth then the relaxation and mixing times of the simple random walk on $\Gamma$ are quadratic in $\diam(\Gamma)$. Given constants $A,d$ we define a finite connected vertex-transitive graph $\Gamma$ to have \emph{$(A,d)$-moderate growth} if
\[
\beta_\Gamma(n)\ge\frac{1}{A}\left(\frac{n}{\diam(\Gamma)}\right)^d|\Gamma|
\]
whenever $1\le n\le\diam(\Gamma)$. We refer the reader to \cite{lev-per} for the other relevant definitions, and to \cite{bt,dsc} for details of Diaconis and Saloff-Coste's results.

It is essentially immediate from the definition of moderate growth that a finite connected vertex-transitive graph of moderate growth has large diameter in the sense of \eqref{eq:large.diam}; the result of Breuillard and the second author is a converse to this. Using \cref{thm:main} we prove a similar result for general vertex-transitive graphs, as follows.
\begin{corollary}[equivalence of large diameter and moderate growth]\label{cor:mod.growth}
Let $\Gamma$ be a finite connected vertex-transitive graph. If $\Gamma$ has $(A,d)$-moderate growth then
\begin{equation}\label{eq:mod.growth.conc}
\diam(\Gamma)\ge\frac{1}{A^{1/d}}\left(\frac{|\Gamma|}{\beta_\Gamma(1)}\right)^{1/d}.
\end{equation}
Conversely, for every $\delta\ge0$ there exists $n_0=n_0(\delta)$ such that if $\diam(\Gamma)\ge n_0$ and
\begin{equation}\label{eq:mod.growth.hyp}
\diam(\Gamma)\ge\left(\frac{|\Gamma|}{\beta_\Gamma(1)}\right)^\delta
\end{equation}
then $\Gamma$ has $(O_\delta(1),O_\delta(1))$-moderate growth.
\end{corollary}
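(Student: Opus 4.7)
The first direction is immediate: specialising the definition of $(A,d)$-moderate growth at $n=1$ gives $\beta_\Gamma(1)\ge A^{-1}\diam(\Gamma)^{-d}|\Gamma|$, which rearranges to \eqref{eq:mod.growth.conc}.

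For the converse direction, the plan is to reduce to the corresponding Cayley-graph result of Breuillard and Tointon \cite[Corollary 1.9]{bt} via \cref{cor:trof.finite}. Fix a small $\lambda>0$ and apply \cref{cor:trof.finite} to the transitive subgroup $G=\Aut(\Gamma)$, producing a normal subgroup $H\lhd G$ with fibres of diameter at most $\diam(\Gamma)^{1/2+\lambda}$, a virtually abelian quotient $G_{\Gamma/H}$ of rank $O_\delta(1)$, vertex stabilisers of size $O_\delta(1)$, and a $(1,\diam(\Gamma)^{1/2+\lambda})$-quasi-isometry $\Phi\colon\Gamma\to\Cay(G_{\Gamma/H},S)$. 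From the bounded fibre and stabiliser sizes one reads off that $\diam_S(G_{\Gamma/H})\asymp\diam(\Gamma)$ and that $|G_{\Gamma/H}|$ is comparable to $|\Gamma|$ divided by the common $H$-orbit size, so the Cayley graph inherits a large-diameter condition of the form $\diam_S(G_{\Gamma/H})\ge(|G_{\Gamma/H}|/|S|)^{\delta'}$ for some $\delta'=\Omega_{\delta,\lambda}(1)$. Invoking \cite[Corollary 1.9]{bt} then produces $(A',d')$-moderate growth on $\Cay(G_{\Gamma/H},S)$ with $A',d'=O_\delta(1)$.

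Next I transfer moderate growth back to $\Gamma$ via $\Phi$. At scales $n\ge 2K$ with $K=\diam(\Gamma)^{1/2+\lambda}$, the $(1,K)$-quasi-isometry identifies a ball of radius $n$ in $\Gamma$ with the $\Phi$-preimage of a ball of radius $\asymp n$ in the Cayley graph, up to the bounded multiplicative factor coming from the fibre and stabiliser sizes, so moderate growth of the Cayley graph passes directly to moderate growth of $\Gamma$ in this range. For small scales $n\le 2K$ I combine the elementary bound $\beta_\Gamma(n)\ge n+1$ (from connectedness and vertex-transitivity) with the large-diameter hypothesis \eqref{eq:mod.growth.hyp}: provided $d$ is chosen large enough in terms of $\delta$ and $\lambda$, the target $A^{-1}(n/\diam(\Gamma))^d|\Gamma|$ is dominated by $n+1$ throughout this range, and the inequality holds automatically.

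The main obstacle I anticipate is the bookkeeping: choosing $\lambda$ and the eventual parameters $(A,d)$ consistently so that the small-scale trivial bound and the large-scale quasi-isometric transfer mesh, while keeping $d=O_\delta(1)$ independent of $\lambda$. A secondary technical point is verifying that $\Phi$ really does induce the claimed approximate fibre-bundle structure at scales $n\gg K$, which requires carefully tracking the common size of the $H$-orbits in $\Gamma$ via the stabiliser bounds supplied by \cref{cor:trof.finite}.
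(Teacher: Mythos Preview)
Your first direction matches the paper exactly. For the converse, your overall strategy---reduce to the Cayley-graph case via a structure theorem and transfer back---is plausible but differs from the paper's route, and your small-scale argument contains a genuine gap.

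\textbf{The gap.} Your bound $\beta_\Gamma(n)\ge n+1$ is not strong enough to handle small $n$, because $\beta_\Gamma(1)$ (the degree plus one) is not controlled by $\gamma=\diam(\Gamma)$. Concretely, at $n=1$ you need $2\ge A^{-1}\gamma^{-d}|\Gamma|$, i.e.\ $2A\gamma^d\ge|\Gamma|$; but the hypothesis only gives $|\Gamma|\le\gamma^{1/\delta}\beta_\Gamma(1)$, so you would need $\beta_\Gamma(1)\le 2A\gamma^{d-1/\delta}$. One can build examples (e.g.\ Cayley graphs of $\Z/m\Z$ with generating sets of size $m^{1-\varepsilon}$) where $\beta_\Gamma(1)$ is an arbitrarily large power of $\gamma$ while \eqref{eq:mod.growth.hyp} still holds, so no fixed $d$ depending only on $\delta,\lambda$ suffices. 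The fix is easy: replace $n+1$ by the trivial bound $\beta_\Gamma(n)\ge\beta_\Gamma(1)$. Then $\beta_\Gamma(n)/|\Gamma|\ge\beta_\Gamma(1)/|\Gamma|\ge\gamma^{-1/\delta}\ge(n/\gamma)^{d}$ once $d(1/2-\lambda)\ge1/\delta$, and the small-scale range is covered. This is exactly what the paper does.

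\textbf{Comparison with the paper's argument.} The paper does not go through \cref{cor:trof.finite} or \cite[Corollary~1.9]{bt} at all. Instead it applies \cref{cor:tao.growth} directly to $\Gamma$: for $\gamma^{1/2}\le n\le\gamma$, the hypothesis gives $\beta_\Gamma(n)\le n^{2/\delta}\beta_\Gamma(1)$, so \cref{cor:tao.growth} together with \cref{lem:p/w.mono} yields $|\Gamma|=\beta_\Gamma(\gamma)\ll_\delta(\gamma/n)^{O_\delta(1)}\beta_\Gamma(n)$; for $n\le\gamma^{1/2}$ it uses the $\beta_\Gamma(1)$ bound as above. This avoids entirely the quasi-isometry transfer and the fibre/stabiliser bookkeeping you flag as the main obstacle. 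Your route could be completed (with the fix above and with \cref{prop:red.to.cayley} supplying the precise comparison between $|S^m|$ and $\beta_\Gamma(m)$ that underlies your ``approximate fibre-bundle'' picture), but the paper's approach is shorter because \cref{cor:tao.growth} has already absorbed that transfer work.
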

\begin{remark}It turns out that some similar results to those of Diaconis and Saloff-Coste hold in vertex-transitive graphs of moderate growth; see \cite[Proposition 8.1]{her-pym}, for example. Thus, \cref{cor:mod.growth} implies that if $\Gamma$ is a finite connected vertex-transitive graph satisfying \eqref{eq:mod.growth.hyp} then the mixing time of the simple random walk on $\Gamma$ is comparable to $\diam(\Gamma)^2$. We will formulate this result more precisely in future work.
\end{remark}

\section{Quotients of vertex-transitive graphs}
In this section we present some of the fundamentals of quotients of vertex-transitive graphs, as well as some results that are more specific to our arguments. Throughout this section and the rest of the paper, given a vertex-transitive graph $\Gamma$ and a group $G<\Aut(\Gamma)$, we write $G_x$ for the stabiliser of a vertex $x\in\Gamma$, and $G_X$ for the setwise stabiliser of a subset $X\in\Gamma$, which is to say the subgroup $G_X=\{g\in G:g(X)=X\}$.

We start with the following easy fact.
\begin{lemma}\label{lem:G/H'}
Let $\Gamma$ be a vertex-transitive graph, and let $G$ and $H$ be subgroups of $\Aut(\Gamma)$ such that $G$ normalises $H$. Then the quotient graph $\Gamma/H$ is invariant under the action of $G$ on $\Gamma$.
\end{lemma}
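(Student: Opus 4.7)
The plan is to unpack both definitions and use the normalisation hypothesis as a purely formal identity of subsets of $\Gamma$. Fix $g \in G$ and $x \in \Gamma$, and set $y = g(x)$. We need to produce the equality $gH(x) = H(y)$, where the left hand side is the image of the fibre $H(x) = \{h(x) : h \in H\}$ under the automorphism $g$, and the right hand side is the fibre of $y$ under $H$.

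First I would write the left hand side as $gH(x) = \{g(h(x)) : h \in H\} = (gH)(x)$, where $gH = \{gh : h \in H\}$ is a coset inside $\Aut(\Gamma)$. Since $G$ normalises $H$, we have $gH = Hg$, so $(gH)(x) = (Hg)(x) = \{h(g(x)) : h \in H\} = H(y)$, which is exactly what we want.

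There is no real obstacle here; the lemma is essentially a restatement of the fact that conjugation by $g$ permutes $H$-orbits, which is immediate from $gHg^{-1} = H$. The only mild care needed is to keep track of the distinction between $g$ acting as an automorphism of $\Gamma$ and $g$ acting on cosets of $H$ inside $\Aut(\Gamma)$; both interpretations agree by associativity of the action, and once that is observed the proof reduces to the single line $gH(x) = Hg(x) = H(g(x))$.
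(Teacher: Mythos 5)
Your proof is correct and is essentially identical to the paper's one-line argument: the paper also observes that $gH(x)=H(g(x))$ follows directly from $G$ normalising $H$. Your version just spells out the coset manipulation $gH=Hg$ explicitly.
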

\begin{proof}
Given $g\in G$ and $x\in\Gamma$ the fact that $G$ normalises $H$ implies that $gH(x)=H(g(x))$.
\end{proof}
\begin{lemma}\label{lem:H.inv.for.G}
Let $\Gamma$ be a vertex-transitive graph, and let $G$ and $H$ be subgroups of $\Aut(\Gamma)$ such that $\Gamma/H$ is invariant under the action of $G$ on $\Gamma$. Define $\ph:G\to\Sym(\Gamma/H)$ via $\ph(g)(H(x))=gH(x)$. Then $\ph$ is a homomorphism $G\to\Aut(\Gamma/H)$.
\end{lemma}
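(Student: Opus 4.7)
The lemma is essentially a routine compatibility check, and the plan is to verify three things in sequence: (i) $\ph(g)$ is a well-defined self-map of $\Gamma/H$, (ii) $\ph(g)$ is a graph automorphism of $\Gamma/H$, and (iii) $\ph$ is multiplicative. There is no real obstacle; the only mildly non-trivial point is verifying that $\ph(g)$ preserves adjacency in both directions in the quotient.

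First I would check well-definedness. Given $g\in G$ and $x\in\Gamma$, the hypothesis that $\Gamma/H$ is invariant under the action of $G$ guarantees the existence of $y\in\Gamma$ with $gH(x)=H(y)$, so that the formula $\ph(g)(H(x))=gH(x)$ produces a bona fide element of $\Gamma/H$. Independence of the representative is automatic: if $H(x)=H(x')$ as subsets of $\Gamma$, then applying the function $g$ to this equality yields $gH(x)=gH(x')$.

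Next I would show $\ph(g)\in\Aut(\Gamma/H)$. Bijectivity follows by observing that the hypothesis is symmetric in the group element, so $\ph(g^{-1})$ is also defined, and the identities
\[
\ph(g^{-1})(\ph(g)(H(x)))=g^{-1}gH(x)=H(x)
\]
(and symmetrically) show $\ph(g^{-1})=\ph(g)^{-1}$. For the edge condition, suppose $H(x)\sim H(y)$ in $\Gamma/H$; by definition there exist $x_0\in H(x)$ and $y_0\in H(y)$ with $x_0\sim y_0$ in $\Gamma$. Since $g\in\Aut(\Gamma)$, $g(x_0)\sim g(y_0)$; and $g(x_0)\in gH(x)=\ph(g)(H(x))$, $g(y_0)\in gH(y)=\ph(g)(H(y))$, so $\ph(g)(H(x))\sim\ph(g)(H(y))$. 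The reverse implication follows by the same reasoning applied to $\ph(g^{-1})$.

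Finally, the homomorphism property is immediate from associativity of the action of $\Aut(\Gamma)$ on subsets of $\Gamma$: for any $g,h\in G$ and $x\in\Gamma$,
\[
\ph(gh)(H(x))=(gh)H(x)=g(hH(x))=\ph(g)(\ph(h)(H(x))),
\]
so $\ph(gh)=\ph(g)\ph(h)$, completing the proof.
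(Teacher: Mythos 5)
Your proof is correct and follows essentially the same route as the paper's: check that $\ph$ lands in $\Sym(\Gamma/H)$ and is multiplicative (which the paper dismisses as ``trivial''), then verify that each $\ph(g)$ preserves adjacency by lifting an edge in the quotient to an edge in $\Gamma$, applying $g$, and pushing back down. You are somewhat more explicit than the paper about well-definedness, bijectivity, and the need to check both directions of the edge condition (the paper checks one direction and leaves the converse, via $g^{-1}$, implicit), but the content is the same.
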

\begin{proof}The map $\ph$ is trivially a homomorphism of $G$ into $\Sym(\Gamma/H)$. To see that each $\ph(g)$ is an automorphism of $\Gamma/H$, let $g\in G$, and suppose that $H(x)\sim H(y)$ in $\Gamma/H$. This means that there exists $h\in H$ such that $hx\sim y$ in $\Gamma$, and hence that
\begin{equation}\label{eq:H.inv.for.G}
ghx\sim gy
\end{equation}
in $\Gamma$. The $G$-invariance of $\Gamma/H$ means that both $\ph(g)(H(x))$ and $\ph(g)(H(y))$ are vertices of $\Gamma/H$, and since $ghx\in\ph(g)(H(x))$ and $gy\in\ph(g)(H(y))$, \eqref{eq:H.inv.for.G} means that $\ph(g)(H(x))\sim\ph(g)(H(y))$ in $\Gamma/H$, as required.
\end{proof}
If $\Gamma$ is a vertex-transitive graph, and $G$ and $H$ are subgroups of $\Aut(\Gamma)$ such that $\Gamma/H$ is invariant under the action of $G$ on $\Gamma$, we call the action of $G$ on $\Gamma/H$ defined by the map $\ph$ defined in \cref{lem:H.inv.for.G} the action \emph{induced} by the action of $G$ on $\Gamma$. We use the notation $G_{\Gamma/H}$ to mean the image group $\ph(G)\subset\Aut(\Gamma/H)$. Note that $G_{\Gamma/H}$ is isomorphic to the quotient of $G$ by the normal subgroup $\{g\in G:gH(x)=H(x)\text{ for every }x\in\Gamma\}$, which we call the \emph{kernel} of this induced action.

In light of \cref{lem:G/H',lem:H.inv.for.G}, if $\Gamma$ is a vertex-transitive graph, and $G$ and $H$ are subgroups of $\Aut(\Gamma)$ such that $G$ normalises $H$, then $G_{\Gamma/H}$ is always well defined. We assume this implicitly throughout the rest of this paper, without further mention.

\begin{lemma}\label{lem:fibres.isom}
Let $\Gamma$ be a vertex-transitive graph, and let $G$ and $H$ be subgroups of $\Aut(\Gamma)$ such that $G$ is transitive and normalises $H$. Then $G$ acts transitively on the fibres of the projection $\Gamma\to\Gamma/H$. In particular these fibres are isometric to one another (with respect to the restriction of the graph distance on $\Gamma$).
\end{lemma}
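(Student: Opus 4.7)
The plan is to split into two assertions: first, that $G$ acts transitively on the set of fibres $\{H(x):x\in\Gamma\}$, and second, that this implies any two fibres are isometric for the restricted graph metric.

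For transitivity on fibres, I would start with two arbitrary fibres $H(x)$ and $H(y)$. Since $G$ is transitive on $\Gamma$, pick $g\in G$ with $g(x)=y$. Now invoke the identity $gH(x)=H(g(x))$, which already appeared in the proof of \cref{lem:G/H'} and holds precisely because $G$ normalises $H$: for any $h\in H$ one can write $gh=(ghg^{-1})g$ with $ghg^{-1}\in H$, so $gH(x)\subset H(g(x))$, and the reverse inclusion is symmetric. This gives $gH(x)=H(y)$, which is exactly the statement that $g$ maps the fibre $H(x)$ onto the fibre $H(y)$.

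For the ``in particular'' claim, observe that the same element $g\in G$ is an automorphism of $\Gamma$, hence an isometry of $(\Gamma,d_\Gamma)$. Its restriction to $H(x)$ is therefore a distance-preserving bijection onto $H(y)$ when both fibres carry the subspace metric inherited from $d_\Gamma$. That is precisely what it means for the two fibres to be isometric.

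There is essentially no obstacle here: the whole content is the normalisation identity $gH(x)=H(g(x))$ combined with transitivity of $G$ on vertices, and then the trivial observation that automorphisms restrict to isometries of subsets. I would keep the write-up to two or three lines, and cite \cref{lem:G/H'} (or just redo its one-line computation) for the identity $gH(x)=H(g(x))$.
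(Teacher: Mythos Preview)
Your proof is correct and matches the paper's own argument essentially line for line: pick $g\in G$ carrying one base point to the other, use normalisation to get $gH(x)=H(g(x))$, and conclude. The paper's write-up is even terser than yours (it omits the explicit remark that $g$ restricts to an isometry of subsets), but the content is identical.
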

\begin{proof}
Given $x,y\in\Gamma$, the transitivity of $G$ implies that there exists $g\in G$ such that $g(y)=x$, and then the fact that $G$ normalises $H$ implies that $H(x)=H(g(y))=g(H(y))$.
\end{proof}

\begin{lemma}\label{lem:cay-ab.gen.set}
Let $\Gamma$ be a vertex-transitive graph with a distinguished vertex $e$, and suppose that $G$ is a transitive subgroup of $\Aut(\Gamma)$. Let $S=\{g\in G:d(g(e),e)\le1\}$. Then $S$ is symmetric, and an element $g\in G$ satisfies $g(e)\in B_\Gamma(e,n)$ for a given $n\ge1$ if and only if $g\in S^n$. In particular, $S$ is a symmetric generating set for $G$ containing the identity.
\end{lemma}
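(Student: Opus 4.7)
The plan is to dispatch the three claims in the order symmetry, the $B_\Gamma(e,n)\leftrightarrow S^n$ correspondence, and finally the generating-set property. Symmetry is immediate: if $g\in S$ then, since $g^{-1}$ is an isometry of $\Gamma$,
\[
d(g^{-1}(e),e)=d(e,g(e))\le 1,
\]
so $g^{-1}\in S$; and $1\in S$ since $d(1\cdot e,e)=0$. The bulk of the work is the biconditional characterisation of $S^n$, after which the generating-set conclusion follows (using that $\Gamma$ is connected, so that every $g\in G$ sends $e$ into some ball $B_\Gamma(e,n)$).

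For the forward direction, I would argue by induction on $n$, the case $n=1$ being the definition of $S$. For the induction step, write $g=s_1\cdots s_n\in S^n$ and let $h=s_1\cdots s_{n-1}$. Because $h$ is an isometry of $\Gamma$,
\[
d(g(e),h(e))=d(h s_n(e),h(e))=d(s_n(e),e)\le 1,
\]
and by the inductive hypothesis $d(h(e),e)\le n-1$, so the triangle inequality gives $d(g(e),e)\le n$, i.e.\ $g(e)\in B_\Gamma(e,n)$.

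For the backward direction, suppose $d(g(e),e)\le n$ and pick a walk $e=x_0,x_1,\ldots,x_n=g(e)$ in $\Gamma$ in which consecutive vertices are either equal or adjacent (pad a geodesic with repeated endpoints if necessary so as to have length exactly $n$). By transitivity of $G$, choose $h_i\in G$ with $h_i(e)=x_i$, taking $h_0=1$ and $h_n=g$, and set $s_i=h_{i-1}^{-1}h_i$. Then
\[
d(s_i(e),e)=d(h_{i-1}^{-1}h_i(e),h_{i-1}^{-1}h_{i-1}(e))=d(x_i,x_{i-1})\le 1,
\]
using that $h_{i-1}^{-1}$ is an isometry; hence $s_i\in S$. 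The product telescopes: $s_1 s_2\cdots s_n=h_0^{-1}h_n=g$, so $g\in S^n$.

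Neither step presents a real obstacle; the only mild subtlety is the padding of the geodesic in the backward direction, which is why the statement is restricted to $n\ge 1$. Finally, $S$ generates $G$: given $g\in G$, connectedness of $\Gamma$ yields some $n$ with $g(e)\in B_\Gamma(e,n)$, and the biconditional then gives $g\in S^n$.
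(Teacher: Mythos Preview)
Your proof is correct and follows essentially the same approach as the paper's. The only presentational difference is that the paper packages both directions of the biconditional into a single inductive chain of equivalences (at each step writing $g(e)\in B_\Gamma(e,n)\iff g(e)\in B_\Gamma(h(e),1)$ for some $h\in S^{n-1}\iff h^{-1}g\in S$), whereas you separate the two directions, handling the forward one by the triangle inequality and the backward one by an explicit telescoping factorisation along a walk; the underlying idea---use transitivity to pick an element $h$ landing on the penultimate vertex and peel off $h^{-1}g\in S$---is identical.
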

\begin{proof}
To see that $S$ is symmetric, note that $d(g^{-1}(e),e)=d(g^{-1}(e),g^{-1}(g(e)))=d(g(e),e)$. The fact that $g(e)\in B_\Gamma(e,n)$ for a given $n\ge1$ if and only if $g\in S^n$ is essentially \cite[Lemma 3]{woess}. We prove it by induction on $n$, the case $n=1$ being true by definition. For $n\ge2$ we have
\begin{align*}
g(e)\in B_\Gamma(e,n)&\iff g(e)\in B_\Gamma(x,1)\text{ for some }x\in B_\Gamma(e,n-1)\\
   &\iff g(e)\in B_\Gamma(h(e),1)\text{ for some }h\in S^{n-1}&\text{(by induction)}\\
   &\iff h^{-1}g\in S\text{ for some }h\in S^{n-1}&\text{(by the $n=1$ case)}\\
   &\iff g\in S^n,
\end{align*}
as required. 
\end{proof}

\begin{lemma}\label{lem:image.of.S}
Let $\Gamma$ be a vertex-transitive graph with a distinguished vertex $e$, and suppose that $H\lhd G<\Aut(\Gamma)$. Then for $g\in G$ and $n\ge0$ we have $d_{G(e)/H}(gH(e),H(e))=n$ if and only if there exists $h\in H$ such that $d_\Gamma(hg(e),e)=n$. In particular, writing $\pi:G\to G_{\Gamma/H}$ for the quotient homomorphism we have $\pi(G_e)=(G_{\Gamma/H})_{H(e)}$. If $G$ is transitive then, writing $S=\{g\in G:d_\Gamma(g(e),e)\le1\}$, for every $n\ge1$ we have
\begin{equation}\label{eq:image.of.S^n}
\pi(S^n)=\{g\in G_{\Gamma/H}:d_{\Gamma/H}(gH(e),H(e))\le n\}.
\end{equation}
\end{lemma}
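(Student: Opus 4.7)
My plan is to first establish the formula
\[
d_{G(e)/H}(gH(e),H(e)) = \min_{h\in H} d_\Gamma(hg(e),e),
\]
from which the ``iff'' part of the lemma follows by taking $n$ to be the common value of the two integers. The inequality $\le$ is immediate: for any $h\in H$, since $H\lhd G$ gives $hg(e)\in gH(e)\subseteq G(e)$, the one-step chain $x_1=hg(e)$, $y_1=e$ is admissible in the definition \eqref{eq:quot.metric} of $d_{G(e)/H}$ and has length $d_\Gamma(hg(e),e)$. For the $\ge$ inequality, I would take a chain $x_1=g(e),\,y_1,\,x_2,\,y_2,\ldots,\,y_m=e$ realizing the quotient distance, and ``unfold'' it using that each $x_{i+1}=k_iy_i$ for some $k_i\in H$: applying the isometry $k_1^{-1}k_2^{-1}\cdots k_{i-1}^{-1}$ to a $\Gamma$-geodesic from $x_i$ to $y_i$ yields a subpath starting exactly where the previous one ended (since $k_1^{-1}\cdots k_{i-1}^{-1}x_i = k_1^{-1}\cdots k_{i-2}^{-1}y_{i-1}$), so the concatenation is a single path in $\Gamma$ from $g(e)$ to $h(e)$ of total length $\sum_i d_\Gamma(x_i,y_i)$, where $h:=k_1^{-1}\cdots k_{m-1}^{-1}\in H$. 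Then $d_\Gamma(h^{-1}g(e),e)=d_\Gamma(g(e),h(e))$ is at most the length of the original chain, so $\min_{h'}d_\Gamma(h'g(e),e)\le d_{G(e)/H}(gH(e),H(e))$.

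Next, for $\pi(G_e)=(G_{\Gamma/H})_{H(e)}$, the inclusion $\subseteq$ is immediate since $g(e)=e$ implies $gH(e)=H(g(e))=H(e)$. For $\supseteq$, an element $g$ with $gH(e)=H(e)$ satisfies $g(e)=h(e)$ for some $h\in H$, so $h^{-1}g\in G_e$; moreover $\pi(h^{-1}g)=\pi(g)$ because $H$ lies in the kernel $\{g'\in G:g'H(x)=H(x)\text{ for all }x\in\Gamma\}$ of $\pi$, since each $h\in H$ sends every fibre $H(x)$ to itself.

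Finally, for the transitive case: since $G(e)=\Gamma$, the metrics $d_{G(e)/H}$ and $d_{\Gamma/H}$ coincide (both are given by \eqref{eq:quot.metric} with $X=\Gamma$, which agrees with the graph metric on $\Gamma/H$ as noted just after \eqref{eq:quot.metric}). For the inclusion $\pi(S^n)\subseteq\{g'\in G_{\Gamma/H}:d_{\Gamma/H}(g'H(e),H(e))\le n\}$, any $g=s_1\cdots s_n\in S^n$ satisfies $d_\Gamma(g(e),e)\le n$ by the triangle inequality (the same inductive argument as in the proof of \cref{lem:cay-ab.gen.set}), and then the key formula with $h=1$ gives $d_{\Gamma/H}(\pi(g)H(e),H(e))\le n$. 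For the reverse inclusion, given $g'\in G_{\Gamma/H}$ with $d_{\Gamma/H}(g'H(e),H(e))\le n$, I would lift to some $\tilde g\in G$ with $\pi(\tilde g)=g'$, apply the key formula to find $h\in H$ with $d_\Gamma(h\tilde g(e),e)\le n$, then invoke \cref{lem:cay-ab.gen.set} (with $G$ transitive) to conclude $h\tilde g\in S^n$, whence $g'=\pi(\tilde g)=\pi(h\tilde g)\in\pi(S^n)$ (using $H\subseteq\ker\pi$).

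The main obstacle I expect is the unfolding construction in the $\ge$ inequality of the key formula: although conceptually clear, it requires careful bookkeeping of which element of $H$ is applied to which segment so that the translated geodesics match up endpoint-to-endpoint and concatenate into a single path in $\Gamma$ of the desired length.
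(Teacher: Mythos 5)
Your proof is correct, and it proves the same key identity $d_{G(e)/H}(gH(e),H(e))=\min_{h\in H}d_\Gamma(hg(e),e)$ as the paper does; the difference is in how the ``$\ge$'' direction is organised. The paper runs a backwards induction on the number $m$ of nontrivial steps in a minimising chain, asserting the intermediate identity \eqref{eq:sum.ind} for the partial sums and applying a single triangle inequality to peel off the last step. You instead unfold the whole chain at once, translating the $i$-th geodesic segment by the isometry $k_1^{-1}\cdots k_{i-1}^{-1}$ so that the images concatenate end-to-end into one path in $\Gamma$ from $g(e)$ to $h(e)$, with $h=k_1^{-1}\cdots k_{m-1}^{-1}\in H$, of length equal to the quotient distance. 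These are really the same computation in two guises: unwinding the paper's recursion $h^{(m)}=h^{(m-1)}h_2^{-1}$ (with $h^{(1)}=1$) produces exactly your $k_1^{-1}\cdots k_{m-1}^{-1}$. Your presentation has the modest advantages of making the geometry explicit and of not needing the auxiliary claim \eqref{eq:sum.ind}, which the paper states without proof. The remaining deductions --- $\pi(G_e)=(G_{\Gamma/H})_{H(e)}$, and \eqref{eq:image.of.S^n} via \cref{lem:cay-ab.gen.set} together with $H\subseteq\ker\pi$ --- are the same as the paper's. (One small remark on the statement rather than on your proof: the ``if and only if'' as literally written is stronger than what either you or the paper actually establish, namely the minimum formula; you correctly read it in the intended sense.)
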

\begin{proof}
We certainly have $d_{G(e)/H}(gH(e),H(e))\le d_\Gamma(gh(e),e)$ for every $h\in H$. For the converse, if $d_{G(e)/H}(gH(e),H(e))=0$ then by definition and normality of $H$ there exists $h\in H$ such that $hg(e)=e$, so we may assume that $d_{G(e)/H}(gH(e),H(e))\ge1$. Let $m$ be maximal such that there exist $x_1,\ldots,x_m,y_1,\ldots,y_m$ with $d_\Gamma(x_i,y_i)\ne0$ for every $i$ achieving the minimum in the expression \eqref{eq:quot.metric} for $d_{G(e)/H}(gH(e),H(e))$. For each $i$ pick $g_i\in G$ such that $g_i(e)=y_i$, taking $g_m=g$. For notational purposes set $g_0=1$, and note that
\begin{equation}\label{eq:sum.ind}
d_{G(e)/H}(g_\ell H(e),H(e))=\sum_{i=1}^\ell d_\Gamma(x_i,y_i)
\end{equation}
whenever $1\le\ell\le m$. By induction on $m$ we may assume that there exists $h_1\in H$ such that $d_{G(e)/H}(g_{m-1}H(e),H(e))=d_\Gamma(h_1g_{m-1}(e),e)$,
the case $m=1$ being trivial. By definition there exists $h_2\in H$ such that $x_m=h_2g_{m-1}(e)$, and so \eqref{eq:sum.ind} then gives
\begin{align*}
d_{G(e)/H}(g_mH(e),H(e))&=d_\Gamma(h_1g_{m-1}(e),e)+d_\Gamma(x_m,y_m)\\
   &=d_\Gamma(h_2g_{m-1}(e),h_2h_1^{-1}e)+d_\Gamma(h_2g_{m-1}(e),g_m(e))\\
   &\ge d_\Gamma(e,h_1h_2^{-1}g_m(e)),
\end{align*}
as required. If $G$ is transitive then \eqref{eq:image.of.S^n} follows from the first part of the lemma and \cref{lem:cay-ab.gen.set}.
\end{proof}

\begin{lemma}\label{lem:make.H.kernel}
Let $\Gamma$ be a vertex-transitive graph, and suppose that $H\lhd G<\Aut(\Gamma)$. Let $H'\lhd G$ be the kernel of the induced action of $G$ on $\Gamma/H$. Then $H\subset H'$ and $H'(x)=H(x)$ for every $x\in\Gamma$. In particular, $\Gamma/H'=\Gamma/H$ and $G_{\Gamma/H'}=G/H'$. Now suppose in addition that $G$ is transitive, write $S=\{g\in G:d(g(e),e)\le1\}$, and suppose that $H\subset S^n$ for a given $n\in\N$. Then $H'\subset S^n$.
\end{lemma}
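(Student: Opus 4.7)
The plan is to unpack the definition of the kernel $H'$ and chase through the consequences; most of the statements are essentially direct from definitions, with the last part using \cref{lem:cay-ab.gen.set}.

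First I would verify $H\subset H'$. By definition, $H'=\{g\in G:gH(x)=H(x)\text{ for every }x\in\Gamma\}$, and if $h\in H$ then $hH(x)=H(x)$ is immediate since the coset $H(x)$ is preserved by left multiplication by elements of $H$ (here $H(x)$ means the orbit of $x$ under $H$, and $h\in H$ maps this orbit to itself). Next, to get $H'(x)=H(x)$ for every $x\in\Gamma$, the inclusion $H(x)\subset H'(x)$ is immediate from $H\subset H'$; for the reverse, given $h'\in H'$ the defining property $h'H(x)=H(x)$ forces $h'(x)\in H(x)$, whence $H'(x)\subset H(x)$.

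The identity $\Gamma/H'=\Gamma/H$ then follows since both quotient graphs have the same vertex set $\{H(x):x\in\Gamma\}$ and the same adjacency relation (which depends on the fibres only). For $G_{\Gamma/H'}=G/H'$, recall from the paragraph following \cref{lem:H.inv.for.G} that $G_{\Gamma/H'}$ is the quotient of $G$ by the kernel of the induced action of $G$ on $\Gamma/H'$. But since $\Gamma/H'=\Gamma/H$ as graphs, that kernel is exactly the kernel of the induced action on $\Gamma/H$, which is $H'$ by definition.

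For the final claim, assume $G$ is transitive and $H\subset S^n$. Let $h'\in H'$; then $h'(e)\in H'(e)=H(e)$, so there is some $h\in H$ with $h'(e)=h(e)$. Since $h\in H\subset S^n$, \cref{lem:cay-ab.gen.set} gives $h(e)\in B_\Gamma(e,n)$, hence $h'(e)\in B_\Gamma(e,n)$, and applying \cref{lem:cay-ab.gen.set} in the opposite direction yields $h'\in S^n$. I do not anticipate any real obstacle; the only mild subtlety is being careful to distinguish between $H$ as a subgroup and $H(x)$ as an orbit, and the observation that the relation ``$h'(e)=h(e)$'' transfers membership in $S^n$ from $h$ to $h'$ via \cref{lem:cay-ab.gen.set} (which depends only on where the element sends $e$).
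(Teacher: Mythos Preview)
Your proof is correct and follows essentially the same route as the paper's: both unpack the definition of $H'$, verify $H'(x)=H(x)$ by noting that $h'\in H'$ implies $h'(x)\in h'H(x)=H(x)$, and then deduce $H'\subset S^n$ from $H'(e)=H(e)\subset B_\Gamma(e,n)$ via \cref{lem:cay-ab.gen.set}. Your version simply fills in a few more details (the explicit intermediate element $h$ and the argument for $\Gamma/H'=\Gamma/H$ and $G_{\Gamma/H'}=G/H'$) that the paper leaves implicit.
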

\begin{proof}
We have $H'=\{g\in G:gH(x)=H(x)\text{ for every }x\in\Gamma\}$, which trivially satisfies $H\subset H'\lhd G$. Given $h\in H'$ and $x\in\Gamma$ we have $h(x)\in hH(x)=H(x)$ by definition of $H'$, and so $H'(x)=H(x)$. If $G$ is transitive then this combines with \cref{lem:cay-ab.gen.set} to imply that $H'\subset S^n$.
\end{proof}

\begin{lemma}\label{lem:ball.preimage}
Let $\Gamma$ be a vertex-transitive graph with a distinguished vertex $e$, let $G<\Aut(\Gamma)$ be a transitive subgroup, let $H\lhd G$, and write $\psi:\Gamma\to\Gamma/H$ for the quotient map. Write $S=\{g\in G:d(g(e),e)\le1\}$, let $k\in\N$, and suppose that $H\subset S^k$. Then
$B_\Gamma(e,m)\subset\psi^{-1}(B_{\Gamma/H}(H(e),m))\subset B_\Gamma(e,m+k)$
for every $m\in\N$.
\end{lemma}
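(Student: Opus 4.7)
The plan is to prove the two inclusions separately. Both are essentially unwinding the definitions together with the lemmas already established above, so the main step is identifying which earlier fact to cite at each point rather than a genuinely hard argument.

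For the first inclusion, I would observe that the quotient map $\psi:\Gamma\to\Gamma/H$ is $1$-Lipschitz. Indeed, by the definition of the quotient graph, whenever $x\sim y$ in $\Gamma$ we have either $H(x)=H(y)$ or $H(x)\sim H(y)$ in $\Gamma/H$, and concatenating along a shortest path from $e$ to a given $x\in B_\Gamma(e,m)$ gives $d_{\Gamma/H}(\psi(x),\psi(e))\le d_\Gamma(x,e)\le m$, so $x\in\psi^{-1}(B_{\Gamma/H}(H(e),m))$.

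For the second inclusion, let $x\in\psi^{-1}(B_{\Gamma/H}(H(e),m))$; I want $d_\Gamma(x,e)\le m+k$. Use the transitivity of $G$ to write $x=g(e)$ for some $g\in G$; since $G$ is transitive we have $G(e)=\Gamma$, so $d_{G(e)/H}(gH(e),H(e))=d_{\Gamma/H}(H(x),H(e))\le m$. By \cref{lem:image.of.S} there exists $h\in H$ with $d_\Gamma(hg(e),e)\le m$, i.e.\ $d_\Gamma(hx,e)\le m$. Because $h\in H\subset S^k$, and $S^k$ is symmetric, we have $h^{-1}\in S^k$, and then \cref{lem:cay-ab.gen.set} gives $d_\Gamma(h^{-1}(e),e)\le k$. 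Applying the isometry $h$ and the triangle inequality,
\[
d_\Gamma(x,e)\le d_\Gamma(x,h^{-1}(e))+d_\Gamma(h^{-1}(e),e)=d_\Gamma(hx,e)+d_\Gamma(h^{-1}(e),e)\le m+k,
\]
as required.

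There is no real obstacle; the whole argument is a direct appeal to \cref{lem:image.of.S} plus the symmetry of $S$. The only thing to be a little careful about is remembering that transitivity lets us replace the orbit metric $d_{G(e)/H}$ with the quotient graph metric $d_{\Gamma/H}$, so that \cref{lem:image.of.S} applies as stated.
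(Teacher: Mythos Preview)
Your proof is correct and follows essentially the same approach as the paper's: both reduce the second inclusion to \cref{lem:image.of.S} and \cref{lem:cay-ab.gen.set}. The only cosmetic difference is that the paper passes through the group, invoking \cref{lem:make.H.kernel} so that $g\in S^mH\subset S^{m+k}$, whereas you use the first clause of \cref{lem:image.of.S} to produce an explicit $h\in H$ and finish with the triangle inequality in $\Gamma$; this avoids the appeal to \cref{lem:make.H.kernel} but is otherwise the same argument.
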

\begin{proof}
The first inclusion is trivial. Write $\pi:G\to G_{\Gamma/H}$ for the quotient homomorphism, and note that by \cref{lem:make.H.kernel} we may assume that $H=\ker\pi$. Let $x\in\Gamma$ and $m\in\N$. The transitivity of $G$ implies that there exists $g\in G$ such that $g(e)=x$, and so
\begin{align*}
H(x)\in B_{\Gamma/H}(H(e),m)&\implies gH\in\pi(S^m)&\text{(by \cref{lem:image.of.S})}\phantom{,}\\
   &\implies g\in S^mH\\
   &\implies g\in S^{m+k}\\
   &\implies x\in B_\Gamma(e,m+k)&\text{(by \cref{lem:cay-ab.gen.set})},
\end{align*}
giving the second inclusion.
\end{proof}

\begin{lemma}\label{cor:S^n.cosets}
Let $\Gamma$ be a connected, locally finite vertex-transitive graph with a distinguished vertex $e$. Let $G<\Aut(\Gamma)$ be a transitive subgroup, and for each $x\in\Gamma$ fix some $g_x\in G$ such that $g_x(e)=x$. Let $S=\{g\in G:d(g(e),e)\le1\}$. Let $n\in\N$. Then $S^n=\bigcup_{x\in B_\Gamma(e,n)}g_xG_e$.
\end{lemma}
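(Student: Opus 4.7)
The plan is to deduce this lemma as essentially a corollary of \cref{lem:cay-ab.gen.set}, which already characterises membership of $S^n$ in terms of the action on $e$. The key observation is that the cosets $g_x G_e$ are precisely the preimages of $x$ under the orbit map $g \mapsto g(e)$.

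First I would prove the inclusion $\bigcup_{x\in B_\Gamma(e,n)}g_xG_e \subset S^n$. Pick $x\in B_\Gamma(e,n)$ and $h\in G_e$. Then $(g_xh)(e)=g_x(h(e))=g_x(e)=x$, so $g_xh(e)\in B_\Gamma(e,n)$, and hence $g_xh\in S^n$ by \cref{lem:cay-ab.gen.set}. (We should also note that $S^0 = \{1\}$ equals $g_e G_e$ when $n=0$, taking $g_e=1$, but since the statement is for $n\in\N$ and we can take $n\ge1$ here this is not really an issue.)

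For the reverse inclusion, let $g\in S^n$. By \cref{lem:cay-ab.gen.set} applied in the other direction, we have $g(e)\in B_\Gamma(e,n)$. Set $x=g(e)\in B_\Gamma(e,n)$. Then $g_x^{-1}g(e) = g_x^{-1}(x) = e$, so $g_x^{-1}g\in G_e$, meaning $g\in g_xG_e$, which lies in the union.

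I do not anticipate any real obstacle: the statement is essentially a repackaging of \cref{lem:cay-ab.gen.set} combined with the elementary observation that two elements of $G$ lie in the same left coset of $G_e$ if and only if they send $e$ to the same vertex. The only mild subtlety is making sure the choice of representatives $g_x$ does not affect the conclusion — but the argument above shows that whatever coset representative we picked, every element of $S^n$ lies in the coset associated with its image of $e$.
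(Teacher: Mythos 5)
Your proof is correct and follows essentially the same route as the paper: both reduce to the characterisation $S^n=\{g\in G:g(e)\in B_\Gamma(e,n)\}$ from \cref{lem:cay-ab.gen.set} and then observe that this set decomposes into the cosets $g_xG_e$ for $x\in B_\Gamma(e,n)$. You have simply spelled out the two inclusions that the paper compresses into a single line.
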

\begin{proof}
\cref{lem:cay-ab.gen.set} implies that $S^n=\{g\in G:g(e)\in B_\Gamma(e,n)\}=\bigcup_{x\in B_\Gamma(e,n)}g_xG_e$.
\end{proof}

\begin{lemma}\label{lem:stab.normal}
Let $\Gamma$ be a vertex-transitive graph, and let $G<\Aut(\Gamma)$. Let $x,y\in\Gamma$, and suppose that $g\in G$ satisfies $g(x)=y$. Then $G_x=(G_y)^g$. In particular, if $G$ is transitive then all of its vertex stabilisers are conjugate to one another.
\end{lemma}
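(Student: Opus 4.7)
The plan is to prove the identity $G_x = (G_y)^g$ by a direct double-inclusion computation, relying purely on the definition of the stabiliser and the hypothesis $g(x)=y$; here we interpret $(G_y)^g$ in the standard way as $g^{-1} G_y g$. This is a routine orbit-stabiliser style fact, so no serious obstacle is anticipated.

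First I would show $gG_xg^{-1}\subseteq G_y$: given $h\in G_x$, simply compute
\[
(ghg^{-1})(y) \;=\; gh(g^{-1}(y)) \;=\; gh(x) \;=\; g(x) \;=\; y,
\]
using $g(x)=y$ and $h(x)=x$. For the reverse inclusion, take $k\in G_y$ and check that $g^{-1}kg$ fixes $x$ by the analogous computation
\[
(g^{-1}kg)(x) \;=\; g^{-1}k(y) \;=\; g^{-1}(y) \;=\; x.
\]
Together these give $gG_xg^{-1}=G_y$, i.e.\ $G_x = g^{-1}G_yg = (G_y)^g$.

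For the ``in particular'' clause, assume $G$ is transitive and let $x,y\in\Gamma$ be arbitrary. By transitivity there exists $g\in G$ with $g(x)=y$, and the first part of the lemma then yields $G_x=(G_y)^g$, showing that any two vertex stabilisers are conjugate in $G$.
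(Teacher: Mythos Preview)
Your argument is correct and is exactly the routine computation the paper has in mind; the paper simply records the proof as ``Trivial.'' There is no substantive difference in approach.
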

\begin{proof}
Trivial.
\end{proof}

\section{The topology of pointwise convergence on $\Aut(\Gamma)$}\label{sec:top}

Woess \cite{woess} gave a beautiful and transparent proof of \cref{thm:trof} by viewing $\Aut(\Gamma)$ as a \emph{topological group}. These are introduced in detail in \cite{hew-ross}; here we simply present the properties we require.

A topological group is a group $G$ endowed with a topology with respect to which the maps
\begin{equation}\label{eq:mult}
\begin{array}{ccc}
G\times G&\to&G\\
(g,h)&\mapsto&gh
\end{array}
\end{equation}
and
\begin{equation}\label{eq:inv}
\begin{array}{ccc}
G&\to&G\\
g&\mapsto&g^{-1}
\end{array}
\end{equation}
are both continuous.

Before we describe the topology on $\Aut(\Gamma)$, we present some straightforward general properties of topological groups.
\begin{lemma}[{\cite[(4.4)]{hew-ross}}]\label{lem:top.grp.prod}
Let $G$ be a topological group, and let $A,B\subset G$. If $A$ is open then so are $AB$ and $BA$. If $A$ and $B$ are compact then so is $AB$. If $A$ is closed and $B$ is compact then $AB$ and $BA$ are closed.\qed
\end{lemma}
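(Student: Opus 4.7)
The plan is to prove the three assertions separately, using only continuity of the multiplication map $\mu : G \times G \to G$ and of inversion. For the openness claim, I would write $AB = \bigcup_{b \in B} Ab$. Right translation $\rho_b : g \mapsto gb$ is a homeomorphism of $G$ onto itself, since it is continuous (as a composition of $g \mapsto (g,b)$ with $\mu$) and has continuous inverse $\rho_{b^{-1}}$. Therefore $Ab = \rho_b(A)$ is open whenever $A$ is, so $AB$ is a union of open sets and hence open. The argument for $BA$ is symmetric, using left translation.

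For the compactness claim, $A \times B$ is compact in the product topology by Tychonoff's theorem, and $AB = \mu(A \times B)$ is the image of a compact set under the continuous multiplication map, hence compact.

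The substantive step is to show that $AB$ is closed when $A$ is closed and $B$ is compact; this is where I expect the main technicality. I would prove it by showing the complement is open. Fix $x \notin AB$, so that for every $b \in B$ we have $xb^{-1} \notin A$. Since the map $(g,h) \mapsto gh^{-1}$ is continuous and $G \setminus A$ is open, for each $b \in B$ there exist open neighbourhoods $V_b \ni x$ and $W_b \ni b$ with $V_b W_b^{-1} \subset G \setminus A$. The collection $\{W_b : b \in B\}$ is an open cover of the compact set $B$, so extract a finite subcover $W_{b_1}, \ldots, W_{b_n}$. Setting $V := V_{b_1} \cap \cdots \cap V_{b_n}$, an open neighbourhood of $x$, I claim $V \cap AB = \emptyset$: indeed, any $b \in B$ lies in some $W_{b_i}$, so for $v \in V$ one has $vb^{-1} \in V_{b_i} W_{b_i}^{-1} \subset G \setminus A$, whence $v \neq ab$ for all $a \in A$. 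Hence $G \setminus AB$ is open, so $AB$ is closed; the argument for $BA$ is entirely symmetric, swapping the roles of left and right translation.

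The main obstacle is purely the compact--closed case: the openness and compactness assertions are immediate, but closedness requires the interplay between a finite subcover of $B$ and the continuity of $\mu$ at each point of $B$. The argument above handles this uniformly by exploiting compactness of $B$ to pass from pointwise continuity estimates to a single neighbourhood of $x$ disjoint from $AB$.
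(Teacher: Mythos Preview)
Your proof is correct and is essentially the standard argument. Note, however, that the paper does not actually give a proof of this lemma: it is stated with a \textsf{qed} symbol and simply cited from Hewitt--Ross \cite[(4.4)]{hew-ross}. There is therefore no ``paper's own proof'' to compare against; your argument is a valid and self-contained verification of the cited fact.
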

\begin{lemma}\label{lem:open.subgrp}
Let $G$ be a topological group, and suppose that $U<H<G$ with $U$ open. Then $H$ is open and closed.
\end{lemma}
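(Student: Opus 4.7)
The plan is to first show that $H$ is open by exhibiting it as a union of translates of $U$, and then deduce that $H$ is closed by observing that its complement is a union of cosets, each of which is open by the same argument.

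For openness, since $U$ is a subgroup of $H$ (the notation $U<H$ from the paper permits equality but in any case means $U$ is a subgroup), we have $1\in U\subset H$, and in particular $U\subset H$ so $HU\subset H$, while conversely $H=H\cdot 1\subset HU$. Thus $H=HU$, which is open by \cref{lem:top.grp.prod} applied with $A=U$ open. An equivalent and perhaps more concrete way to phrase this is: for every $h\in H$, the set $hU$ is open (left-translation by $h$ is a homeomorphism of $G$, being continuous with continuous inverse built from \eqref{eq:mult} and \eqref{eq:inv}), contains $h$, and lies in $H$; so $H=\bigcup_{h\in H}hU$ is open.

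For closedness, I would use that the distinct left cosets $\{gH:g\in G\}$ partition $G$. Each coset $gH$ is the left-translate of the open set $H$ under the homeomorphism $x\mapsto gx$, hence is open. Therefore the complement $G\setminus H=\bigcup_{gH\ne H}gH$ is a union of open sets, so open, and $H$ is closed.

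Neither step looks like a genuine obstacle: both are standard one-line consequences of the fact that translations are homeomorphisms, together with \cref{lem:top.grp.prod}. The only mild subtlety is to notice that $1\in U$ (so that $U$ actually contains the identity and hence $H\subset HU$); this is automatic because $U$ is assumed to be a subgroup.
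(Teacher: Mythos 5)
Your proof is correct and follows essentially the same approach as the paper: both arguments rest on the fact that left-translation is a homeomorphism, so cosets of an open subgroup are open, and the complement of $H$ is a union of such open cosets. The only cosmetic difference is that the paper writes both $H$ and its complement as unions of left cosets of $U$ directly, whereas you first establish $H$ open via cosets of $U$ and then write the complement as a union of cosets of $H$; this is the same idea in two steps.
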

\begin{proof}
Both the subgroup $H$ and its complement are unions of left cosets of $U$, each of which is open by the continuity of \eqref{eq:mult}.
\end{proof}

Throughout the rest of this paper, given a connected, locally finite vertex-transitive graph $\Gamma$ we endow $\Aut(\Gamma)$ with the \emph{topology of pointwise convergence}. This is defined to be the topology with respect to which a sequence $(g_n)_{n=1}^\infty$ in $\Aut(\Gamma)$ converges to $g\in\Aut(\Gamma)$ if for every $x\in\Gamma$ there exists $n_x\in\N$ such that $g_n(x)=g(x)$ for every $n\ge n_x$. Equivalently, fixing an arbitrary element $e\in\Gamma$, the topology of pointwise convergence is the topology induced by the metric $d$ on $\Aut(\Gamma)$ defined via
\begin{equation}\label{eq:metrisable}
d(g,h)=2^{-\inf\{r\ge0\,:\,\exists\,x\in B(e,r)\text{ such that }g(x)\ne h(x)\}}.
\end{equation}
Another equivalent definition is as the topology with base consisting of those subsets $U_{X,f}\subset\Aut(\Gamma)$ indexed by finite sets $X\subset\Gamma$ and isometric embeddings $f:X\to\Gamma$, and defined via
\[
U_{X,f}=\{g\in\Aut(\Gamma):g(x)=f(x)\text{ for every }x\in X\}.
\]
Note that, since the topology of pointwise convergence is metrisable via \eqref{eq:metrisable}, a subset $A$ of $\Aut(\Gamma)$ is closed if and only if it contains the limits of all convergent sequences in $A$, and compact if and only if it is sequentially compact.

We leave it to the reader to check that $\Aut(\Gamma)$ is indeed a topological group when endowed with the topology of pointwise convergence. We also leave as an exercise the following fact, which we use implicitly throughout the rest of the paper, without further mention.

\begin{lemma}\label{lem:quot/subsp}
Let $\Gamma$ be a connected, locally finite vertex-transitive graph. Let $G,H<\Aut(\Gamma)$ be subgroups such that $G$ is closed and normalises $H$. Write $H'$ for the kernel of the induced action of $G$ on $\Gamma/H$. Then the quotient topology on $G_{\Gamma/H}$ viewed as $G/H'$ agrees with the subspace topology on $G_{\Gamma/H}$ arising from $\Aut(\Gamma/H)$.
\end{lemma}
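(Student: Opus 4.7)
The plan is to show that the bijection $\bar\varphi\colon G/H' \to G_{\Gamma/H}$ induced by $\varphi$ is a homeomorphism between the quotient topology on $G/H'$ and the subspace topology on $G_{\Gamma/H}$ inherited from $\Aut(\Gamma/H)$. First I would verify that $\varphi\colon G \to \Aut(\Gamma/H)$ is continuous at the identity: a basic open neighbourhood of the identity in $\Aut(\Gamma/H)$ has the form $V_F = \{\gamma : \gamma|_F = \id\}$ for finite $F \subset \Gamma/H$, and if $F_0 \subset \Gamma$ is any finite lift with $F = \{H(x) : x \in F_0\}$, then $\varphi^{-1}(V_F)$ contains the open subgroup of $G$ consisting of elements fixing $F_0$ pointwise. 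Continuity of $\varphi$ gives continuity of $\bar\varphi$ from the quotient topology, and hence the subspace topology on $G_{\Gamma/H}$ is no finer than the quotient topology.

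For the reverse direction I would exploit compactness of $G_e$. Local finiteness of $\Gamma$ ensures that $\Aut(\Gamma)_e$ embeds as a closed subgroup of the compact product $\prod_n \Sym(B(e,n))$, so $G_e = G \cap \Aut(\Gamma)_e$ is a compact open subgroup of $G$. The kernel $H'$ is closed in $G$, as the preimage of $\id$ under the continuous $\varphi$ into the Hausdorff group $\Aut(\Gamma/H)$, so $H'_e := H' \cap G_e$ is closed in $G_e$ and the quotient $G_e/H'_e$ is compact Hausdorff. The restriction of $\bar\varphi$ to $G_e/H'_e$ is a continuous bijection from this compact space onto its image $\varphi(G_e) \subset \Aut(\Gamma/H)$, and a continuous bijection from a compact space to a Hausdorff one is a homeomorphism.

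The last step is to identify $\varphi(G_e)$ as an open neighbourhood of the identity in both topologies, so that by translation-invariance of both group topologies on $G_{\Gamma/H}$ their agreement on this neighbourhood extends globally. The set $\pi(G_e) = G_e H'/H'$ is open in the quotient topology since $G_e$ is open in $G$ and $\pi$ is an open map; on the subspace side one aims for the identification $\varphi(G_e) = V_{\{H(e)\}} \cap G_{\Gamma/H}$, which uses the orbit equality $H(e) = H'(e)$ to write every $g \in G$ with $g(e) \in H(e)$ as $g_0 h'$ for some $g_0 \in G_e$ and $h' \in H'$. This orbit equality follows from Lemma \ref{lem:make.H.kernel} after reducing to the case where $H$ is a subgroup of $G$, and verifying this reduction is the main delicate point of the argument.
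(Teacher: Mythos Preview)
The paper leaves this lemma as an exercise and gives no proof of its own, so there is nothing to compare against; I can only assess your argument on its merits. Your overall strategy --- prove continuity of $\varphi$ for one inclusion, then use compactness of $G_e$ together with a compact-to-Hausdorff argument for the reverse --- is the natural one, and your first two steps are correct.

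The gap is in the last step. To conclude, you need $\varphi(G_e)$ to be open in the subspace topology, and you aim for the equality $\varphi(G_e)=(G_{\Gamma/H})_{H(e)}$, which unwinds to $G_{[e]}=G_eH'$ where $G_{[e]}=\{g\in G:g(H(e))=H(e)\}$. You propose to obtain this from the orbit equality $H(e)=H'(e)$ via \cref{lem:make.H.kernel}. But that lemma assumes $H\lhd G$, in particular $H\subset G$, which is not hypothesised here; and your suggested ``reduction to $H\subset G$'' would replace $H$ by a different subgroup, changing $\Gamma/H$ and hence $H'$ as well, so it is not clear what the reduction would even establish. This step is genuinely missing, and you flag it yourself as the delicate point without resolving it.

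A cleaner route avoids the orbit equality altogether: work with $G_{[e]}$ in place of $G_e$. By definition $\varphi(G_{[e]})=(G_{\Gamma/H})_{H(e)}$, so openness in the subspace topology is automatic. On the other side, since $G$ normalises $H$ one has $G_{[e]}=\{g\in G:g(e)\in H(e)\}=\bigcup_{y\in H(e)}G_{e\to y}$, which is open in $G$; and if $H(e)$ is finite (equivalently $H$ has compact closure, which holds in every use of the lemma in the paper) then $G_{[e]}$ is also compact, being a finite union of the compact sets $G_{e\to y}$. Your compact-to-Hausdorff argument then goes through verbatim with $G_{[e]}$ replacing $G_e$. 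Without some finiteness hypothesis on the $H$-orbits the argument would need further work, but the paper never invokes the lemma outside that setting.
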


Given a subgroup $G<\Aut(\Gamma)$ we define the set $G_{x\to y}$ via $G_{x\to y}=\{g\in G:g(x)=y\}$. Note that if $g\in G_{x\to y}$ then
\begin{equation}\label{eq:Gxy=g.Gx}
G_{x\to y}=gG_x.
\end{equation}

\begin{lemma}\label{lem:stab.comp}
Let $\Gamma$ be a connected, locally finite vertex-transitive graph, let $x,y\in\Gamma$, and let $G<\Aut(\Gamma)$ be a closed subgroup. Then $G_{x\to y}$ is compact Hausdorff.
\end{lemma}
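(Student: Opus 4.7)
The plan is first to show that the vertex stabiliser $G_x$ is compact, and then to deduce the statement for $G_{x\to y}$ via the identity $G_{x\to y}=gG_x$ from \eqref{eq:Gxy=g.Gx}. The Hausdorff property is immediate: the topology of pointwise convergence is metrisable by \eqref{eq:metrisable}, and any subspace of a metric space is Hausdorff.

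For the compactness of $G_x$, I would use metrisability to reduce to sequential compactness. Given a sequence $(g_n)$ in $G_x$, each $g_n$ is an isometry fixing $x$, so $g_n$ restricts to a bijection of the ball $B_\Gamma(x,r)$ for every $r\ge 0$. Because $\Gamma$ is connected and locally finite, each such ball is finite, so there are only finitely many candidate bijections $B_\Gamma(x,r)\to B_\Gamma(x,r)$. A standard diagonal extraction then produces a subsequence $(g_{n_k})$ that, for every $r$, eventually agrees on $B_\Gamma(x,r)$.

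Define $g\colon\Gamma\to\Gamma$ by $g(y)=g_{n_k}(y)$ for any sufficiently large $k$; this is well defined and, restricted to each $B_\Gamma(x,r)$, agrees with a bijection of that ball, so $g$ is a bijection of $\Gamma$. Since each $g_{n_k}$ preserves adjacency and adjacency is a finite-distance condition, $g$ preserves adjacency as well, so $g\in\Aut(\Gamma)$. By construction $g_{n_k}\to g$ pointwise, and $g(x)=x$. Because $G$ is closed we have $g\in G$, so $g\in G_x$. Thus $G_x$ is sequentially compact, hence compact.

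Finally, $G_{x\to y}$ is empty (hence trivially compact) unless some $g\in G$ satisfies $g(x)=y$. In that case, by \eqref{eq:Gxy=g.Gx} we have $G_{x\to y}=gG_x$, which is the image of the compact set $G_x$ under the continuous left-translation map $h\mapsto gh$ (continuous because $\Aut(\Gamma)$ is a topological group), and is therefore compact. I do not foresee a serious obstacle; the only point requiring care is verifying that the diagonal limit $g$ is a graph automorphism rather than merely a distance-preserving self-map, which is exactly where local finiteness is used.
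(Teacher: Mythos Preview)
Your proposal is correct and follows essentially the same route as the paper: both establish compactness of a stabiliser and then pass to $G_{x\to y}$ via the coset identity \eqref{eq:Gxy=g.Gx} and continuity of translation. The only organisational difference is that the paper first proves $\Aut(\Gamma)_x$ compact (citing Woess) and then passes to $G_{x\to y}$ as a closed subset of $\Aut(\Gamma)_{x\to y}$, whereas you work directly with $G_x$ and spell out the diagonal argument, invoking the closedness of $G$ at the limit step; these are equivalent.
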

\begin{proof}
This is essentially contained in \cite[\S2]{woess}. We first show that $\Aut(\Gamma)_{x\to y}$ is compact Hausdorff. Fix an element $g\in\Aut(\Gamma)_{x\to y}$ (which is non-empty by vertex transitivity). The fact that the stabiliser $\Aut(\Gamma)_x$ is compact Hausdorff is \cite[Lemma 1]{woess}. The fact that $\Aut(\Gamma)_{x\to y}$ is compact Hausdorff then follows from \eqref{eq:Gxy=g.Gx} and the continuity of the map \eqref{eq:mult}. Since $G_{x\to y}$ is a closed subset of $\Aut(\Gamma)_{x\to y}$, it is therefore also compact Hausdorff.
\end{proof}

\begin{prop}\label{prop:tot.disc}
Let $\Gamma$ be a connected, locally finite vertex-transitive graph, and let $G<\Aut(\Gamma)$ be a closed subgroup. Then $G$ is locally compact, Hausdorff, and totally disconnected (i.e.\ the only non-empty connected subsets are the singletons).
\end{prop}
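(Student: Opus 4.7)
The plan is to exhibit, for any fixed vertex $e\in\Gamma$, the stabiliser $G_e$ as a compact open (hence clopen) subgroup of $G$, and then derive all three conclusions from this single observation.

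First I would verify that $G_e$ is compact and open. Compactness is immediate from \cref{lem:stab.comp} applied with $x=y=e$. For openness, note that the basic open set $U_{\{e\},\,\id|_{\{e\}}}$ of $\Aut(\Gamma)$ is precisely $\Aut(\Gamma)_e$, so $G_e=G\cap U_{\{e\},\,\id|_{\{e\}}}$ is open in $G$. By \cref{lem:open.subgrp}, $G_e$ is then also closed in $G$. Exactly the same reasoning shows that $G_x$ is a compact clopen subgroup for every $x\in\Gamma$.

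Hausdorffness is then essentially free: $G$ inherits the metric \eqref{eq:metrisable} from $\Aut(\Gamma)$, so the subspace topology on $G$ is metrisable, hence Hausdorff. For local compactness, continuity of the multiplication \eqref{eq:mult} and the inversion \eqref{eq:inv} makes left translation a self-homeomorphism of $G$, so every coset $gG_e$ is a compact open neighbourhood of $g$, and local compactness at every point follows.

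Finally, total disconnectedness reduces to separating two arbitrary distinct elements $g\ne h$ of $G$ by a clopen set. Since $g$ and $h$ differ as automorphisms, there exists $x\in\Gamma$ with $g(x)\ne h(x)$. The coset $gG_x$, being a translate of the clopen subgroup $G_x$, is itself clopen; it contains $g$ but misses $h$. Hence no connected subset of $G$ can contain two distinct points, so the connected components of $G$ are singletons. I do not anticipate any real obstacle: every step follows directly from the explicit description of the basic open sets of the topology of pointwise convergence together with the lemmas already established in \cref{sec:top}.
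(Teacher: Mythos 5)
Your proof is correct and follows essentially the same route as the paper's: both use \cref{lem:stab.comp} for compactness of stabilisers, the observation that sets of the form $G_{x\to y}$ (which, as you note via \eqref{eq:Gxy=g.Gx}, are exactly the cosets $gG_x$) are open by definition of the topology, and the resulting compact clopen sets to establish local compactness and total disconnectedness. You spell out a few steps the paper treats as immediate (the openness of $G_x$ via the basic open sets $U_{X,f}$, and the translation argument for local compactness), but the substance is the same.
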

\begin{proof}
Since $G=\bigcup_{x,y\in\Gamma}G_{x\to y}$, the local compactness of $G$ is immediate from \cref{lem:stab.comp}. To see that $G$ is totally disconnected, suppose that $A\subset G$ contains two distinct elements $g,h$. This means that there exist $x,y\in\Gamma$ such that $g(x)=y\ne h(x)$. The set $G_{x\to y}$ is open in $G$ by definition, whilst $G\setminus G_{x\to y}$ is open in $G$ by \cref{lem:stab.comp}. Since $g\in G_{x\to y}$ and $h\in G\setminus G_{x\to y}$, this implies that $A$ is not connected.
\end{proof}

\begin{lemma}\label{lem:discrete}
Let $\Gamma$ be a non-empty, connected, locally finite vertex-transitive graph, and let $G<\Aut(\Gamma)$ be a closed subgroup. Then the following are equivalent.
\begin{enumerate}[label=(\roman*)]
\item\label{item:discrete}The group $G$ is discrete.
\item\label{item:all.stabs.finite}For every $x\in\Gamma$ the stabiliser $G_x$ is finite.
\item\label{item:stab.e.finite}There exists $x\in\Gamma$ such that $G_x$ is finite.
\end{enumerate}
\end{lemma}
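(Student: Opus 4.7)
The plan is to prove the implications (i) $\Rightarrow$ (ii) $\Rightarrow$ (iii) $\Rightarrow$ (ii) and (ii) $\Rightarrow$ (i); the implication (ii) $\Rightarrow$ (iii) is immediate from the non-emptiness of $\Gamma$.

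First I would handle (i) $\Rightarrow$ (ii). By \cref{lem:stab.comp} (taking $y=x$), the stabiliser $G_x$ is compact. If $G$ is discrete then so is its closed subgroup $G_x$, and a compact discrete space is finite.

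Next I would prove (iii) $\Rightarrow$ (ii), which is the part that does not use the topology at all and where the argument is slightly more subtle because $G$ is not assumed transitive. The key observation is that for any two adjacent vertices $x\sim z$ in $\Gamma$, the subgroup $G_x\cap G_z$ has finite index in both $G_x$ and $G_z$. Indeed, $G_x$ permutes the (finite) set of neighbours of $x$, and the stabiliser of $z$ under this action is precisely $G_x\cap G_z$; by orbit-stabiliser, $[G_x:G_x\cap G_z]\le\deg(x)<\infty$, and symmetrically $[G_z:G_x\cap G_z]\le\deg(z)<\infty$. Hence $G_x$ is finite if and only if $G_z$ is finite. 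Since $\Gamma$ is connected, finiteness of one stabiliser propagates along paths to all of $\Gamma$, giving (ii).

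Finally, for (ii) $\Rightarrow$ (i), fix $e\in\Gamma$ and for each $n\in\N$ consider the set
\[
U_n=\{g\in G:g(y)=y\text{ for every }y\in B_\Gamma(e,n)\},
\]
which is open in $G$ by the definition of the topology of pointwise convergence (it is the trace on $G$ of the basic open set $U_{B_\Gamma(e,n),\id}$). Each $U_n$ is contained in $G_e$, and the sequence $(U_n)$ is decreasing with $\bigcap_n U_n=\{\id\}$, since a graph automorphism fixing every vertex of the (connected) graph $\Gamma$ is the identity. By hypothesis $G_e$ is finite, so this nested intersection of subsets of a finite set must stabilise at $\{\id\}$ for some $n$. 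Thus $\{\id\}$ is open in $G$, which by homogeneity means $G$ is discrete.

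The only mildly non-routine step is the commensurability argument in (iii) $\Rightarrow$ (ii), since without the transitivity assumption one cannot appeal directly to \cref{lem:stab.normal}; the rest is a direct unpacking of the definitions combined with \cref{lem:stab.comp}.
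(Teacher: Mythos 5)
Your proof is correct, but it follows a genuinely different route from the paper's for the closing implication. The paper proves the cycle (ii)$\Rightarrow$(iii)$\Rightarrow$(i)$\Rightarrow$(ii), establishing (iii)$\Rightarrow$(i) directly: assuming $G_x$ is finite for some $x$, it shows that every convergent sequence in $G$ is eventually constant (because any convergent sequence eventually lies in the finite set $G_{x\to g(x)}=gG_x$), and then invokes the metrisability of the topology of pointwise convergence to conclude that every subset of $G$ is closed, hence $G$ is discrete. You instead prove (iii)$\Rightarrow$(ii) via the commensurability of adjacent-vertex stabilisers and connectedness, and then (ii)$\Rightarrow$(i) by observing that the pointwise stabilisers $U_n$ of the balls $B_\Gamma(e,n)$ form a decreasing chain of open subgroups of the finite group $G_e$ whose intersection is trivial, so some $U_n=\{\id\}$ is an open singleton. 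Both approaches are sound; yours avoids appealing to the sequential/metric characterisation of the topology and works directly with the basic open neighbourhoods of the identity, at the cost of an extra (but entirely standard) commensurability step to reduce (iii) to (ii). The paper's argument is shorter given that it already sets up the metrisability remark and \cref{lem:stab.comp}.
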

\begin{proof}
It is trivial that \ref{item:all.stabs.finite} implies \ref{item:stab.e.finite}. Lemma \ref{lem:stab.comp} implies that every stabiliser $G_x$ is compact, which means in particular that if $G$ is discrete then $G_x$ must be finite, and hence that \ref{item:discrete} implies \ref{item:all.stabs.finite}. Finally, suppose that \ref{item:stab.e.finite} holds, which is to say that there exists $x\in\Gamma$ such that $G_x$ is finite. We first claim that all convergent sequences in $G$ are eventually constant. Indeed, if the sequence $(g_n)_{n=1}^\infty\subset G$ satisfies $g_n\to g$ then by definition there exists $n_0\in\N$ such that $g_n\in G_{x\to g(x)}$ for every $n\ge n_0$. On the other hand, \eqref{eq:Gxy=g.Gx} and the finiteness of $G_x$ imply that $G_{x\to g(x)}$ is finite, and so the sequence $(g_n)_{n=1}^\infty$ takes only finitely many values, and so it must eventually be constant, as claimed. This implies that all subsets of $G$ are closed, and so $G$ is discrete and \ref{item:discrete} holds.
\end{proof}

\begin{lemma}\label{lem:comp.closure}
Let $\Gamma$ be a non-empty connected, locally finite vertex-transitive graph, and let $U\subset\Aut(\Gamma)$. Then the following are equivalent.
\begin{enumerate}[label=(\roman*)]
\item\label{item:comp.cl}The set $U$ has compact closure.
\item\label{item:forall}The orbit $U(x)$ is finite for every $x\in\Gamma$.
\item\label{item:exists}There exists some $x_0\in\Gamma$ such that the orbit $U(x_0)$ is finite.
\end{enumerate}
\end{lemma}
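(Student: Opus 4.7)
The plan is to prove the cycle of implications (i) $\Rightarrow$ (ii) $\Rightarrow$ (iii) $\Rightarrow$ (i). The middle implication is trivial, so the real content is the other two. The key auxiliary facts are that for each $x \in \Gamma$ the evaluation map $\text{ev}_x : \Aut(\Gamma) \to \Gamma$ defined by $g \mapsto g(x)$ is continuous when $\Gamma$ carries the discrete topology, and that $\Aut(\Gamma)_{x \to y}$ is compact Hausdorff by \cref{lem:stab.comp} (applied with $G = \Aut(\Gamma)$, which is trivially closed in itself).

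For (i) $\Rightarrow$ (ii), I would argue that continuity of $\text{ev}_x$ follows immediately from the definition of the topology of pointwise convergence: the preimage of each singleton $\{y\} \subset \Gamma$ is one of the basic open sets $U_{\{x\}, f}$ with $f(x) = y$, and sets of the form $\{g : g(x) = y\}$ also form a partition into open-and-closed pieces. Given compact closure of $U$, the image $\text{ev}_x(\overline U) = \overline U(x) \supset U(x)$ is then compact in a discrete space, hence finite.

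For (iii) $\Rightarrow$ (i), enumerate $U(x_0) = \{y_1, \ldots, y_k\}$. Every $g \in U$ sends $x_0$ to some $y_i$, so
\[
U \ \subset\ \bigcup_{i=1}^k \Aut(\Gamma)_{x_0 \to y_i}.
\]
By \cref{lem:stab.comp} each term on the right is compact, so their finite union $K$ is compact; and since $\Aut(\Gamma)$ is Hausdorff by \cref{prop:tot.disc} (again applied with $G = \Aut(\Gamma)$), $K$ is also closed. Hence $\overline U \subset K$, and as a closed subset of a compact Hausdorff space $\overline U$ is itself compact.

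I do not anticipate a real obstacle here — the result is essentially a straightforward packaging of facts already established in \cref{sec:top}. The only minor point to be careful about is that $U$ is only assumed to be a subset of $\Aut(\Gamma)$ and not a subgroup, so one must not invoke anything that relies on group structure; fortunately neither direction needs it. The argument reduces compactness of $\overline U$ to finiteness of a single orbit via the stabiliser decomposition $\Aut(\Gamma)_{x_0 \to y} = g \Aut(\Gamma)_{x_0}$ (equation \eqref{eq:Gxy=g.Gx}) which is the content already encoded in \cref{lem:stab.comp}.
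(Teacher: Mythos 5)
Your proof is correct, but it follows a different route from the paper's. The paper simply cites \cite[Lemma 2]{woess} for the equivalence of \ref{item:comp.cl} and \ref{item:forall}, observes that \ref{item:forall}~$\Rightarrow$~\ref{item:exists} is trivial, and then deduces \ref{item:exists}~$\Rightarrow$~\ref{item:forall} directly from local finiteness of $\Gamma$: if $U(x_0)$ is finite and $d(x_0,y)=n$, then $U(y)\subset\bigcup_{z\in U(x_0)}B_\Gamma(z,n)$, which is finite. Your argument instead closes the cycle \ref{item:comp.cl}~$\Rightarrow$~\ref{item:forall}~$\Rightarrow$~\ref{item:exists}~$\Rightarrow$~\ref{item:comp.cl}, making \ref{item:comp.cl}~$\Rightarrow$~\ref{item:forall} explicit via continuity of the evaluation map and establishing \ref{item:exists}~$\Rightarrow$~\ref{item:comp.cl} by covering $U$ with the finitely many compact cosets $\Aut(\Gamma)_{x_0\to y_i}$ supplied by \cref{lem:stab.comp}. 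Both arguments ultimately lean on the same foundational fact (compactness of vertex stabilisers, which is where local finiteness enters via Woess's Lemma~1), but your version is more self-contained, avoids citing Woess's Lemma~2 wholesale, and has the mild advantage of never needing to prove \ref{item:exists}~$\Rightarrow$~\ref{item:forall} separately; the paper's version is shorter because it outsources the harder direction. Your steps are all sound, including the observation that $U$ need only be a subset and not a subgroup, and the use of Hausdorffness to pass from compactness of the covering set to compactness of $\overline U$.
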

\begin{proof}
The equivalence of \ref{item:comp.cl} and \ref{item:forall} is \cite[Lemma 2]{woess}, and \ref{item:forall} trivially implies \ref{item:exists}. As remarked in \cite[\S2]{woess}, the fact that \ref{item:exists} implies \ref{item:forall} is immediate from the local finiteness of $\Gamma$.
\end{proof}

A \emph{Haar measure} $\mu$ on a locally compact topological group $G$ is a certain measure on the Borel sigma algebra of $G$, the properties of which include that
\begin{enumerate}[label=(\arabic*)]
\item$\mu(V)<\infty$ if $V$ is compact,
\item$\mu(U)>0$ if $U$ is open and nonempty, and
\item$\mu(gA)=\mu(A)$ for every Borel set $A\subset G$ and every $g\in G$.
\end{enumerate}
See \cite[\S15]{hew-ross} for a detailed introduction to Haar measures. Every locally compact $T_0$ topological group admits a Haar measure \cite[(15.5) \& (15.8)]{hew-ross}. In particular, \cref{prop:tot.disc} implies that if $\Gamma$ is a connected, locally finite vertex-transitive graph and $G<\Aut(\Gamma)$ is a closed subgroup then $G$ admits a Haar measure.

\begin{lemma}\label{cor:growth.inherit}
Let $\Gamma$ be a connected, locally finite vertex-transitive graph with distinguished vertex $e$, let $G<\Aut(\Gamma)$ be a closed subgroup, and set $S=\{g\in G:d(g(e),e)\le1\}$. Then for each $n\in\N$ the set $S^n$ is an open compact generating set for $G$ containing the identity. Moreover, if $\mu$ is a Haar measure on $G$ then $\mu(S^n)=\mu(G_e)\beta_\Gamma(n)$ for every $n\in\N$.
\end{lemma}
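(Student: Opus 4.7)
The plan is to verify each clause by assembling standard facts about the topology of pointwise convergence together with the coset decomposition supplied by \cref{cor:S^n.cosets}. First, $1\in S$ since $d(1(e),e)=0$, so $1\in S^n$; the symmetry of $S$ and the fact that $S$—and hence $S^n$—generates $G$ follow from \cref{lem:cay-ab.gen.set} (transitivity of $G$ is implicit here, as in the preceding lemmas of this section).

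For openness, I would observe that for each $y\in\Gamma$ the set $\{g\in\Aut(\Gamma):g(e)=y\}$ is a basic open set of the pointwise-convergence topology (it is $U_{\{e\},f}$ with $f(e)=y$), so its intersection $G_{e\to y}$ with $G$ is open in $G$. Since $B_\Gamma(e,1)$ is finite by local finiteness of $\Gamma$, the decomposition
\[
S=\bigcup_{y\in B_\Gamma(e,1)}G_{e\to y}
\]
exhibits $S$ as a finite union of open sets, hence open; \cref{lem:top.grp.prod} then propagates openness to every $S^n$. For compactness, the same decomposition writes $S$ as a finite union of the sets $G_{e\to y}$, each of which is compact by \cref{lem:stab.comp}, so $S$ is compact; \cref{lem:top.grp.prod} again gives compactness of $S^n$ inductively.

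For the Haar-measure identity, \cref{cor:S^n.cosets} gives $S^n=\bigcup_{x\in B_\Gamma(e,n)}g_xG_e$, and this is in fact a disjoint union of left cosets of $G_e$: indeed $g_xG_e=g_{x'}G_e$ forces $g_{x'}^{-1}g_x(e)=e$, i.e. $x=x'$. Left-invariance of $\mu$ then yields
\[
\mu(S^n)=\sum_{x\in B_\Gamma(e,n)}\mu(g_xG_e)=\beta_\Gamma(n)\,\mu(G_e),
\]
as required.

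I do not expect a genuine obstacle; the subtlest point is recognising that $G_{e\to y}$ is actually open (and not merely closed) in $G$, which comes from its being one of the defining basic neighbourhoods of the pointwise-convergence topology rather than from anything deeper. The rest is bookkeeping combining \cref{lem:top.grp.prod}, \cref{lem:stab.comp}, \cref{lem:cay-ab.gen.set} and \cref{cor:S^n.cosets}.
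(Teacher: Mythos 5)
Your proposal is correct and matches the paper's approach: establish that $S$ is open and compact, propagate through $S^n$ via \cref{lem:top.grp.prod}, and obtain the Haar-measure identity from the coset decomposition in \cref{cor:S^n.cosets}. The one minor divergence is in proving compactness of $S$: you write $S$ as a finite union of the sets $G_{e\to y}$ for $y\in B_\Gamma(e,1)$ and invoke \cref{lem:stab.comp}, whereas the paper observes that $S$ is closed and that $S(e)\subset B_\Gamma(e,1)$ is finite, then applies \cref{lem:comp.closure}; these are interchangeable, and your version has the small advantage of reusing the same coset decomposition that drives the openness and measure computations. You are also right to flag that the statement, as printed, omits the hypothesis that $G$ is transitive (needed both for $S$ to generate $G$ via \cref{lem:cay-ab.gen.set} and for \cref{cor:S^n.cosets} to apply); this is implicit in the paper and in all the applications.
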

\begin{proof}
The set $S$ is trivially open and closed, and hence compact by \cref{lem:comp.closure}. \cref{lem:top.grp.prod} therefore implies that $S^n$ is open and compact for every $n\in\N$, and \cref{cor:S^n.cosets} then implies that $\mu(S^n)=\mu(G_e)\beta_\Gamma(n)$.
\end{proof}

\section{Quasi-isometries}

In this section we briefly record the basic properties of quasi-isometries that we will need. We start by recording the following standard general lemma.
\begin{lemma}\label{lem:QI.elem}
Let $C,D\ge1$ and $K,L\ge0$. Let $X,Y,Z$ be metric spaces, and suppose that $f:X\to Y$ is a $(C,K)$-quasi-isometry and $g:Y\to Z$ is a $(D,L)$-quasi-isometry. Then the following hold.
\begin{enumerate}[label=(\roman*)]
\item The composition $g\circ f:X\to Z$ is a $(CD,DK+2L)$-quasi isometry.
\item Define a map $\hat{f}:Y\to X$ by picking, for each $y\in Y$, an arbitrary element $\nu(y)\in f(X)$ of distance at most $K$ from $y$, and then setting $\hat{f}(y)$ to be an arbitrary element of $f^{-1}(\nu(y))$. Then $\hat{f}$ is a $(C,3CK)$-quasi-isometry.
\end{enumerate} 
\end{lemma}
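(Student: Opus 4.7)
The plan is to verify both parts by direct unpacking of the quasi-isometry inequalities; there is no conceptual obstacle, only some bookkeeping of constants.

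For part (i), I would chain the two bi-Lipschitz-type inequalities. For any $x,x'\in X$, the upper bound on $f$ gives $d(f(x),f(x'))\le C d(x,x')+K$, then feeding this into the upper bound on $g$ yields
\[
d(g(f(x)),g(f(x')))\le D(Cd(x,x')+K)+L=CD\,d(x,x')+DK+L,
\]
which is at most $CD\,d(x,x')+DK+2L$. For the matching lower bound, the lower bound on $g$ followed by that on $f$ gives $d(g(f(x)),g(f(x')))\ge D^{-1}C^{-1}d(x,x')-D^{-1}K-L$; using $D\ge1$ this is at least $(CD)^{-1}d(x,x')-(DK+2L)$. Finally, for the density condition, given $z\in Z$ pick $y\in Y$ with $d(z,g(y))\le L$ and then $x\in X$ with $d(y,f(x))\le K$; the upper bound on $g$ and the triangle inequality give $d(z,g(f(x)))\le L+(DK+L)=DK+2L$.

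For part (ii), the crucial identity is $f(\hat f(y))=\nu(y)$, with $d(y,\nu(y))\le K$ by construction. Thus for any $y,y'\in Y$, the triangle inequality yields
\[
\bigl|d(\nu(y),\nu(y'))-d(y,y')\bigr|\le 2K.
\]
Applying the two-sided quasi-isometry bound for $f$ at the preimages $\hat f(y),\hat f(y')$ gives
\[
C^{-1}d(\hat f(y),\hat f(y'))-K\;\le\;d(\nu(y),\nu(y'))\;\le\;Cd(\hat f(y),\hat f(y'))+K,
\]
and combining these with the previous display produces $d(\hat f(y),\hat f(y'))\le Cd(y,y')+3CK$ and $d(\hat f(y),\hat f(y'))\ge C^{-1}d(y,y')-3K$, the latter of which is at least $C^{-1}d(y,y')-3CK$ since $C\ge1$. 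The density condition is the shortest step: given $x\in X$, set $y=f(x)$; then $d(f(x),f(\hat f(y)))=d(f(x),\nu(f(x)))\le K$, and the lower bound on $f$ turns this into $d(x,\hat f(y))\le 2CK\le 3CK$.

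The only mildly subtle point is remembering to absorb asymmetric error terms (such as $D^{-1}K+L$ versus $DK+L$, or $3K/C$ versus $3CK$) into the stated symmetric constants by using $C,D\ge1$; beyond that, the lemma is entirely mechanical and I would expect no real obstacle.
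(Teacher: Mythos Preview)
Your proposal is correct and is exactly the direct calculation the paper has in mind; the paper itself simply writes ``These statements are easily verified by direct calculation'' without giving details. Your bookkeeping of the constants, including the absorptions using $C,D\ge1$, is accurate throughout.
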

\begin{proof}
These statements are easily verified by direct calculation.
\end{proof}

We now come onto two facts about quasi-isometries that are specific to the setting of this paper.
\begin{lemma}\label{lem:quotient.QI}
Let $\Gamma$ be a connected locally finite vertex-transitive graph with a distinguished vertex $e$, let $G<\Aut(\Gamma)$ be a transitive subgroup, and let $H\lhd G$. Write $S=\{g\in G:d(g(e),e)\le1\}$, let $k\in\N$, and suppose that $H\subset S^k$. Then the quotient map $\psi:\Gamma\to\Gamma/H$ is a $(1,k)$-quasi-isometry.
\end{lemma}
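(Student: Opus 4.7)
The plan is to establish both sides of the quasi-isometry inequality directly, then observe that surjectivity of $\psi$ makes the density condition trivial. The upper bound $d_{\Gamma/H}(\psi(x),\psi(y))\le d_\Gamma(x,y)$ should follow immediately from the definition of the quotient graph: an edge $u\sim v$ in $\Gamma$ descends to a (possibly trivial) edge $H(u)\sim H(v)$ in $\Gamma/H$, so any path in $\Gamma$ of length $n$ from $x$ to $y$ projects to a walk of length at most $n$ from $H(x)$ to $H(y)$.

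For the lower bound, I would first reduce to the case $x=e$ by exploiting the transitivity of $G$ and the normality of $H$. Specifically, pick $g\in G$ with $g(x)=e$; then $g$ is an isometry of $\Gamma$, and since $H\lhd G$ the map $g$ descends (via \cref{lem:G/H',lem:H.inv.for.G}) to an isometry of $\Gamma/H$ sending $H(x)$ to $H(e)$. So it suffices to prove $d_\Gamma(e,y)\le d_{\Gamma/H}(H(e),H(y))+k$ for arbitrary $y\in\Gamma$.

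This is where the hypothesis $H\subset S^k$ enters, via \cref{lem:ball.preimage}. Setting $m=d_{\Gamma/H}(H(e),H(y))$, we have $H(y)\in B_{\Gamma/H}(H(e),m)$, so $y\in\psi^{-1}(B_{\Gamma/H}(H(e),m))\subset B_\Gamma(e,m+k)$ by \cref{lem:ball.preimage}, giving $d_\Gamma(e,y)\le m+k$, as required. Finally, the density condition in the definition of a quasi-isometry is automatic since $\psi$ is surjective (every vertex of $\Gamma/H$ is of the form $H(x)=\psi(x)$ for some $x\in\Gamma$). There is no real obstacle here: once one has \cref{lem:ball.preimage} in hand, the argument is essentially a bookkeeping exercise, and the only mild subtlety is remembering to use transitivity of $G$ together with the normality of $H$ to translate the ball estimate from $e$ to an arbitrary basepoint.
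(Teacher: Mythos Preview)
Your proposal is correct and follows essentially the same approach as the paper: both arguments deduce the lower bound from \cref{lem:ball.preimage}, with the upper bound and surjectivity handled trivially. The only difference is that you spell out the reduction to the basepoint $x=e$ via transitivity and normality, whereas the paper simply invokes the ball inclusion at an arbitrary basepoint $x$ as ``a consequence of \cref{lem:ball.preimage}'' without making that translation step explicit.
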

\begin{proof}
The map $\psi$ is surjective and satisfies $d(\psi(x),\psi(y))\le d(x,y)$ for every $x,y\in\Gamma$, so it remains to show that $d(\psi(x),\psi(y))\ge d(x,y)-k$ for every $x,y\in\Gamma$. However, this follows from the fact that $\psi^{-1}(B_{\Gamma/H}(\psi(x),d(x,y)-k-1)\subset B_\Gamma(x,d(x,y)-1)$, which is a consequence of \cref{lem:ball.preimage}.
\end{proof}

\begin{lemma}\label{lem:aut.QI}
Let $\Gamma$ be a connected locally finite vertex-transitive graph with a distinguished vertex $e$, and let $G<\Aut(\Gamma)$ be a transitive subgroup. Write $S=\{g\in G:d(g(e),e)\le1\}$. Then the map
\[
\begin{array}{ccccc}
f&:&\Cay(G,S)&\to&\Gamma\\
 &&g&\mapsto&g(e)
\end{array}
\]
is a $(1,1)$-quasi-isometry.
\end{lemma}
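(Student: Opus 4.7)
The plan is to verify the two halves of the quasi-isometry definition: the density of the image and the metric distortion bound. Density is immediate, since the transitivity of $G$ on $\Gamma$ makes $f$ surjective; in particular the $K=1$ density requirement is trivially satisfied. For the distortion bound, since each $g\in G$ acts by an isometry of $\Gamma$, one has
\[
d_\Gamma(f(g),f(h)) = d_\Gamma(g(e),h(e)) = d_\Gamma(e,g^{-1}h(e)),
\]
so the task reduces to comparing $d_\Gamma(e,x(e))$ with $d_S(1,x)$ for a general element $x=g^{-1}h\in G$.

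The workhorse is \cref{lem:cay-ab.gen.set}, which says that for every $n\ge 1$ one has $x(e)\in B_\Gamma(e,n)$ if and only if $x\in S^n$. Taking $n=d_S(1,x)$ (and noting that $g=h$ is trivial) gives $d_\Gamma(e,x(e))\le d_S(1,x)$, hence $d_\Gamma(f(g),f(h))\le d_S(g,h)$. Conversely, when $m:=d_\Gamma(e,x(e))\ge 1$ the lemma forces $x\in S^m$, so $d_S(1,x)\le m$. The one subtlety --- and the main (though mild) obstacle --- is the edge case $m=0$: distinct elements of the stabiliser $G_e$ collapse to a single point under $f$, and \cref{lem:cay-ab.gen.set} does not cover $n=0$. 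This is handled directly: any $x\in G_e$ satisfies $d_\Gamma(x(e),e)=0\le 1$, so $x\in S$, whence $d_S(1,x)\le 1$. Combining these bounds yields $|d_S(g,h)-d_\Gamma(f(g),f(h))|\le 1$, and this additive slack, caused precisely by nontrivial vertex stabilisers collapsing distinct Cayley-graph vertices to the same orbit point, is exactly what the $K=1$ in the conclusion accounts for.
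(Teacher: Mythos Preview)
Your proof is correct and follows essentially the same approach as the paper: surjectivity from transitivity, the metric comparison via \cref{lem:cay-ab.gen.set}, and the separate treatment of the stabiliser case $x\in G_e$ giving the additive error of $1$. The paper's version is more terse---it simply notes that $d_\Gamma(f(g),f(h))=d_S(g,h)$ exactly when $g(e)\ne h(e)$, and that $d_S(g,h)=1$ when $g\ne h$ but $g(e)=h(e)$---but the content is identical.
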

\begin{proof}
Let $g,h\in G$ with $g\ne h$. If $g(e)\ne h(e)$ then $d(f(g),f(h))=d(g^{-1}h(e),e)=d(g,h)$ by \cref{lem:cay-ab.gen.set}, whilst if $g(e)=h(e)$ then $d(f(g),f(h))=0$ and $d(g,h)=1$ by definition. The map $f$ is surjective by transitivity, and so the lemma is proved.
\end{proof}
\begin{remarks*}\mbox{}
\begin{enumerate}[label=(\arabic*)]
\item \cref{lem:aut.QI} shows that a connected, locally finite vertex-transitive graph is always quasi-isometric to some Cayley graph, just not necessarily a locally finite one. The point about \cref{thm:trof.geom,thm:main.geom}, of course, is that in each case $\Gamma$ is quasi-isometric to a locally finite Cayley graph.
\item The graph $\Gamma$ appearing in \cref{lem:aut.QI} can be viewed as a certain quotient of $\Cay(G,S)$ called a \emph{Cayley--Abels graph} (see \cite[\S2.3]{bft} for details), and \cref{lem:aut.QI} is then actually a special case of \cref{lem:quotient.QI}. 
\end{enumerate}
\end{remarks*}

\section{Approximate groups}
Approximate groups are, roughly speaking, subsets of groups that are `approximately closed' under the group operation. Precisely, given $K\ge1$, a subset $A$ of a group $G$ is said to be a \emph{$K$-approximate subgroup of $G$}, or simply a \emph{$K$-approximate group}, if it is symmetric and contains the identity and there exists a set $X\subset G$ of cardinality at most $K$ such that $A^2\subset XA$. The reader will find a detailed introduction to approximate groups, including detailed motivation for this definition, in \cite{book}, but the immediate relevance to \cref{thm:main} comes from the following result.
\begin{prop}\label{prop:tripling}
Let $A$ be a symmetric open precompact set in a locally compact group $G$ with a Haar measure $\mu$. Suppose that $\mu(A^3)\le K\mu(A)$. Then $A^2$ is an open precompact $K^3$-approximate group.
\end{prop}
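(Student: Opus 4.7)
The plan is to split the proof into an easy ``structural'' part and a more substantive covering argument. For the structural part, openness of $A^2$ follows from \cref{lem:top.grp.prod}, and precompactness from the inclusion $\overline{A^2}\subset\overline{A}\cdot\overline{A}$ (the right-hand side being compact by \cref{lem:top.grp.prod} and hence closed, as $G$ is Hausdorff). Symmetry is immediate from $(A^2)^{-1}=A^{-1}A^{-1}=A^2$, and the identity lies in $A^2$ because $A$ is nonempty and symmetric, so $aa^{-1}\in A^2$ for any $a\in A$.

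The main work is to exhibit a set $X$ with $|X|\le K^3$ and $A^4\subset XA^2$. For this I would use the standard Ruzsa-style covering argument: let $X\subset A^4$ be a maximal set for which the translates $\{xA:x\in X\}$ are pairwise disjoint. By maximality, every $z\in A^4$ satisfies $zA\cap xA\ne\emptyset$ for some $x\in X$, hence $z\in xAA^{-1}=xA^2$ and thus $A^4\subset XA^2$. The sets $xA$ are pairwise disjoint open subsets of $A^5$, so openness of $A$ (giving $\mu(A)>0$) and precompactness of $A^5$ (giving $\mu(A^5)<\infty$) together yield $|X|\mu(A)\le\mu(A^5)$; in particular $X$ is finite.

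The remaining step is to show $\mu(A^5)\le K^3\mu(A)$, and here I would apply the Ruzsa triangle inequality for Haar measure, $\mu(Y)\mu(XZ^{-1})\le\mu(XY^{-1})\mu(YZ^{-1})$, twice. First, with $X=A^2$, $Y=A$, $Z=A^2$ (and using $A^{-1}=A$) it produces $\mu(A)\mu(A^4)\le\mu(A^3)^2\le K^2\mu(A)^2$, i.e.\ $\mu(A^4)\le K^2\mu(A)$; then with $X=A^2$, $Y=A$, $Z=A^3$ it gives $\mu(A)\mu(A^5)\le\mu(A^3)\mu(A^4)\le K^3\mu(A)^2$, i.e.\ $\mu(A^5)\le K^3\mu(A)$. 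Combined with the bound $|X|\mu(A)\le\mu(A^5)$ from the previous paragraph, this yields $|X|\le K^3$, so $A^2$ is a $K^3$-approximate group.

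The only real obstacle I foresee is the appeal to the Ruzsa triangle inequality in the Haar-measure setting: the discrete version is classical but its topological-group analogue, while standard in the approximate-group literature, requires a (short) Fubini-type verification. I would either cite it or give a one-line justification in the proof; once it is in hand, every other step is a direct calculation.
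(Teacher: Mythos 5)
Your proof is correct and follows essentially the same route as the paper: Ruzsa's triangle inequality in the Haar-measure setting (which the paper establishes as \cref{lem:triangle}) is used, via \cref{lem:higher.prods}, to get $\mu(A^5)\le K^3\mu(A)$, and this feeds into a Ruzsa-type covering of $A^4$ by left translates of $A^2$. The only material difference is that you carry out the covering step explicitly, whereas the paper delegates it to a citation of Carolino's thesis.
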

\cref{prop:tripling} is essentially due to Ruzsa in the discrete setting. Tao \cite{tao.product.set} showed that the argument generalises to the setting of a locally compact group with a Haar measure. There is a global assumption in Tao's paper that the Haar measure is bi-invariant, which is not necessarily the case in the present paper; to reassure the reader that the relevant arguments do not require that assumption we reproduce them here.
\begin{lemma}[Ruzsa's triangle inequality {\cite[Lemma 3.2]{tao.product.set}}]\label{lem:triangle}
Let $U,V,W$ be open precompact sets in a locally compact group $G$ with a Haar measure $\mu$. Then $\mu(UW^{-1})\mu(V^{-1})\le\mu(UV^{-1})\mu(VW^{-1})$.
\end{lemma}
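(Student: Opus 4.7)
The plan is to deduce the inequality from a Tonelli argument combined with a single change of variables that uses only the left-invariance of $\mu$. Starting from
\begin{equation*}
\mu(UV^{-1})\mu(VW^{-1})=\int\int 1_{UV^{-1}}(a)\,1_{VW^{-1}}(b)\,d\mu(a)\,d\mu(b),
\end{equation*}
for each fixed $a$ I substitute $b=a^{-1}x$; since $x\mapsto a^{-1}x$ is a left translation this preserves $\mu$, so inserting the additional (non-negative) indicator $1_{UW^{-1}}(x)$ yields the lower bound
\begin{equation*}
\mu(UV^{-1})\mu(VW^{-1})\ge\int\int 1_{UV^{-1}}(a)\,1_{VW^{-1}}(a^{-1}x)\,1_{UW^{-1}}(x)\,d\mu(a)\,d\mu(x).
\end{equation*}

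The key geometric input is a lower bound, for each $x\in UW^{-1}$, on the $a$-slice $S_x:=\{a\in UV^{-1}:a^{-1}x\in VW^{-1}\}$. Fixing any factorisation $x=uw^{-1}$ with $u\in U$, $w\in W$, I observe that for every $v\in V$ the element $uv^{-1}$ lies in $UV^{-1}$ and satisfies $(uv^{-1})^{-1}x=vw^{-1}\in VW^{-1}$. Thus $S_x$ contains the left translate $uV^{-1}$, which has measure $\mu(V^{-1})$ by left-invariance. Integrating this pointwise bound over $x\in UW^{-1}$ via Tonelli then shows that the double integral in the previous display is at least $\mu(UW^{-1})\mu(V^{-1})$, which chained with the preceding inequality yields the desired conclusion.

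The only point requiring care is that every invariance invocation is genuinely a left translation: the substitution $x\mapsto a^{-1}x$ is translation by $a^{-1}$ on the left, and $V^{-1}\mapsto uV^{-1}$ is translation by $u$ on the left. Thus nothing in the argument relies on right- or bi-invariance of $\mu$, which is the reason Tao's proof adapts verbatim to the possibly non-unimodular setting. I do not expect any serious obstacle here; the main thing to be vigilant about is consistently avoiding right translations, since $\mu(V)$ and $\mu(V^{-1})$ may genuinely differ when $G$ is not unimodular, and the statement of the lemma is sensitive to exactly this asymmetry.
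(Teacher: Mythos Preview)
Your argument is correct and is essentially identical to the paper's proof: after the change of variable $b\mapsto x=ab$ you arrive at precisely the double integral $\int_G\int_G 1_{UV^{-1}}(a)\,1_{VW^{-1}}(a^{-1}x)\,d\mu(a)\,d\mu(x)$ that the paper obtains, and your lower bound on the slice $S_x$ via $uV^{-1}$ is exactly the paper's replacement of $1_{UV^{-1}}$ by $1_{u_yV^{-1}}$. The only cosmetic difference is the order in which the steps are presented; the use of left-invariance is the same in both.
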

\begin{proof}
We follow Tao in noting that
\begin{align*}
\mu(UV^{-1})\mu(VW^{-1})&=\int_G1_{UV^{-1}}(x)\mu(VW^{-1})\diff\mu(x)\\
   &=\int_G1_{UV^{-1}}(x)\mu(xVW^{-1})\diff\mu(x)\\
   &=\int_G\int_G1_{UV^{-1}}(x)1_{VW^{-1}}(x^{-1}y)\diff\mu(x)\diff\mu(y)&\text{(by Fubini's theorem)}\\
   &\ge\int_{UW^{-1}}\int_G1_{UV^{-1}}(x)1_{VW^{-1}}(x^{-1}y)\diff\mu(x)\diff\mu(y).
\end{align*}
Picking, for each $y\in UW^{-1}$, elements $u_y\in U$ and $w_y\in W$ such that $y=u_yw_y^{-1}$, we therefore have
\begin{align*}
\mu(UV^{-1})\mu(VW^{-1})&\ge\int_{UW^{-1}}\int_G1_{UV^{-1}}(x)1_{VW^{-1}}(x^{-1}u_yw_y^{-1})\diff\mu(x)\diff\mu(y)\\
   &\ge\int_{UW^{-1}}\int_G1_{u_yV^{-1}}(x)1_{VW^{-1}}(x^{-1}u_yw_y^{-1})\diff\mu(x)\diff\mu(y)\\
   &=\int_{UW^{-1}}\int_G1_{u_yV^{-1}}(x)\diff\mu(x)\diff\mu(y)\\
   &=\mu(UW^{-1})\mu(V^{-1}),
\end{align*}
as required.
\end{proof}
\begin{lemma}\label{lem:higher.prods}
Let $A$ be a symmetric open precompact set in a locally compact group $G$ with a Haar measure $\mu$. Suppose that $\mu(A^3)\le K\mu(A)$. Then $\mu(A^m)\le K^{m-2}\mu(A)$ for every $m\ge3$.
\end{lemma}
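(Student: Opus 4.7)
The plan is to induct on $m \ge 3$, with the base case $m=3$ being exactly the hypothesis. For the inductive step, assume $\mu(A^{m-1}) \le K^{m-3}\mu(A)$ and apply Ruzsa's triangle inequality (\cref{lem:triangle}) with the choice $U = A^2$, $V = A$, $W = A^{m-2}$. Since $A$ is symmetric we have $W^{-1} = A^{m-2}$ and $V^{-1} = A$, so that $UW^{-1} = A^m$, $UV^{-1} = A^3$, and $VW^{-1} = A^{m-1}$. The triangle inequality therefore yields
\[
\mu(A^m)\mu(A) \le \mu(A^3)\mu(A^{m-1}).
\]

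We may assume $A$ is non-empty (otherwise the conclusion is trivial), so $\mu(A) > 0$ by the defining properties of Haar measure on an open set. Dividing by $\mu(A)$ and substituting the hypothesis $\mu(A^3) \le K\mu(A)$ together with the inductive assumption gives
\[
\mu(A^m) \le \frac{\mu(A^3)\mu(A^{m-1})}{\mu(A)} \le \frac{K\mu(A)\cdot K^{m-3}\mu(A)}{\mu(A)} = K^{m-2}\mu(A),
\]
closing the induction.

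The only side check is that $U, V, W$ meet the hypotheses of \cref{lem:triangle}, namely that they are open and precompact; this is immediate from \cref{lem:top.grp.prod} since each is a finite product of copies of $A$, and $A$ is open and precompact by assumption. I do not expect any real obstacle: this is Ruzsa's classical Pl\"unnecke--Ruzsa-type induction, made to work in the locally compact setting precisely because the triangle inequality of \cref{lem:triangle} does not require bi-invariance of the Haar measure.
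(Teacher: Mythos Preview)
Your argument is correct and is essentially identical to the paper's proof: both induct on $m$, using the $m=3$ case as the base and applying \cref{lem:triangle} with $U=A^2$, $V=A$, $W=A^{m-2}$ for the inductive step. Your additional remarks (that $\mu(A)>0$ and that $U,V,W$ are open precompact by \cref{lem:top.grp.prod}) simply make explicit details the paper leaves implicit.
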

\begin{proof}
Given $m\ge4$, applying \cref{lem:triangle} with $U=A^2$, $V=A$ and $W=A^{m-2}$ implies that $\mu(A^m)\mu(A)\le\mu(A^3)\mu(A^{m-1})$, from which the desired result follows by induction and the $m=3$ case.
\end{proof}

\begin{proof}[Proof of \cref{prop:tripling}]
\cref{lem:top.grp.prod} implies that $A^2$ is open and precompact. \cref{lem:higher.prods} implies that $\mu(A^5)\le K^3\mu(A)$, and then \cite[Lemma 5.4]{Ca} implies that there exists a set $X\subset A^4$ of size at most $K^3$ such that $A^4\subset XA^2$ (see also the proof of \cite[Lemma 5.5]{Ca}).
\end{proof}

Most of the early work on approximate groups treated the special case of finite approximate subgroups of discrete groups. Breuillard, Green and Tao \cite{bgt} have given a rough classification of such approximate groups; the main result we need from their work is the following variant of Theorem \ref{thm:bgt.gromov}.
\begin{theorem}[Breuillard--Green--Tao {\cite[Corollary 11.2\footnote{Although the statement of \cite[Corollary 11.2]{bgt} does not mention that $G_1$ is normal in $G$, this property is stated explicitly in the proof} \& Remark 11.4]{bgt}}]\label{thm:bgt.grom.ag}
Let $K\ge1$. Then there exists $n_0=n_0(K)\in\N$ such that the following holds. Let $G$ be a group generated by a finite symmetric set $S$ containing the identity, and suppose that $n\ge n_0$ and $S^n$ is a $K$-approximate group. Then there are subgroups $H,G_1\lhd G$ with $H\subset S^{4n}\cap G_1$ such that $G_1$ has index $O_K(1)$ in $G$, and such that $G_1/H$ is nilpotent of rank and step $O_K(1)$.
\end{theorem}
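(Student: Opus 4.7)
The plan is to apply the main structure theorem for finite approximate groups of Breuillard--Green--Tao, namely \cite[Corollary 11.2]{bgt}, directly to the $K$-approximate group $A = S^n$. Since $S$ generates $G$, so does $A$, and the corollary produces a finite-index subgroup $G_1 \le G$ with $[G:G_1] = O_K(1)$, together with a subgroup $H \lhd G_1$ satisfying $H \subset A^4 = S^{4n}$, such that $G_1/H$ is nilpotent of rank and step at most $O_K(1)$. The hypothesis $n \ge n_0(K)$ is inherited directly from the hypothesis of \cite[Corollary 11.2]{bgt}.

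Two points require clarification. First, the statement of \cite[Corollary 11.2]{bgt} does not explicitly assert that $G_1$ is normal in $G$, but this follows from a direct inspection of their proof, as recorded in the footnote to the theorem. Second, their $H$ is only asserted to be normal in $G_1$, whereas we need $H \lhd G$. To arrange this, I would replace $H$ by its normal core in $G$, namely
\[
H' \;=\; \bigcap_{g \in G / G_1} g H g^{-1}.
\]
Since $G_1 \lhd G$ we have $g H g^{-1} \subset G_1$ for every $g \in G$, so $H' \lhd G$ with $H' \subset H \subset S^{4n} \cap G_1$. Furthermore $G_1/H'$ embeds into the finite direct product $\prod_{g} G_1/(gHg^{-1})$; each factor is isomorphic to $G_1/H$ (via the automorphism $x \mapsto g^{-1}xg$ of $G_1$), and nilpotency step is preserved under finite direct products while rank is multiplied by at most the number of factors $[G:G_1]$. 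Since both $[G:G_1]$ and the rank and step of $G_1/H$ are $O_K(1)$, the quotient $G_1/H'$ remains nilpotent of rank and step $O_K(1)$, as required.

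The main obstacle, viewed honestly, is importing \cite[Corollary 11.2]{bgt}; its proof is a substantial undertaking resting on Hrushovski's Lie-model theorem from nonstandard analysis, the Gleason--Yamabe theorem on locally compact groups, and a detailed analysis of approximate subgroups of connected nilpotent Lie groups. With that black box in hand, however, the present theorem is essentially a bookkeeping exercise: the only nontrivial step beyond invoking the corollary is the normal-core construction above to promote normality of $H$ from $G_1$ to $G$.
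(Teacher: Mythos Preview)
The paper does not supply its own proof of this statement: it is quoted directly from \cite{bgt} as a black box, with the footnote and the addendum ``\& Remark 11.4'' accounting for the extra normality and rank claims. Your proposal is therefore not comparable to a proof in the paper, but it is a correct and careful reconstruction of how the stated form follows from \cite[Corollary~11.2]{bgt}.

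Your normal-core step is sound: since $G_1\lhd G$, conjugation by any $g\in G$ is an automorphism of $G_1$, so each $gHg^{-1}$ is a normal subgroup of $G_1$ with $G_1/(gHg^{-1})\cong G_1/H$; the diagonal embedding of $G_1/H'$ into the finite product then bounds step and rank as you say. One cosmetic point: it may be that Remark~11.4 of \cite{bgt} already records the normality of $H$ in $G$ (or the rank bound), in which case your core construction is unnecessary; but as a self-contained derivation from the literal statement of Corollary~11.2 it is valid.
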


We also need the following result, which is based on work of the second author on finite nilpotent approximate groups. In it, and throughout the paper, given two group elements $g,h$ write $h^g=g^{-1}hg$. Given two groups $H,G$ with $H<G$ we write $H^G$ for the normal closure of $H$ in $G$, which is to say the subgroup $H^G=\langle h^g:h\in H,g\in G\rangle$.
\begin{prop}\label{prop:normal.closure}
Let $G$ be a group generated by a finite symmetric set $S$ containing the identity, and suppose that $N\lhd G$ is a nilpotent normal subgroup of step $s$ and index $k$. Suppose further that $S^n$ is a $K$-approximate group, and that $H$ is a subgroup of $G$ contained in $S^n\cap N$. Then $H^G\subset S^{{nK^{O_s(k)}}}$.
\end{prop}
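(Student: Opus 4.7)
The plan is to reduce the normal closure in $G$ to a bounded product of normal closures in the nilpotent subgroup $N$, where the second author's prior work on finite nilpotent approximate groups can be invoked.

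First, I would use a Schreier transversal argument: since $G$ is generated by $S$ and $[G:N]=k$, a breadth-first search in the action of $G$ on the left coset space $G/N$ yields representatives $g_1=1,g_2,\dots,g_k$ with each $g_i\in S^{k-1}$. Every $g\in G$ then factors as $g=g_in$ for some $n\in N$, so for $h\in H$ we have $h^g=(h^{g_i})^n$. Since $H\subset N$ and $N\lhd G$, each subgroup $H^{g_i}=g_i^{-1}Hg_i$ lies in $N$. Setting $A_i:=(H^{g_i})^N$ (the normal closure of $H^{g_i}$ inside $N$), the above decomposition gives
\[
H^G=\bigl\langle (H^{g_i})^n : 1\le i\le k,\; n\in N\bigr\rangle=\langle A_1,\dots,A_k\rangle\subset N.
\]
Because each $A_i$ is normal in $N$, the subgroup they generate is just the set-theoretic product $A_1A_2\cdots A_k$.

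Next I would control each $A_i$ using the nilpotent approximate-group structure. Since $g_i\in S^{k-1}$, we have $H^{g_i}\subset S^{n+2(k-1)}$. A standard iteration shows that if $S^n$ is a $K$-approximate group then $S^{jn}$ is a $K^{O(j)}$-approximate group for every positive integer $j$; taking $j=O(k)$ we see that $H^{g_i}$ is contained in the $K^{O(k)}$-approximate group $S^{jn}\cap N$, with $jn=O(nk)$. The crux is now an appeal to the second author's work on finite nilpotent approximate groups, whose relevant consequence may be summarised as follows: if $T^m$ is a $K'$-approximate group, $N$ is nilpotent of step $s$, and $H'\subset T^m\cap N$ is a subgroup, then $(H')^N\subset T^{mK'^{O_s(1)}}$. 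Applying this with $T=S$, $m=jn=O(nk)$ and $K'=K^{O(k)}$ yields
\[
A_i=(H^{g_i})^N\subset S^{\,O(nk)\cdot K^{O_s(k)}}.
\]
Assuming $K\ge2$ (the case $K=1$ forces $S^n=G$ and is trivial), one absorbs the polynomial factor $O(k)$ into $K^{O_s(k)}$ to obtain $A_i\subset S^{nK^{O_s(k)}}$.

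Finally, combining the $k$ pieces,
\[
H^G=A_1A_2\cdots A_k\subset S^{\,k\cdot nK^{O_s(k)}}\subset S^{\,nK^{O_s(k)}},
\]
again absorbing the factor $k$ into the exponent of $K$. The main obstacle is the input from the second author's nilpotent result: one has to ensure that it applies verbatim to the approximate group $S^{jn}$ and produces an exponent of the claimed form $mK'^{O_s(1)}$ (so that the iteration in $k$ costs only an $O_s(k)$ factor in the exponent of $K$, rather than, say, $k$-fold composition of $K^{O_s(1)}$). Once that lemma is in hand, the Schreier transversal, the product decomposition $H^G=A_1\cdots A_k$, and the absorption of the polynomial factors are routine.
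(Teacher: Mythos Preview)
Your approach is essentially the paper's: both apply \cref{lem:norm.closure} to bound $H^N$ (equivalently each $(H^{g_i})^N=(H^N)^{g_i}$) inside a power of $S$, and then use coset representatives $g_i\in S^{k-1}$ to write $H^G=\prod_i(H^N)^{g_i}$, which is exactly \cref{lem:H^G.in.S^C}. The one point to tighten is that \cref{lem:norm.closure} requires the approximate group $A$ to lie in and \emph{generate} $N$; your paraphrase of the lemma omits this hypothesis and is false without it, so before invoking it you must verify, as the paper does via \cite[Lemma~4.2]{bt} and \cite[Lemma~2.10(ii)]{nilp.frei}, that $S^{2kn-1}\cap N$ generates $N$ and is itself a $K^{O(k)}$-approximate group.
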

The main ingredient in the proof of \cref{prop:normal.closure} is the following result.
\begin{lemma}[{\cite[Proposition 7.3]{nilp.frei}}]\label{lem:norm.closure}
Let $N$ be an $s$-step nilpotent group generated by a $K$-approximate group $A$. Let $H\subset A$ be a subgroup of $N$. Then $H^N\subset A^{K^{O_s(1)}}$.\qed
\end{lemma}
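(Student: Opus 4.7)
The plan is to reduce the problem to an application of \cref{lem:norm.closure} inside the nilpotent group $N$, after first decomposing $H^G$ relative to a transversal. Since $S$ generates $G$, the chain of subsets $\pi(S^m) \subset G/N$, where $\pi : G \to G/N$ is the quotient map, strictly increases in $m$ until it fills the $k$ cosets, so this happens by $m = k-1$; hence we may fix a transversal $g_1, \ldots, g_k \in S^{k-1}$ of $N$ in $G$. Writing $H_i = g_i^{-1}Hg_i \subset N$, I would establish the identity
\[
H^G = H_1^N \cdot H_2^N \cdots H_k^N,
\]
where $H_i^N$ denotes the normal closure of $H_i$ in $N$. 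The inclusion $\supset$ is immediate. For $\subset$, note that conjugation by any $g \in G$ sends $H_i^N$ to $H_{j}^N$ for some $j$ determined by the coset of $g_i g^{-1}$: writing $g_i g^{-1} = g_j n$ with $n \in N$, one gets $g H_i g^{-1} = H_j^n$, so $g H_i^N g^{-1} = (g H_i g^{-1})^N = H_j^N$. Since the $H_i^N$ are normal in $N$, their product commutes as subsets, so the right-hand side is $G$-invariant and contains $H$, and hence contains $H^G$.

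The next step is to fit the $H_i$ inside a common approximate subgroup $A \subset N$ with controlled constants. Set $A = S^{4kn} \cap N$. The containment $H_i \subset A$ is immediate from $H_i \subset S^{k-1} S^n S^{k-1} \subset S^{n+2(k-1)} \subset S^{4kn}$ together with $H_i \subset N$. The set $A$ generates $N$ by Schreier's lemma, since the Schreier generators of $N$ relative to $\{g_i\} \subset S^{k-1}$ lie in $S^{2k-1} \cap N \subset A$. The key non-trivial point is that $A$ is a $K^{O(k)}$-approximate group: the hypothesis that $S^n$ is $K$-approximate gives $|S^{3n}| \le K^2|S^n|$, so \cref{lem:higher.prods} yields $|S^{3kn}| \le K^{O(k)}|S^{kn}|$, and then \cref{prop:tripling} (applied in the discrete locally compact group $G$ with counting measure) shows that $S^{2kn} = (S^{kn})^2$ is a $K^{O(k)}$-approximate group. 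Finally, the standard slicing observation, that if $B$ is an $L$-approximate group and $M \le G$ is any subgroup then $B^2 \cap M$ is an $L^3$-approximate group (proved by Ruzsa-covering $B^4 \subset XB$ by $|X| \le L^3$ left translates and then covering each translate meeting $M$ by one translate of $B^2 \cap M$), applied to $B = S^{2kn}$ promotes $A = (S^{2kn})^2 \cap N$ to a $K^{O(k)}$-approximate group as required.

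Finally, \cref{lem:norm.closure} applied to $H_i \subset A$ inside the $s$-step nilpotent group $N = \langle A \rangle$ yields $H_i^N \subset A^{(K^{O(k)})^{O_s(1)}} = A^{K^{O_s(k)}}$ for each $i$, so
\[
H^G = \prod_{i=1}^k H_i^N \subset A^{k\,K^{O_s(k)}} \subset A^{K^{O_s(k)}} \subset S^{4kn \cdot K^{O_s(k)}} = S^{nK^{O_s(k)}},
\]
where the polynomial-in-$k$ factors $k$ and $4k$ are absorbed into the exponent $K^{O_s(k)}$ (trivially when $K = 1$, since then $S^n$ is a subgroup and equals $G$). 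The main obstacle is controlling the approximation constant of $A$ at $K^{O(k)}$ uniformly in $n$; once that is secured by combining \cref{lem:higher.prods}, \cref{prop:tripling}, and the slicing lemma, the rest of the argument is algebraic bookkeeping on top of \cref{lem:norm.closure}.
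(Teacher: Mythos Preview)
Your proposal does not prove \cref{lem:norm.closure}; it proves \cref{prop:normal.closure} instead, and it does so by \emph{invoking} \cref{lem:norm.closure} as a black box. The statement you were asked to prove concerns a single $s$-step nilpotent group $N$ generated by a $K$-approximate group $A$, a subgroup $H\subset A$, and the conclusion $H^N\subset A^{K^{O_s(1)}}$. There is no ambient group $G$, no generating set $S$, no index $k$, and no transversal in that statement; all of those objects belong to \cref{prop:normal.closure}. Your very first line, ``reduce the problem to an application of \cref{lem:norm.closure},'' is circular if the goal is to establish \cref{lem:norm.closure}.

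Note also that the paper gives no proof of \cref{lem:norm.closure}: it is quoted verbatim from \cite[Proposition 7.3]{nilp.frei} and marked with a \textsf{qed}. A genuine proof would need the nilpotent commutator calculus from that reference (iterated use of commutator identities to push conjugates $h^a$ with $a\in A^m$ into bounded powers of $A$, with the exponent depending only on $s$ and $K$). What you have written is, in substance, an alternative write-up of the paper's proof of \cref{prop:normal.closure}: your decomposition $H^G=\prod_i H_i^N$ via a transversal plays the same role as the paper's appeal to \cref{lem:H^G.in.S^C}, and your slicing argument for $S^{4kn}\cap N$ being a $K^{O(k)}$-approximate group replaces the paper's citations of \cite[Lemma 4.2]{bt} and \cite[Lemma 2.10(ii)]{nilp.frei}. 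That argument is fine as a proof of \cref{prop:normal.closure}, but it is not what was asked.
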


We also use the following standard lemmas.

\begin{lemma}\label{lem:cosets.increasing}\label{lem:coset.reps.ball}
Let $G$ be a group with symmetric generating set $S$ containing the identity, and let $H$ be a subgroup of index at least $m\in\N$ in $G$. Then $S^{m-1}$ has non-empty intersection with at least $m$ distinct left-cosets of $H$.
\end{lemma}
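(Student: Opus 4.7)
The plan is to track the growth of the intersection of $S^k$ with cosets of $H$ as $k$ increases. Let $c_k$ denote the number of distinct left-cosets of $H$ that meet $S^k$, and let $\mathcal{T}_k$ denote the corresponding family of cosets. Since $1\in S$, we have $S^k\subseteq S^{k+1}$, and hence $\mathcal{T}_k\subseteq\mathcal{T}_{k+1}$ and $c_k\le c_{k+1}$, while $c_0=1$. The goal is to show $c_{m-1}\ge m$, for which I would set up the following dichotomy: either $(c_k)_{k=0}^{m-1}$ is strictly increasing, in which case $c_{m-1}\ge 1+(m-1)=m$ immediately; or there is a smallest index $k^*\le m-2$ with $c_{k^*}=c_{k^*+1}$, and I will show that in this case $c_{k^*}=[G:H]\ge m$, which still yields $c_{m-1}\ge c_{k^*}\ge m$ by monotonicity.

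The main step is the stabilisation claim: if $\mathcal{T}_k=\mathcal{T}_{k+1}$ for some $k$, then in fact $\mathcal{T}_k$ consists of \emph{all} left-cosets of $H$. To prove it, I would observe that the equality $\mathcal{T}_k=\mathcal{T}_{k+1}$ means that for every $s\in S$ and every $g\in S^k$, the coset $sgH$ still lies in $\mathcal{T}_k$; in other words, left multiplication by any element of $S$ preserves $\mathcal{T}_k$ as a set of cosets. Iterating this over products, we get $\mathcal{T}_{k+j}=\mathcal{T}_k$ for every $j\ge 0$. Since $S$ generates $G$, every $g\in G$ lies in some $S^j$, so every coset of $H$ belongs to $\bigcup_{j\ge 0}\mathcal{T}_j=\mathcal{T}_k$. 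Because $H$ has index at least $m$, this forces $c_k\ge m$, as required.

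There is no real obstacle here; this is a clean monotonicity-plus-stabilisation argument of the kind familiar from the classical proof that growth functions of finitely generated groups are either eventually constant or strictly increasing past every bound. The only thing one needs to be slightly careful about is that the conclusion involves $m-1$ rather than $m$, which is where the case $c_0=1$ (one coset at step zero) is used to reach $m$ cosets in $m-1$ further steps.
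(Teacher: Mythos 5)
Your proposal is correct and takes essentially the same approach as the paper: the paper notes that if $S^{n+1}H = S^nH$ then by induction $S^rH = S^nH$ for all $r \ge n$, whence $G = S^nH$ (since $S$ generates $G$), and concludes that otherwise $H \subsetneqq SH \subsetneqq \cdots \subsetneqq S^{m-1}H$, so the number of cosets meeting $S^n$ strictly increases from $1$. This is exactly your monotonicity-plus-stabilisation dichotomy; the paper just phrases the stabilisation step in terms of the subsets $S^nH \subset G$ rather than the families $\mathcal{T}_k$ of cosets, which sidesteps the (easily patched) point that passing from $c_{k^*} = c_{k^*+1}$ to $\mathcal{T}_{k^*} = \mathcal{T}_{k^*+1}$ requires $c_{k^*}$ to be finite.
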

\begin{proof}
If $S^{n+1}H=S^nH$ for a given $n\ge0$ then it follows by induction that $S^rH=S^nH$ for every $r\ge n$, and hence that $G=S^nH$. We may therefore assume that $H\subsetneqq SH\subsetneqq S^2H\subsetneqq\ldots\subsetneqq S^{m-1}H$. This implies that the number of left-cosets of $H$ having non-empty intersection with $S^n$ is strictly increasing for $n=0,1,\ldots,m-1$, and so the lemma is proved.
\end{proof}

\begin{lemma}\label{lem:fi.normal}Let $G$ be a group and let $H<G$ have index $k\in\N$ in $G$. Then there exists a subgroup $H'<H$ with $H'\lhd G$ such that $[G:H']\le k!$.
\end{lemma}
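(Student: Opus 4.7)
The plan is to use the standard trick of letting $G$ act on the coset space $G/H$ by left multiplication. First I would let $X$ denote the set of left cosets of $H$ in $G$, which has cardinality $k$, and define the permutation representation
\[
\ph : G \to \Sym(X), \qquad \ph(g)(xH) = gxH.
\]
This is well defined and a homomorphism, since left multiplication by $g$ permutes the cosets. I would then set $H' = \ker\ph$, which is automatically a normal subgroup of $G$.

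Next I would verify the two required properties. For the inclusion $H' \subset H$, note that if $g \in H'$ then $\ph(g)$ fixes every coset, in particular $gH = \ph(g)(H) = H$, which forces $g \in H$. For the index bound, the first isomorphism theorem gives an injection $G/H' \hookrightarrow \Sym(X)$, and since $|\Sym(X)| = k!$ we conclude $[G:H'] \le k!$.

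There is no real obstacle here; the proof is a textbook application of the coset action. The only small point worth flagging is that $H'$ coincides with the normal core $\bigcap_{g \in G} gHg^{-1}$ of $H$ in $G$, but this identification is not actually needed for the statement of the lemma.
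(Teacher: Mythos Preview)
Your proof is correct and is essentially identical to the paper's own argument: both take the left-multiplication action of $G$ on the coset space $G/H$, set $H'$ to be the kernel of the induced homomorphism $G\to\Sym(G/H)$, and read off that $H'\lhd G$, $H'\subset H$, and $[G:H']\le|\Sym(G/H)|=k!$. Your write-up simply spells out the verification of $H'\subset H$ and the index bound a little more explicitly than the paper does.
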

\begin{proof}
The action of $G$ on $G/H$ defined by $g(xH)=(gx)H$ induces a homomorphism of $G$ into $\Sym(G/H)$. Take $H'$ to be the kernel of this homomorphism, noting that $H'$ is normal, contained in $H$, and of index at most $|\Sym(G/H)|=k!$ in $G$.
\end{proof}

\begin{lemma}\label{lem:H^G.in.S^C}
Let $G$ be a group with symmetric generating set $S$ containing the identity, and suppose that $H_0\lhd N\lhd G$, that $N$ has index at most $k$ in $G$, and that $H_0\subset S^r$. Then $H_0^G\subset S^{kr+2k^2}$. Moreover, if $G$ is a topological group and $H_0$ is compact then so is $H_0^G$.
\end{lemma}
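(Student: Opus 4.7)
The plan is to exploit the tower $H_0 \lhd N \lhd G$ to write $H_0^G$ as a product of just $[G:N]$ many $S$-short conjugates of $H_0$, rather than as an ostensibly infinite union. Let $m = [G:N]\le k$. By \cref{lem:coset.reps.ball}, the set $S^{m-1}$ meets every left coset of $N$, so we may pick coset representatives $g_1,\ldots,g_m\in S^{m-1}\subset S^{k-1}$, with $g_1=1$.

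The key observation I would establish first is that each conjugate subgroup $H_0^{g_i}$ is normalised by $N$. This follows because for any $n\in N$, normality of $N$ in $G$ gives $g_i n g_i^{-1}\in N$, and normality of $H_0$ in $N$ then yields
\[
(H_0^{g_i})^n = H_0^{g_i n} = H_0^{(g_i n g_i^{-1})g_i} = (H_0^{g_i n g_i^{-1}})^{g_i} = H_0^{g_i}.
\]
Consequently the subgroups $H_0^{g_1},\ldots,H_0^{g_m}$, all of which lie inside $N$, pairwise normalise each other, so their product $H^\ast := H_0^{g_1}H_0^{g_2}\cdots H_0^{g_m}$ is a subgroup of $N$.

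Next I would verify that $H^\ast$ is normal in $G$, which together with $H_0=H_0^{g_1}\subset H^\ast$ forces $H_0^G=H^\ast$. Normality under $N$ is immediate from the previous step. For a generator $g_j$, right multiplication by $g_j$ permutes the cosets $g_iN$, so $g_ig_j=g_{\sigma(i)}n_i$ for some permutation $\sigma$ and elements $n_i\in N$; then $(H_0^{g_i})^{g_j}=H_0^{g_ig_j}=H_0^{g_{\sigma(i)}}$ (using the previous observation to absorb $n_i$), and since the factors commute up to reordering inside $H^\ast$, we conclude $(H^\ast)^{g_j}=H^\ast$. Since $G$ is generated by $N\cup\{g_1,\ldots,g_m\}$, this gives $H^\ast\lhd G$.

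It then remains to extract the explicit bound. Since $g_i,g_i^{-1}\in S^{k-1}$ and $H_0\subset S^r$, we have $H_0^{g_i}\subset S^{r+2(k-1)}$; multiplying $m\le k$ such factors yields
\[
H_0^G=H^\ast\subset S^{k(r+2(k-1))}\subset S^{kr+2k^2},
\]
as required. Finally, for the topological statement, each $H_0^{g_i}$ is the image of the compact set $H_0$ under the continuous conjugation map $x\mapsto g_i^{-1}xg_i$, hence compact, and the product of finitely many compact sets is compact by \cref{lem:top.grp.prod}, so $H_0^G=H^\ast$ is compact. The only conceptual step requiring care is the identity $(H_0^{g_i})^n=H_0^{g_i}$ and the resulting permutation argument; everything else is a routine calculation.
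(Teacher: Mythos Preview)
Your proof is correct and follows essentially the same route as the paper: choose coset representatives $g_i$ in a small ball via \cref{lem:coset.reps.ball}, observe that each $H_0^{g_i}$ is normal in $N$, and conclude $H_0^G=\prod_i H_0^{g_i}\subset S^{kr+2k^2}$ (with compactness from \cref{lem:top.grp.prod}). The only difference is packaging: where you verify $H^\ast\lhd G$ via the permutation argument, the paper simply notes that any $g\in G$ has the form $g_i n$ with $n\in N$, so $H_0^g=H_0^{g_i}$ and hence $H_0^G$ is already generated by the finitely many $H_0^{g_i}$---slightly quicker, but the content is the same.
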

\begin{proof}
Applying \cref{lem:coset.reps.ball}, let $g_1,\ldots,g_k$ be a complete set of coset representatives for $N$ in $G$ such that $g_i\in S^k$ for each $i$. Since each group $H_0^{g_i}$ is a normal subgroup of $N$ we have $H_0^G=\prod_{i=1}^kH_0^{g_i}$. This implies that $H_0^G\subset S^{kr+2k^2}$, as required, and combined with \cref{lem:top.grp.prod} it implies that if $H_0$ is compact then so is $H_0^G$.
\end{proof}

\begin{proof}[Proof of \cref{prop:normal.closure}]
If $K<2$ then $S^n$ is a subgroup, and hence equal to $G$, and the proposition is trivial. We may therefore assume that $K\ge2$, which confers the minor notational convenience that any quantity bounded by $O(K)$ is also bounded by $K^{O(1)}.$ It follows from \cite[Lemma 4.2]{bt} that $N=\left\langle S^{2kn-1}\cap N\right\rangle$, and from \cite[Lemma 2.10 (ii)]{nilp.frei} that $S^{2kn-1}\cap N$ is a $K^{4k}$-approximate group. \cref{lem:norm.closure} therefore implies that
$H^N\subset S^{knK^{O_s(k)}}$, which simplifies to $H^N\subset S^{nK^{O_s(k)}}$. Applying \cref{lem:H^G.in.S^C} with $H_0=H^N$ and $r=nK^{O_s(k)}$ gives $H^G\subset S^{{k^2+nK^{O_s(k)}}}\subset S^{{nK^{O_s(k)}}}$.
\end{proof}

A compactly generated group $G$ with Haar measure $\mu$ is said to have \emph{polynomial growth} if there exist a compact symmetric neighbourhood $S$ of the identity and constants $C,d\ge0$ such that $\mu(S^n)\le Cn^d$ for every $n\in\N$. Losert \cite[Theorem 2]{losert} showed that if $G$ is a locally compact Hausdorff group with polynomial growth then there is a compact normal subgroup $H\lhd G$ such that $G/K$ is a Lie group. Carolino \cite{Ca} obtained the following refinement of Losert's theorem, in a similar spirit to the Breuillard--Green--Tao refinement of Gromov's theorem.
\begin{theorem}[Carolino {\cite[Theorem 1.9]{Ca}}]\label{thm:carolino}
Let $A$ be an open precompact $K$-approximate subgroup of a locally compact Hausdorff group $G$. Then there exist a subgroup $L<G$ and a compact normal subgroup $H\lhd L$ such that $H\subset A^4$, such that $A$ is contained in the union of at most $O_K(1)$ left-cosets of $L$, and such that $L/H$ is a Lie group of dimension at most $O_K(1)$.\qed
\end{theorem}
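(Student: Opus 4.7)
The plan is to adapt the ultraproduct/Lie model approach that underlies \cref{thm:bgt.grom.ag} to the locally compact Hausdorff setting, using the Gleason--Yamabe structure theorem in place of combinatorial finiteness arguments. I would argue by contradiction: supposing the conclusion fails for some fixed $K$, produce a sequence of counterexamples $(A_n,G_n)$ and pass to a nonstandard ultraproduct $(\mathbf{A},\mathbf{G})=\prod_n(A_n,G_n)$ with respect to a nonprincipal ultrafilter on $\N$. Then $\mathbf{A}$ is a nonstandard $K$-approximate subgroup of $\mathbf{G}$, and the problem is reduced to producing, inside $\mathbf{G}$, a subgroup $\mathbf{L}$ and a ``small'' normal subgroup $\mathbf{H}\lhd\mathbf{L}$ with $\mathbf{L}/\mathbf{H}$ a Lie group of bounded dimension, whose structure can then be transferred back.

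Next I would construct such a Lie model along the lines of Hrushovski's work. The idea is to identify a countable intersection $\mathbf{H}=\bigcap_k\mathbf{U}_k$ of internally defined symmetric neighbourhoods of the identity inside $\langle\mathbf{A}\rangle$ (the ``infinitesimal'' subgroup); using a Sanders--Croot--Sisask style argument inside the approximate group $\mathbf{A}$ one produces a decreasing sequence of such neighbourhoods whose iterated products remain inside $\mathbf{A}^4$, ensuring in the end that $\mathbf{H}\subset\mathbf{A}^4$. After quotienting by $\mathbf{H}$, the group $\mathbf{L}=\langle\mathbf{A}\rangle$ becomes a locally compact Hausdorff group in which the image of $\mathbf{A}$ is relatively compact, and $\mathbf{A}$ is covered by boundedly many cosets of $\mathbf{L}$ by a standard covering argument. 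Applying Gleason--Yamabe to $\mathbf{L}/\mathbf{H}$ then produces, after absorbing a further compact normal subgroup into $\mathbf{H}$, a connected Lie group quotient of finite dimension.

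To bound this dimension by $O_K(1)$ I would use classical Gleason escape-norm arguments: each independent one-parameter subgroup in the Lie quotient contributes a multiplicative factor to the Haar volume growth of $\mathbf{A}^n$ as $n$ grows, and a Ruzsa--Plünnecke type inequality for $K$-approximate groups (via \cref{lem:triangle,lem:higher.prods}) forces the number of independent directions to be at most $O_K(1)$. This is the step where the doubling hypothesis truly enters quantitatively and where one must be careful that the Haar measure chosen in the ultraproduct behaves well under the infinitesimal quotient.

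Finally, a standard \L{}o\'s/transfer argument descends the conclusion from $(\mathbf{L},\mathbf{H})$ inside $(\mathbf{G},\mathbf{A})$ back to the existence of $(L_n,H_n)$ inside $(G_n,A_n)$ for almost every $n$ in the ultrafilter, contradicting the hypothesis that these were counterexamples. The principal obstacle, and the reason Carolino's theorem is deeper than simply quoting Gleason--Yamabe, lies in the construction of the Lie model: one must simultaneously show that the infinitesimal subgroup $\mathbf{H}$ can be forced to sit inside $\mathbf{A}^4$ (not merely inside some larger fixed power of $\mathbf{A}$), that the quotient carries a locally compact topology compatible with the approximate group structure, and that the Gleason--Yamabe dimension is controlled only by $K$. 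This step uses the deepest ingredients of approximate group theory and is also the source of the ineffectiveness noted by the authors before the statement of the theorem.
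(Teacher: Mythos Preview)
The paper does not prove this statement: it is quoted directly from Carolino's thesis and closed with a \qed, so there is no ``paper's own proof'' to compare against. Your outline is a reasonable high-level sketch of the argument Carolino actually gives, which indeed follows the Hrushovski/Breuillard--Green--Tao ultraproduct Lie-model template, adapted to the locally compact setting via Gleason--Yamabe; in particular the contradiction/ultraproduct set-up, the Sanders--Croot--Sisask construction of the infinitesimal subgroup inside $\mathbf{A}^4$, the passage to a locally compact quotient, and the \L{}o\'s transfer back are all genuine features of that proof.

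One small correction of emphasis: the $O_K(1)$ bound on the Lie dimension is not obtained by a separate Ruzsa--Pl\"unnecke volume-growth argument after the fact, as your sketch suggests; rather, it falls out of the Lie-model construction itself, since the image of $\mathbf{A}$ in the Lie quotient is a compact approximate subgroup generating an open subgroup, and the dimension of a Lie group so generated is controlled by the doubling constant via the escape-norm/trapping machinery built into the Gleason--Yamabe step. This is a matter of where the bound appears in the argument rather than a logical gap.
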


Using a similar argument to that used by Breuillard, Green and Tao to deduce \cref{thm:bgt.grom.ag} from their work on finite approximate groups, one can deduce the following result from \cref{thm:carolino}.
\begin{corollary}\label{cor:caro.grom}
Let $K\ge1$. Then there exists $n_0=n_0(K)\in\N$ such that the following holds. Let $G$ be a locally compact Hausdorff group, let $S$ be an open compact symmetric generating set containing the identity, and suppose that $n\ge n_0$ and $S^n$ is a $K$-approximate group. Then there are subgroups $H,L\lhd G$ with $H\subset S^{O_K(n)}\cap L$ such that $H$ is compact and $L$ is open and has index $O_K(1)$ in $G$, and such that $L/H$ is a Lie group of dimension at most $O_K(1)$.
\end{corollary}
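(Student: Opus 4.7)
The argument follows the Breuillard--Green--Tao strategy for deducing \cref{thm:bgt.grom.ag} from their classification of finite approximate groups, with \cref{thm:carolino} playing the role of the classification in the locally compact setting. Since $S^n$ is open and compact (hence precompact) by \cref{lem:top.grp.prod}, symmetric, and $K$-approximate by hypothesis, apply \cref{thm:carolino} with $A = S^n$. This produces a subgroup $L_0 \le G$, a compact normal subgroup $H_0 \lhd L_0$ with $H_0 \subset S^{4n}$, a set $X \subset G$ with $|X| \le O_K(1)$ such that $S^n \subset X L_0$, and such that $L_0/H_0$ is a Lie group of dimension $O_K(1)$. We may arrange $X \subset S^n$ with $1 \in X$.

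Next we show $L_0$ is open in $G$. Fix a Haar measure $\mu$ on $G$; by pigeonhole applied to $S^n \subset X L_0$, some coset $xL_0$ satisfies $\mu(S^n \cap xL_0) \ge \mu(S^n)/|X|$, whence $\mu(L_0) \ge \mu(S^n)/|X| > 0$. A subgroup of positive Haar measure in a locally compact group contains an open neighbourhood of the identity (Steinhaus, applied to $L_0 \cdot L_0^{-1} = L_0$), so $L_0$ is open; by \cref{lem:open.subgrp} it is also closed, and $G/L_0$ is a discrete coset space.

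The central step, and principal obstacle, is the finite-index bound $[G:L_0] \le O_K(1)$; this is the analogue of the finite-index step in the discrete Breuillard--Green--Tao argument and is where the topological setting requires the most care. One combines the Haar-measure estimate $\mu(L_0) \ge \mu(S^n)/|X|$ with the iterated approximate-group bounds $\mu(S^{kn}) \le K^{k-2}\mu(S^n)$ from \cref{lem:higher.prods}, and uses that $G = \langle S \rangle = \bigcup_k S^{kn}$, to force the sequence of coset counts $|S^{kn}L_0/L_0|$ to stabilise at a value bounded uniformly by $O_K(1)$; this in turn relies on propagation of the approximate-group property to all scales above $n$. Let $L$ be the normal core of $L_0$ in $G$: then $L \lhd G$ is open (as a finite intersection of open conjugates of $L_0$) and $[G:L] \le [G:L_0]! = O_K(1)$ via the faithful action $G/L \hookrightarrow \Sym(G/L_0)$.

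Finally, set $H_1 = L \cap H_0$, a compact subgroup of $L$ with $H_1 \subset S^{4n}$, normal in $L$ since $H_0 \lhd L_0 \supseteq L$. Let $H = H_1^G$ denote the normal closure of $H_1$ in $G$. Since $L \lhd G$ has index $O_K(1)$ and $H_1 \lhd L$, \cref{lem:H^G.in.S^C} (with $N = L$ and $r = 4n$) yields $H \subset S^{O_K(n)}$, and \cref{lem:top.grp.prod} ensures $H$ is compact (as a product of finitely many compact conjugates). Thus $H \lhd G$, $H \subset L$, and $L/H$ is a Lie group of dimension at most $O_K(1)$: indeed, $L/H_1$ embeds as a closed subgroup of the Lie group $L_0/H_0$ via the second isomorphism theorem (hence is itself a Lie group by Cartan's closed subgroup theorem), and $L/H$ is the further quotient by the compact normal subgroup $H/H_1$.
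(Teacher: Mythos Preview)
Your overall architecture matches the paper's: apply \cref{thm:carolino} to $A=S^n$, replace $L_0$ by its normal core $L$, and set $H=(L\cap H_0)^G$ via \cref{lem:H^G.in.S^C}. The difference lies entirely in the finite-index step, and there your argument is incomplete.

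You correctly identify $[G:L_0]\le O_K(1)$ as the crux, but the sketch you give---combining $\mu(L_0)\ge\mu(S^n)/|X|$ with the polynomial bounds $\mu(S^{kn})\le K^{k-1}\mu(S^n)$ and hoping the coset counts $|S^{kn}L_0/L_0|$ stabilise at a bounded value---does not close. Those measure bounds only give $|S^{kn}L_0/L_0|\le K^{k-1}|X|$, which grows with $k$; you never explain how ``propagation of the approximate-group property'' forces stabilisation at an $O_K(1)$ value, and you never use the hypothesis $n\ge n_0$ anywhere. The paper's solution is far more elementary: take $n_0$ to be precisely the $O_K(1)$ coset bound coming from \cref{thm:carolino}. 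Then $S^n$ meets at most $n_0$ cosets of $L_0$, and since $n\ge n_0$ so does $S^{n_0-1}\subset S^n$; the contrapositive of \cref{lem:cosets.increasing} immediately gives $[G:L_0]\le n_0$. This is where $n_0$ actually enters the proof.

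Once finite index is established this way, your Steinhaus detour for openness of $L_0$ becomes unnecessary: in the paper, openness of $L$ is deduced \emph{after} the index bound, simply because $G/L$ is finite and hence discrete. The remainder of your argument (normal core via \cref{lem:fi.normal}, $H=(L\cap H_0)^G$ via \cref{lem:H^G.in.S^C}, and the Lie-group conclusion for $L/H$) is correct and essentially identical to the paper's.
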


\begin{proof}
Let $n_0$ be the bound on the number of left-cosets appearing in the conclusion of \cref{thm:carolino}, and suppose that $n\ge n_0$ and $S^n$ is a $K$-approximate group. \cref{thm:carolino} implies that there exist a subgroup $L_0<G$ and a compact normal subgroup $H_0\lhd L_0$ such that $H_0\subset S^{4n}$, such that $S^n$ is contained in the union of at most $n_0$ left-cosets of $L_0$, and such that $L_0/H_0$ is a Lie group of dimension at most $O_K(1)$. \cref{lem:cosets.increasing} implies that $L_0$ has index at most $n_0$ in $G$. \cref{lem:fi.normal} then implies that there exists a subgroup $L<L_0$ of index $O_K(1)$ that is normal in $G$. Since the quotient $G/L$ is finite, and hence discrete, $L$ is open. Taking $H=(L\cap H_0)^G$ then satisfies the corollary by \cref{lem:H^G.in.S^C}.
\end{proof}

\section{Proof of the main result}\label{sec:proof}

In this section we prove \cref{thm:main}, starting with the following preliminary generalisation of Theorem \ref{thm:trof}.

\begin{theorem}\label{thm:prelim}
Given $K\ge1$ there exists $n_0=n_0(K)$ such that the following holds. Suppose $\Gamma$ is a connected, locally finite vertex-transitive graph satisfying $\beta_\Gamma(3n)\le K\beta_\Gamma(n)$ for some $n\ge n_0$, and let $G\le\Aut(\Gamma)$ be a closed transitive subgroup. Then the set $S=\{g\in G:d(g(e),e)\le1\}$ is a compact open generating set for $G$ containing the identity, and $S^{2n}$ is a compact open $K^3$-approximate group. Furthermore, there is a compact open normal subgroup $H\lhd G$ such that
\begin{enumerate}[label=(\roman*)]
\item\label{item:proj.diam.prelim} every fibre of the projection $\Gamma\to\Gamma/H$ has diameter at most $O_K(n)$;
\item\label{item:H.is.ker} $G_{\Gamma/H}=G/H$;
\item\label{item:v.nilp.prelim} $G_{\Gamma/H}$ is a finitely generated group with a nilpotent normal subgroup of rank, step and index at most $O_K(1)$; and
\item\label{item:fin.stabs}every vertex stabiliser of the action of $G_{\Gamma/H}$ on $\Gamma/H$ is finite.
\end{enumerate}
\end{theorem}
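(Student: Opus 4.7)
By \cref{cor:growth.inherit}, $S$ is a compact open generating set for $G$ containing the identity, and any Haar measure $\mu$ on $G$ satisfies $\mu(S^m) = \mu(G_e)\beta_\Gamma(m)$. The hypothesis $\beta_\Gamma(3n) \le K\beta_\Gamma(n)$ therefore becomes $\mu((S^n)^3) \le K\mu(S^n)$, so \cref{prop:tripling} applied with $A = S^n$ yields that $S^{2n}$ is an open compact $K^3$-approximate group.

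Next, I would feed $S^{2n}$ into \cref{cor:caro.grom} with $K^3$ and $2n$ in place of $K$ and $n$: for $n$ large enough in terms of $K$, this produces normal subgroups $H_0, L \lhd G$ with $H_0$ compact and contained in $S^{O_K(n)} \cap L$, with $L$ open of index $O_K(1)$ in $G$, and with $L/H_0$ a Lie group of dimension $O_K(1)$. Now $G$ is totally disconnected by \cref{prop:tot.disc}, and the quotient $L/H_0$ remains totally disconnected (van~Dantzig furnishes a basis of compact open subgroups $U$ at the identity of $L$, and the images $UH_0/H_0$ then form such a basis at the identity of $L/H_0$). Since a totally disconnected Lie group must have dimension zero, $L/H_0$ is in fact discrete; in particular $H_0$ is open in $L$ and hence in $G$, and the containment $H_0 \subset S^{O_K(n)}$ combined with \cref{lem:ball.preimage} already gives diameter $O_K(n)$ for the fibres of $\Gamma \to \Gamma/H_0$, securing \cref{item:proj.diam.prelim}.

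Next, apply the finitary Breuillard--Green--Tao theorem \cref{thm:bgt.grom.ag} to the discrete finitely generated group $L/H_0$ of polynomial growth, noting that the image of $S^{2n}$ in $L/H_0$ remains a $K^{O(1)}$-approximate group by routine bookkeeping. Pulling back the resulting subgroups to $L$ gives a normal subgroup $H_1 \lhd L$ with $H_0 \subset H_1 \subset S^{O_K(n)}$ and a subgroup $G_1^* \lhd L$ of index $O_K(1)$ such that $G_1^*/H_1$ is nilpotent of rank and step $O_K(1)$. Setting $H = H_1^G$ (which lies inside $L$ because $L \lhd G$), \cref{lem:H^G.in.S^C} together with $[G:L] = O_K(1)$ keeps $H$ compact and inside $S^{O_K(n)}$, and $H$ is open because it contains the open subgroup $H_0$. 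The group $L/H$ is a quotient of $L/H_1$ and so inherits a nilpotent normal subgroup of rank, step and index $O_K(1)$; since $L/H$ has index $O_K(1)$ in $G/H$, the same structural bound transfers to $G/H$, establishing \cref{item:v.nilp.prelim}. Replacing $H$ by the kernel of the induced action on $\Gamma/H$ via \cref{lem:make.H.kernel} then gives \cref{item:H.is.ker} without enlarging $H$ beyond $S^{O_K(n)}$, and openness of $H$ combined with \cref{lem:discrete} yields \cref{item:fin.stabs}.

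The principal obstacle is quantitative bookkeeping: one must verify that the three successive subgroups $H_0 \subset H_1 \subset H_1^G$ all remain inside $S^{O_K(n)}$, and that the approximate group structure transports cleanly through the quotient $L \to L/H_0$ so that \cref{thm:bgt.grom.ag} applies with bounds depending only on $K$. The crucial structural observation that makes the proof go through is that total disconnectedness of $G$ collapses the Carolino Lie quotient to a discrete group, which lets the discrete finitary Gromov theorem take over in place of continuous Lie theory.
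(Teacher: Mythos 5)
Your opening is identical to the paper's, and your discreteness step is a legitimate variation: rather than passing to the image in $\Aut(\Gamma/H_0)$ and invoking \cref{prop:tot.disc} there (as the paper does), you argue directly that $L/H_0$ is totally disconnected via van Dantzig, hence the Carolino Lie quotient is discrete and $H_0$ is open. That works. The genuine gap appears afterwards, and it stems from your decision to feed $L/H_0$ (rather than $G/H_0$, or the paper's $G_{\Gamma/H_0}$) into \cref{thm:bgt.grom.ag}. Two problems follow. First, ``the image of $S^{2n}$ in $L/H_0$'' is not meaningful: $S$ generates $G$ and $S^{2n}\not\subset L$, so you first have to produce a symmetric generating set for $L$ whose powers form a bounded approximate group. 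That is a substantive passage-to-finite-index argument (in the spirit of \cite[Lemma 4.2]{bt} and \cite[Lemma 2.10 (ii)]{nilp.frei}), not routine bookkeeping, and it perturbs the scale $2n$ by a factor depending on $[G:L]$. Second, and more seriously, the subgroups that \cref{thm:bgt.grom.ag} returns from $L/H_0$ are normal in $L$, not in $G$, so you end with a nilpotent normal subgroup of the \emph{finite-index} subgroup $L/H$ of $G/H$. The assertion that ``the same structural bound transfers to $G/H$'' is false as stated: a normal subgroup of a finite-index subgroup need not be normal in the ambient group. To repair it you would pass to a normal core via \cref{lem:fi.normal} and then use the fact that a subgroup of a nilpotent group of rank $r$ and step $s$ has rank $O_{r,s}(1)$ --- precisely the fact the paper invokes elsewhere, but avoids needing here because it applies the Breuillard--Green--Tao theorem to the full group $G_{\Gamma/H_0}$ and obtains $N_1\lhd G_{\Gamma/H_0}$ from the start.

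Both difficulties evaporate if you apply \cref{thm:bgt.grom.ag} to $G/H_0$ instead of $L/H_0$: once $H_0$ is open, $G/H_0$ is discrete, the image $\pi(S)$ of $S$ is finite (a compact subset of a discrete group), and $\pi(S)^{2n}=\pi(S^{2n})$ is a $K^3$-approximate group because it is a homomorphic image of one. \cref{thm:bgt.grom.ag} then hands you $H_1,N_1\lhd G/H_0$, and the normality and index bounds descend cleanly to $G/H$; this is exactly the paper's route, modulo the cosmetic choice of $G_{\Gamma/H_0}$ over $G/H_0$. You should also record explicitly that $\Gamma/H$ is locally finite --- needed to invoke \cref{lem:discrete} --- which follows from compactness of $H$ and \cref{lem:comp.closure}.
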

\begin{remark*}
By \cref{lem:discrete}, conclusion \ref{item:fin.stabs} of \cref{thm:prelim} is equivalent to saying that $G_{\Gamma/H}$ is discrete, which by conclusion \ref{item:H.is.ker} is in turn equivalent to $H$ being open.
\end{remark*}

\begin{proof}
Woess \cite[Theorem 1]{woess} gave a short proof of Theorem \ref{thm:trof} using Gromov's and Losert's theorems; much of our proof of \cref{thm:prelim} is essentially Woess's argument with Gromov's theorem replaced by Theorem \ref{thm:bgt.gromov} and Losert's theorem replaced by Theorem \ref{thm:carolino}.

Fixing some Haar measure $\mu$ on $G$, \cref{cor:growth.inherit} implies that $S^m$ is a compact open generating set for $G$ containing the identity for every $m\in\N$, and that $\mu(S^{3n})\le K\mu(S^n)$. \cref{prop:tripling} then implies that $S^{2n}$ is a compact open $K^3$-approximate group, as required. \cref{cor:caro.grom} then implies that there exist subgroups $H_0,L\lhd G$, with $H_0\subset S^{O_K(n)}\cap L$ compact, such that $L$ has finite index in $G$ and $L/H_0$ is a Lie group.

The kernel $\{g\in G:gH_0(x)=H_0(x)\text{ for every }x\in\Gamma\}$ of the induced action of $G$ on $\Gamma/H_0$ is clearly closed, so $G_{\Gamma/H_0}$ is a quotient of $G/H_0$ by a closed subgroup. In particular, this means that $L_{\Gamma/H_0}$ is the quotient of $L/H_0$ by a closed subgroup, and hence that $L_{\Gamma/H_0}$ is a Lie group. \cref{lem:comp.closure} implies that the orbits of $H_0$ on $\Gamma$ are finite, and hence that $\Gamma/H_0$ is locally finite. \cref{prop:tot.disc} therefore implies that $\Aut(\Gamma/H_0)$ is totally disconnected, and hence in particular that $L_{\Gamma/H_0}$ is. Thus $L_{\Gamma/H_0}$ is a totally disconnected Lie group, and hence discrete. Since $L_{\Gamma/H_0}$ has finite index in $G_{\Gamma/H_0}$, it follows that $G_{\Gamma/H_0}$ is also discrete.

Write $\pi:G\to G_{\Gamma/H_0}$ for the quotient homomorphism. Since $G_{\Gamma/H_0}$ is discrete, $\ker\pi$ is open and \cref{lem:discrete} implies that $G_{\Gamma/H_0}$ acts on $\Gamma/H_0$ with finite vertex stabilisers. Lemmas \ref{lem:image.of.S} and \ref{cor:S^n.cosets} therefore imply that $\pi(S)$ is finite. In particular, $\pi(S)$ is a finite symmetric generating set for $G_{\Gamma/H_0}$ containing the identity such that $\pi(S^{2n})$ is a $K^3$-approximate group. \cref{thm:bgt.grom.ag} therefore implies that there are subgroups $H_1,N_1\lhd G_{\Gamma/H_0}$ with $H_1\subset\pi(S^{8n})\cap N_1$ such that $N_1/H_1$ is nilpotent of rank and step $O_K(1)$, and such that $N_1$ has index $O_K(1)$ in $G_{\Gamma/H_0}$.

Set $H_2=\pi^{-1}(H_1)$ and $N=\pi^{-1}(N_1)$, and then let $H$ be the kernel of the induced action of $G$ on $\Gamma/H_2$. Note that \cref{lem:make.H.kernel} then implies that $H_2<H\lhd G$ and $H\subset S^{O_K(n)}$, and that \ref{item:H.is.ker} holds. Moreover, since $\ker\pi$ is an open subgroup of $H$, \cref{lem:open.subgrp} implies that $H$ is open and closed, and then \cref{lem:cay-ab.gen.set,lem:comp.closure} imply that $H$ is compact.

\cref{lem:cay-ab.gen.set} implies that $H(e)$ has diameter at most $O_K(n)$, and then \cref{lem:fibres.isom} implies that every other fibre $H(x)$ with $x\in\Gamma$ also has diameter at most $O_K(n)$, so \ref{item:proj.diam.prelim} holds. Note that $N_{\Gamma/H}$ is a quotient of $N_1/H_1$, and hence nilpotent of rank and step at most $O_K(1)$, and also that $N_{\Gamma/H}$ is normal and has index at most $O_K(1)$ in $G_{\Gamma/H}$. Moreover, $G_{\Gamma/H}$ is a quotient of $G_{\Gamma/H_0}$, and hence finitely generated and discrete. Thus \ref{item:v.nilp.prelim} holds, as does \ref{item:fin.stabs} by \cref{lem:discrete}.
\end{proof}

\begin{remark*}
Applying \cref{thm:prelim} to the closure of $G$ allows one to deduce a similar result without the assumption that $G$ is closed. This is also noted by Woess in his proof of \cref{thm:trof}. We perform such a reduction in detail in the proof of the stronger \cref{thm:main}, below.
\end{remark*}

\begin{proof}[Proof of \cref{thm:main}]
Since $G$ is transitive its closure $\overline G$ in $\Aut(\Gamma)$ certainly is. \cref{thm:prelim} therefore implies that $S_0=\{g\in\overline G:d(g(e),e)\le1\}$ is a symmetric generating set for $G$ containing the identity, that $S_0^{2n}$ is a $K^3$-approximate group, and that there exists a compact open subgroup $H_1\lhd\overline G$ such that
\begin{enumerate}[label=(\alph*)]
\item\label{item:a} every fibre of the projection $\Gamma\to\Gamma/H_1$ has diameter at most $O_K(n)$;
\item\label{item:b} $\overline G_{\Gamma/H_1}=\overline G/H_1$;
\item\label{item:c} $\overline G_{\Gamma/H_1}$ is a finitely generated group with a nilpotent normal subgroup $N_1$ of rank, step and index at most $O_K(1)$
; and
\item\label{item:d} every vertex stabiliser of the action of $\overline G_{\Gamma/H_1}$ on $\Gamma/H_1$ is finite.
\end{enumerate}
Write $\pi:\overline G\to\overline G/H_1$ for the quotient homomorphism, and set $N=\pi^{-1}(N_1)$.

First we claim that $\pi(S_0)$ is finite. Indeed, setting $S_1=\{g\in\overline G/H_1:d_{\Gamma/H_1}(gH_1(e),H_1(e))\le1\}$, \ref{item:b} and \cref{lem:image.of.S} imply that $\pi(S_0)=S_1$, and \ref{item:b}, \ref{item:d} and \cref{cor:S^n.cosets} imply that
\begin{equation}\label{eq:S_1.finite}
|S_1|<\infty.
\end{equation}
Thus $\pi(S_0)$ is a finite symmetric generating set for $\overline G/H_1$ containing the identity.

Since $S_0^{2n}$ is a $K^3$-approximate group, so is $\pi(S_0)^{2n}$. Since $\overline G_e\subset S_0$ by definition, applying \cref{prop:normal.closure} modulo $H_1$ therefore implies that
\begin{equation}\label{eq:G_e^G.in.S.H_1}
(\overline G_e\cap N)^{\overline G}\subset S_0^{O_K(n)}H_1.
\end{equation}
Set $H_2=(\overline G_e\cap N)^{\overline G}H_1$. Property \ref{item:a} implies in particular that $H_1(e)\subset B_\Gamma(e,O_K(n))$, and so \cref{lem:cay-ab.gen.set} implies that $H_1\subset S_0^{O_K(n)}$. This combines with  \eqref{eq:G_e^G.in.S.H_1} to imply that $H_2\subset S_0^{O_K(n)}$.
Next let $H_3$ be the kernel of the induced action of $\overline G$ on $\overline G_{\Gamma/H_2}$, noting that
\begin{equation}\label{eq:H_2.in.S^n}
H_3\subset S_0^{O_K(n)}
\end{equation}
and
\begin{equation}\label{eq:G/H_2}
\overline G_{\Gamma/H_3}=\overline G/H_3
\end{equation}
by \cref{lem:make.H.kernel}. Now set $H=H_3\cap G$, noting that since $H_3\lhd\overline G$ we also have $H\lhd G$, as required. \cref{lem:cay-ab.gen.set} and \eqref{eq:H_2.in.S^n} imply that $H(e)$ has diameter at most $O_K(n)$, and then \cref{lem:fibres.isom} implies that all fibres of the projection $\Gamma\to\Gamma/H$ have diameter equal to that of the fibre $H(e)$, and hence at most $O_K(n)$. This satisfies \ref{item:i}.

Since $H_3$ contains the subgroup $H_1$, which is open in $\overline G$, \cref{lem:open.subgrp} implies that $H_3$ is open in $\overline G$. It follows that $H$ is dense in $H_3$, and hence that $H(x)=H_3(x)$ for every $x\in\Gamma$. In particular, this implies that
\begin{equation}\label{eq:Gam/H=Gam/H_3}
\Gamma/H=\Gamma/H_3,
\end{equation}
and hence that
\begin{equation}\label{eq:G/H<G/H_2}
G_{\Gamma/H}=G_{\Gamma/H_3}.
\end{equation} 
It follows from \eqref{eq:G/H_2} that $G_{\Gamma/H_3}=G/H$, and so \ref{item:ii} is satisfied.

Since $H_1\subset H_3$, the group $\overline G_{\Gamma/H_3}$ is a quotient of $\overline G_{\Gamma/H_1}$. It therefore follows from \ref{item:c} that $\overline G_{\Gamma/H_3}$ has a nilpotent subgroup of rank, step and index at most $O_K(1)$. Since $G_{\Gamma/H_3}\subset\overline G_{\Gamma/H_3}$, \eqref{eq:G/H<G/H_2} therefore implies that $G_{\Gamma/H}$ also has a nilpotent subgroup of rank, step and index at most $O_K(1)$ (this uses the well-known fact that if $N$ is a nilpotent group of rank $r$ and step $s$ then every subgroup of $N$ has rank $O_{r,s}(1)$). This satisfies \ref{item:iii}.

Set $S_2=\{g\in\overline G_{\Gamma/H_3}:d_{\Gamma/H_3}(g(H_3(e)),H_3(e))\le1\}$, and write $\tau:\overline G/H_1\to\overline G/H_3$ for the quotient homomorphism. It follows from \eqref{eq:G/H_2} and \cref{lem:image.of.S} that $S_2=\tau(S_1)$, and hence from \eqref{eq:S_1.finite} that $S_2$ is finite. It follows from \eqref{eq:G/H<G/H_2} that $S\subset S_2$, and so $S$ is also finite. \cref{lem:cay-ab.gen.set} implies that $S$ is a symmetric generating set for $G_{\Gamma/H}$ containing the identity, and so \ref{item:iv} is satisfied.

\cref{lem:image.of.S} combines with \eqref{eq:G/H_2} to imply that the stabiliser $(\overline G_{\Gamma/H_3})_{H_3(e)}=\tau\circ\pi(\overline G_e)$. Since $\ker(\tau\circ\pi)$ contains $\overline G_e\cap N$, this implies that $(\overline G_{\Gamma/H_3})_{H_3(e)}$ is a homomorphic image of $\overline G_e/(\overline G_e\cap N)$. This in turn is isomorphic to a subgroup of $\overline G/N$, which has size $O_K(1)$ by \ref{item:c}, and so $|(\overline G_{\Gamma/H_3})_{H_3(e)}|\ll_K1$. It follows from \eqref{eq:Gam/H=Gam/H_3} that $(G_{\Gamma/H})_{H(e)}\subset (\overline G_{\Gamma/H_3})_{H_3(e)}$, and so \cref{lem:stab.normal} implies that \ref{item:v} is satisfied.

Finally, \ref{item:i} and \cref{lem:cay-ab.gen.set} combine with \cref{lem:quotient.QI} to show that the quotient map $\Gamma\to\Gamma/H$ is a $(1,O_K(n))$-quasi-isometry, whilst \cref{lem:aut.QI} implies that there is a $(1,1)$-quasi-isometry $\Cay(G_{\Gamma/H},S)\to\Gamma/H$, so \ref{item:vi} follows from \cref{lem:QI.elem}.
\end{proof}

\section{The structure of vertex-transitive graphs of polynomial growth}
In this section we prove \cref{cor:trof,cor:trof.finite}. We start by recording the following easy and well-known lemma.
\begin{lemma}\label{lem:poly.pigeon}
Let $d>0$, let $q\in\N$, and let $\alpha,\beta\in(0,1)$ be such that $\alpha<\beta$. Then there exists $K=K_{d,q,\alpha,\beta}$ such that if $\Gamma$ is a graph with a distinguished vertex $e$, and
\begin{equation}\label{eq:graph.poly.growth}
|B_\Gamma(e,n)|\le n^d|B_\Gamma(e,1)|
\end{equation}
for some $n\ge q^{2/(\beta-\alpha)}$, then there exists $m\in\N$ with $\lfloor n^\alpha\rfloor\le m\le n^\beta$ such that $|B_\Gamma(e,qm)|\le K|B_\Gamma(e,m)|$. Indeed, we may take $K=q^{2d/(\beta-\alpha)}$.
\end{lemma}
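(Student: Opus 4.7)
The plan is a pigeonhole argument along a geometric progression: assuming the conclusion fails for every allowed $m$, I will iterate the failed inequality and contradict the single-scale polynomial bound~\eqref{eq:graph.poly.growth}.

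Concretely, set $m_0 = \lfloor n^\alpha \rfloor$ (which is at least $1$) and let $J$ be the largest non-negative integer with $q^J m_0 \le n^\beta$. Suppose for contradiction that $|B_\Gamma(e, qm)| > K |B_\Gamma(e, m)|$ for every integer $m \in [m_0, n^\beta]$. Each $q^j m_0$ with $0 \le j \le J-1$ lies in $[m_0, n^\beta]$, so telescoping the failed inequality along $m_0, qm_0, \ldots, q^J m_0$ yields
\[
|B_\Gamma(e, q^J m_0)| > K^J |B_\Gamma(e, m_0)| \ge K^J |B_\Gamma(e, 1)|.
\]
Since $q^J m_0 \le n^\beta \le n$, the hypothesis~\eqref{eq:graph.poly.growth} gives the reverse bound $|B_\Gamma(e, q^J m_0)| \le n^d |B_\Gamma(e,1)|$, and hence $K^J < n^d$.

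It remains to lower-bound $J$. Maximality of $J$ gives $q^{J+1} m_0 > n^\beta$, so combining with $m_0 \le n^\alpha$ yields $q^{J+1} > n^{\beta - \alpha}$, i.e.\ $(J+1)\log q > (\beta - \alpha)\log n$. The assumption $n \ge q^{2/(\beta - \alpha)}$ forces $(\beta - \alpha)\log n / \log q \ge 2$, so passing from $J+1$ to the integer $J$ costs at most a factor of two and one obtains $J \ge (\beta - \alpha)\log n / (2\log q)$. Substituting this into $K^J < n^d$ and rearranging gives $K < q^{2d/(\beta - \alpha)}$, so taking $K = q^{2d/(\beta - \alpha)}$ produces the desired contradiction and supplies an $m$ as required.

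I do not anticipate a substantive obstacle: the argument is a direct geometric pigeonhole, and the only real subtlety is the book-keeping needed to pass from the strict bound on $J+1$ to an integer lower bound on $J$ without deteriorating the constant — precisely the role of the hypothesis $n \ge q^{2/(\beta - \alpha)}$.
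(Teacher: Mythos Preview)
Your argument is correct and is essentially the same geometric pigeonhole as the paper's: both set up the sequence $q^j\lfloor n^\alpha\rfloor$, telescope the assumed failure of the inequality, and contradict \eqref{eq:graph.poly.growth} after lower-bounding the number of steps by $\tfrac{1}{2}(\beta-\alpha)\log_q n$ using the hypothesis $n\ge q^{2/(\beta-\alpha)}$. The only cosmetic difference is that you take $J$ maximal with $q^J\lfloor n^\alpha\rfloor\le n^\beta$, whereas the paper picks an $r$ with $q^r\lfloor n^\alpha\rfloor\in[n^{(\alpha+\beta)/2},n^\beta]$; the resulting bounds are identical.
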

\begin{proof}
The assumption that $n\ge q^{2/(\beta-\alpha)}$ implies that $n^{(\beta-\alpha)/2}\ge q$, and in particular that there exists some $r\in\N$
such that
\begin{equation}\label{eq:r>log.q}
n^{(\alpha+\beta)/2}\le q^r\lfloor n^\alpha\rfloor\le n^\beta.
\end{equation}
Suppose the lemma does not hold for a given value of $K$, and in particular that
$|B_\Gamma(e,q^k\lfloor n^\alpha\rfloor)|>K|B_\Gamma(e,q^{k-1}\lfloor n^\alpha\rfloor)|$
for every $k=1,\ldots,r$, and hence that $|B_\Gamma(e,\lfloor n^\beta\rfloor)|>K^r|B_\Gamma(e,\lfloor n^\alpha\rfloor)|$. It follows from \eqref{eq:r>log.q} that $r\ge\frac{1}{2}(\beta-\alpha)\log_qn$, and hence that
\begin{align*}
|B_\Gamma(e,n)|&\ge|B_\Gamma(e,\lfloor n^\beta\rfloor)|\\
    &>K^{\frac{1}{2}(\beta-\alpha)\log_qn}|B_\Gamma(e,\lfloor n^\alpha\rfloor)|\\
    &=n^{\frac{1}{2}(\beta-\alpha)\log_qK}|B_\Gamma(e,\lfloor n^\alpha\rfloor)|\\
    &\ge n^{\frac{1}{2}(\beta-\alpha)\log_qK}|B_\Gamma(e,1)|,
\end{align*}
which by \eqref{eq:graph.poly.growth} implies that $K<q^{2d/(\beta-\alpha)}$. The lemma therefore holds with $K=q^{2d/(\beta-\alpha)}$.
\end{proof}

\begin{proof}[Proof of \cref{cor:trof}]
Provided $n$ is large enough in terms of $\lambda$ only, \cref{lem:poly.pigeon} implies that there exists $m\in\N$ with $\lfloor n^{\lambda/4}\rfloor\le m\le n^{\lambda/2}$ such that $\beta_\Gamma(5m)\le3^{8d/\lambda}\beta_\Gamma(m)$. Provided $n$ is large enough in terms of $d$ and $\lambda$, the desired result follows from \cref{thm:main}.
\end{proof}

Before we prove \cref{cor:trof.finite.cyc}, it will be convenient to record a slight variant of Breuillard and the second author's corresponding result in the special case of a Cayley graph. If $G$ is a finite group, $H\lhd G$, and $S$ is a symmetric generating set for $G$ containing the identity, then for brevity we write $\diam_S(G/H)$ to mean $\diam_{\pi(S)}(G/H)$, where $\pi:G\to G/H$ is the quotient homomorphism.

\begin{theorem}\label{thm:bt}
For every $\delta>0$ there exists $n_0=n_0(\delta)\ge1$ such that the following holds. Let $G$ be a finite group with a symmetric generating set $S$ containing $1$, and suppose that $\diam_S(G)\ge n_0$ and
\[
\diam_S(G)\ge\left(\frac{|G|}{|S|}\right)^\delta.
\]
Then there exist subgroups $U\lhd G'\lhd G$ such that $G'$ has index $O_\delta(1)$ in $G$ and $G'/U$ is cyclic, and a symmetric generating set $S'$ for $G'$ satisfying $S'\subset S^{O_\delta(1)}$ such that $d_{S'}\le d_S$ on $G'$, and such that $\diam_{S'}(G'/U)\gg_\delta\diam_S(G)$.
\end{theorem}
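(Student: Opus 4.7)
The plan is to reduce to Breuillard and the second author's \cite[Theorem 4.1]{bt}, which under the same hypotheses supplies subgroups $U \lhd G' \lhd G$ with $[G:G'] \leq k$ for some $k = O_\delta(1)$, with $G'/U$ cyclic, and with $\diam_{\pi(S)}(G'/U) \gg_\delta \diam_S(G)$, where $\pi : G \to G/U$ denotes the quotient map. What distinguishes \cref{thm:bt} from the result as originally stated in \cite{bt} is simply the explicit control of the generating set $S' \subset S^{O_\delta(1)}$ for $G'$, together with the inequality $d_{S'} \leq d_S$ on $G'$. This is a routine coset-representative construction, so the bulk of the proof is purely bookkeeping.

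Specifically, by \cref{lem:cosets.increasing} there is a set $T \subset S^{k-1}$ of left-coset representatives for $G'$ in $G$ with $1 \in T$. Define
$$S' := \bigl(\{t_1 s t_2^{-1} : t_1, t_2 \in T,\ s \in S\} \cap G'\bigr) \cup \{1\}.$$
Then $S' \subset S^{2k-1} \subset S^{O_\delta(1)}$, and $S'$ is symmetric (using the symmetry of $S$, and enlarging $T$ to $T \cup T^{-1}$ if necessary). The claim that $S'$ generates $G'$, together with $d_{S'} \leq d_S$ on $G'$, follows from the following telescoping argument: given $g \in G'$ with $d_S(1,g) = m$, pick a geodesic expression $g = s_1 \cdots s_m$ with $s_i \in S$, let $\tau(i) \in T$ be the representative of the coset $s_1 \cdots s_i G'$ (with $\tau(0) = \tau(m) = 1$), and observe
$$g = \prod_{i=1}^{m} \tau(i-1)\, s_i\, \tau(i)^{-1},$$
a product of $m$ elements of $S'$. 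Thus $d_{S'}(1, g) \leq m$, and translation invariance extends this to $d_{S'} \leq d_S$ throughout $G'$.

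It remains to transfer the diameter bound on $G'/U$ from the generating set $\pi(S)$ of $G/U$ to the generating set $\sigma(S')$ of $G'/U$, where $\sigma$ denotes the restriction of $\pi$ to $G'$. Indeed, any word of length $N$ in $\pi(S)$ representing an element of $G'/U$ can be partitioned into at most $N$ consecutive blocks, each of length at most $k$, lying within a single coset of $G'/U$ in $G/U$; each such block corresponds to a single step in $\sigma(S \cap G') \subset \sigma(S')$ (this is essentially the same telescoping trick used above, applied modulo $U$). Consequently $\diam_{S'}(G'/U) \geq \diam_{\pi(S)}(G'/U) / k \gg_\delta \diam_S(G)$, using $k = O_\delta(1)$. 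The main obstacle in executing this plan is the bookkeeping involved in the last step — in particular, aligning the formulation of the diameter bound extracted from \cite[Theorem 4.1]{bt} with the variant above — but since only $O_\delta(1)$-factor losses appear, they are absorbed into the $\gg_\delta$ notation without further effort.
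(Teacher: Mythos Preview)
Your approach is essentially the same as the paper's: the paper simply observes that the statement is \cite[Theorem 4.1 (3)]{bt} together with \cite[Lemma 4.2]{bt}, with the inequality $d_{S'}\le d_S$ coming from \cite[Lemma 7.2.2]{hall}, which is precisely the Schreier-generator construction you spell out. So your construction of $S'$ and the telescoping argument that $d_{S'}\le d_S$ on $G'$ are exactly what the cited lemmas do, and this part of your write-up is fine.

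The one genuine wrinkle is your final ``transfer'' paragraph. You describe the input from \cite{bt} as a bound $\diam_{\pi(S)}(G'/U)\gg_\delta\diam_S(G)$ with $\pi:G\to G/U$, but the theorem only asserts $U\lhd G'$, not $U\lhd G$, so $G/U$ need not be a group and $\pi(S)$ is not well defined as stated. Moreover the block-partition argument you sketch is garbled as written (the direction of the inequality and the role of the blocks do not quite match). In fact this whole step is unnecessary: \cite[Theorem 4.1 (3)]{bt} already states the diameter bound for $G'/U$ with respect to the very generating set $S'$ that arises from \cite[Lemma 4.2]{bt}, i.e.\ the Schreier generators you constructed. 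So once you have built $S'$ and verified $S'\subset S^{O_\delta(1)}$ and $d_{S'}\le d_S$, the diameter conclusion $\diam_{S'}(G'/U)\gg_\delta\diam_S(G)$ comes directly from \cite{bt} with no transfer required. Drop the last paragraph and the proof is clean.
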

\begin{proof}
This is essentially \cite[Theorem 4.1 (3)]{bt}, although some of the desired properties of $S'$ are stated separately in \cite[Lemma 4.2]{bt}. The only property not stated explicitly in either \cite[Theorem 4.1 (3)]{bt} or \cite[Lemma 4.2]{bt} is that $d_{S'}\le d_S$ on $G'$, but this is given by the result \cite[Lemma 7.2.2]{hall} referenced in the proof of \cite[Lemma 4.2]{bt}.
\end{proof}

\begin{proof}[Proof of \cref{cor:trof.finite,cor:trof.finite.cyc}]
Writing $\gamma=\diam(\Gamma)$, the hypothesis \eqref{eq:trof.finite.hyp} translates as
\[
\beta_\Gamma(\gamma)\le\gamma^{1/\delta}\beta_\Gamma(1).
\]
Provided $\gamma$ is large enough in terms of $\delta$ only, \cref{cor:trof} therefore gives a normal subgroup $H_0\lhd G$ such that
\begin{enumerate}[label=(\alph*)]
\item\label{item:c.a} every fibre of the projection $\Gamma\to\Gamma/H_0$ has diameter at most $\gamma^{1/2}$;
\item\label{item:c.b} $G_{\Gamma/H_0}=G/H_0$;
\item\label{item:c.c} the set $S_1=\{g\in G_{\Gamma/H_0}:d_{\Gamma/H_0}(g(H_0(e)),H_0(e))\le1\}$ is a symmetric generating set for $G_{\Gamma/H_0}$; and
\item\label{item:c.d} every vertex stabiliser of the action of $G_{\Gamma/H_0}$ on $\Gamma/H_0$ has cardinality $O_\delta(1)$.
\end{enumerate}
We will use property \ref{item:c.b} implicitly throughout this proof in order to interchange $G_{\Gamma/H_0}$ and $G/H_0$. Setting $S_0=\{g\in G:d(g(e),e)\le1\}$, property \ref{item:c.a} and \cref{lem:cay-ab.gen.set} imply that
\begin{equation}\label{eq:trof.fin.H0}
H_0\subset S_0^{\lfloor\gamma^{1/2}\rfloor}.
\end{equation}
Write $\pi:G\to G/H_0$ and $\psi:\Gamma\to\Gamma/H_0$ for the quotient maps. \cref{lem:ball.preimage} combines with \eqref{eq:trof.fin.H0} to imply that $\psi^{-1}(B_{\Gamma/H_0}(H_0(e),\gamma-\lfloor\gamma^{1/2}\rfloor-1))\subset B_\Gamma(e,\gamma-1)\subsetneqq\Gamma$,
and hence that $\diam(\Gamma/H_0)\ge\gamma-\gamma^{1/2}$. \cref{lem:cay-ab.gen.set} implies that $\diam_{S_1}(G/H_0)=\diam(\Gamma/H_0)$, and so as long as $\gamma\ge4$ we conclude that
\begin{equation}\label{eq:diam.quot.large}
\diam_{S_1}(G/H_0)\ge\gamma-\gamma^{1/2}\ge\gamma^{1/2}.
\end{equation}
On the other hand, \cref{lem:cay-ab.gen.set,cor:growth.inherit} imply that
\[
\frac{|G|}{|S_0|}=\frac{|\Gamma|}{\beta_{\Gamma}(1)}.
\]
Since $|G/H_0|=|G|/|H_0|$ and $|S_1|\ge|S_0|/|H_0|$ by \cref{lem:image.of.S}, this combines with \eqref{eq:trof.finite.hyp} to imply that
\[
\frac{|G/H_0|}{|S_1|}\le\gamma^{1/\delta}.
\]
This in turn combines with \eqref{eq:diam.quot.large} to imply that
\begin{equation}\label{eq:trof.fin.red.cay}
\diam_{S_1}(G/H_0)\ge\left(\frac{|G/H_0|}{|S_1|}\right)^{\delta/2}.
\end{equation}

We first prove that the conclusions of \cref{cor:trof.finite} hold. Provided $\gamma$ is large enough in terms of $\delta$ and $\lambda$, applying \cite[Theorem 4.1 (2)]{bt} and \cref{lem:image.of.S} therefore implies that there exists $H\lhd G$ satisfying $H_0\subset H\subset S_0^{\lfloor\gamma^{\frac{1}{2}+\lambda/2}\rfloor}H_0$ such that $G/H$ has an abelian subgroup of rank and index at most $O_{\delta,\lambda}(1)$, yielding property \ref{item:c.iii.fin}. By \eqref{eq:trof.fin.H0}, provided $\gamma$ is large enough in terms of $\lambda$ we may conclude that
\begin{equation}\label{eq:trof.fin.H}
H\subset S_0^{\lfloor\gamma^{\frac{1}{2}+\frac{2}{3}\lambda}\rfloor},
\end{equation}
which in particular gives \ref{item:c.i.fin} by \cref{lem:cay-ab.gen.set}. \cref{lem:make.H.kernel} implies that we may also assume that \ref{item:c.ii.fin} holds. Property \ref{item:c.iv.fin} follows from \cref{lem:cay-ab.gen.set}, whilst \ref{item:c.v.fin} follows from \ref{item:c.d} and \cref{lem:image.of.S}. Finally, \eqref{eq:trof.fin.H} combines with \cref{lem:quotient.QI} to show that the quotient map $\Gamma\to\Gamma/H$ is a $(1,\gamma^{\frac{1}{2}+\frac{2}{3}\lambda})$-quasi-isometry, whilst \cref{lem:aut.QI} implies that there is a $(1,1)$-quasi-isometry $\Cay(G_{\Gamma/H},S)\to\Gamma/H$, so provided $\gamma$ is large enough in terms of $\lambda$, \ref{item:c.vi.fin} follows from \cref{lem:QI.elem}. This proves \cref{cor:trof.finite}.

We now prove that the conclusions of \cref{cor:trof.finite.cyc} hold. Provided $\gamma$ is large enough in terms of $\delta$, \eqref{eq:trof.fin.red.cay} means that we may apply \cref{thm:bt}. This gives a normal subgroup $G'$ of index at most $O_\delta(1)$ in $G$ (as required by conclusion \ref{item:fin.ind}), and a normal subgroup $U\lhd G'$ containing $H_0$ such that $G'/U$ is cyclic (as required by conclusion \ref{item:cyclic}). It also gives a symmetric generating set $S_1'$ for $\pi(G')$ containing the identity such that
\begin{equation}\label{eq:biLip}
d_{S_1}\ll_\delta d_{S_1'}\le d_{S_1}
\end{equation}
on $\pi(G')$ and such that $\diam_{S_1'}(G'/U)\gg_\delta\diam_{S_1}(G/H_0)$. Taking $S'$ to be the image of $S_1'$ in $G'/U$, if $\gamma$ is large enough then \eqref{eq:diam.quot.large} then implies that
$\diam_{S'}(G'/U)\gg_\delta\gamma$, giving conclusion \ref{item:diam}.
Conclusion \ref{item:small.stabs} follows from \cref{lem:image.of.S} and \ref{item:c.d}. Finally, to prove \ref{item:biLip}, note that for $g_1,g_2\in G'$ and $\lambda>0$ we have
\begin{align*}
d_{G'(e)/U}(g_1U(e),g_2U(e))\ll_\delta\lambda
   &\iff\min_{u\in U}d_{\Gamma/H_0}(g_2^{-1}g_1uH_0(e),H_0(e))\ll_\delta\lambda&\text{(by \cref{lem:image.of.S})}\\
   &\iff\min_{u\in U}d_{S_1}(g_2^{-1}g_1uH_0,H_0)\ll_\delta\lambda&\text{(by \cref{lem:cay-ab.gen.set})}\\
   &\iff\min_{u\in U}d_{S_1'}(g_2^{-1}g_1uH_0,H_0)\ll_\delta\lambda&\text{(by \eqref{eq:biLip})}\\
   &\iff d_{S'}(g_1U,g_2U)\ll_\delta\lambda.
\end{align*}
\end{proof}

\section{Growth of vertex-transitive graphs with one ball of polynomial size}
In this section we prove \cref{cor:tao.growth,cor:persist.abs,cor:mod.growth}
\begin{prop}\label{prop:red.to.cayley}
In the setting of \cref{thm:main} we have
\[
\frac{|(G_{\Gamma/H})_{H(e)}|}{|H(e)|}\beta_\Gamma(m)\le|S^m|\ll_K\frac{|(G_{\Gamma/H})_{H(e)}|}{|H(e)|}\beta_\Gamma(m+O_K(n))
\]
for every $m\in\N$. In particular, $|S^m|\ll_K\beta_\Gamma(m+O_K(n))$ and
\[
\frac{|S^m|}{|S^{m'}|}\ll_K\frac{\beta_\Gamma(m+O_K(n))}{\beta_\Gamma(m')}
\]
for every $m,m'\in\N$.
\end{prop}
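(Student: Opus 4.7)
The plan is to combine the two ways of enumerating elements of $S^m$: one via the orbit-stabiliser formula for the action of $G_{\Gamma/H}$ on $\Gamma/H$, and the other via the preimage of a ball in $\Gamma/H$ under the quotient map $\psi:\Gamma\to\Gamma/H$.

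First I would apply \cref{cor:S^n.cosets} to the graph $\Gamma/H$ with the transitive subgroup $G_{\Gamma/H}<\Aut(\Gamma/H)$ and distinguished vertex $H(e)$. Since conclusion \ref{item:iv} of \cref{thm:main} identifies $S$ as exactly the set $\{g\in G_{\Gamma/H}:d(g(H(e)),H(e))\le 1\}$, this yields
\[
|S^m|=|(G_{\Gamma/H})_{H(e)}|\cdot\beta_{\Gamma/H}(m).
\]
So the proposition reduces to relating $\beta_{\Gamma/H}(m)$ to $\beta_\Gamma(m)$.

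Next I would use \cref{lem:fibres.isom} applied to $G\lhd G$ to note that every fibre of $\psi$ has the same cardinality $|H(e)|$, so $|\psi^{-1}(X)|=|H(e)|\cdot|X|$ for any union of fibres $X\subset\Gamma/H$. Combining this with \cref{lem:ball.preimage} — which applies since conclusion \ref{item:i} of \cref{thm:main} together with \cref{lem:cay-ab.gen.set} gives $H\subset S_0^{O_K(n)}$, where $S_0=\{g\in G:d(g(e),e)\le1\}$ — yields
\[
\beta_\Gamma(m)\le|H(e)|\cdot\beta_{\Gamma/H}(m)\le\beta_\Gamma(m+O_K(n)).
\]
Multiplying through by $|(G_{\Gamma/H})_{H(e)}|/|H(e)|$ and substituting the first displayed equation produces the main two-sided bound.

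For the ``in particular'' statements, the first follows by observing that conclusion \ref{item:v} of \cref{thm:main} gives $|(G_{\Gamma/H})_{H(e)}|\ll_K 1$ while trivially $|H(e)|\ge 1$. For the ratio $|S^m|/|S^{m'}|$, I would apply the upper bound on $|S^m|$ and the lower bound on $|S^{m'}|$, so the factor $|(G_{\Gamma/H})_{H(e)}|/|H(e)|$ cancels exactly and the ratio bound follows directly. No step here poses a serious obstacle; the only thing to be careful about is that the constant $k$ in the application of \cref{lem:ball.preimage} really is $O_K(n)$, which is exactly what conclusion \ref{item:i} of \cref{thm:main} (combined with \cref{lem:cay-ab.gen.set}) delivers.
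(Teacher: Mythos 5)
Your proof is correct and takes essentially the same route as the paper: identify $|S^m|=|(G_{\Gamma/H})_{H(e)}|\,\beta_{\Gamma/H}(m)$, then sandwich $\beta_{\Gamma/H}(m)$ between $\beta_\Gamma(m)/|H(e)|$ and $\beta_\Gamma(m+O_K(n))/|H(e)|$ using \cref{lem:fibres.isom} and \cref{lem:ball.preimage}. The only cosmetic difference is that you derive the first identity directly from \cref{cor:S^n.cosets} applied in $\Gamma/H$, whereas the paper cites \cref{cor:growth.inherit} (with counting measure on the discrete group $G_{\Gamma/H}$); these are two phrasings of the same counting argument, since \cref{cor:growth.inherit} is itself proved from \cref{cor:S^n.cosets}.
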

\begin{proof}
Let $m\in\N$. \cref{cor:growth.inherit} implies that
\begin{equation}\label{eq:ball.x.stab}
|S^m|=|(G_{\Gamma/H})_{H(e)}|\,\beta_{\Gamma/H}(m)
\end{equation}
for every $m\in\N$. \cref{lem:fibres.isom} implies that every fibre of the projection $\psi:\Gamma\to\Gamma/H$ has size $|H(e)|$. Since $\psi(B_\Gamma(e,m))\subset B_{\Gamma/H}(H(e),m)$, this implies that
\begin{equation}\label{eq:ball.lb}
\beta_{\Gamma/H}(m)\ge\frac{\beta_\Gamma(m)}{|H(e)|}.
\end{equation}
The fact that the fibres of $\psi$ all have size $|H(e)|$ also implies that
\[
|\psi^{-1}(B_{\Gamma/H}(H(e),m))|=|H(e)|\,\beta_{\Gamma/H}(m),
\]
 and so \cref{lem:cay-ab.gen.set,lem:ball.preimage} imply that
\begin{equation}\label{eq:ball.ub}
\beta_{\Gamma/H}(m)\le\frac{\beta_\Gamma(m+O_K(n))}{|H(e)|}.
\end{equation}
The desired result follows from combining \eqref{eq:ball.x.stab}, \eqref{eq:ball.lb} and \eqref{eq:ball.ub}.
\end{proof}

The proofs of \cref{cor:tao.growth,cor:persist.abs} begin in roughly the same way, so to avoid repitition we isolate the common part of the arguments as follows.

\begin{lemma}\label{lem:poly.reduc.to.cay}
Given $d>0$ there exists $n_0=n_0(d)$ such that if $\Gamma$ is a connected locally finite vertex-transitive graph satisfying $\beta_\Gamma(n)\le n^{d+1}\beta_\Gamma(1)$ for some $n\ge n_0$ then there exists a group with a finite symmetric generating set $S$ containing the identity such that
\begin{equation}\label{eq:persist.cay}
|S^r|\ll_\fd\beta_\Gamma(r+\textstyle\lfloor\frac{n}{2}\rfloor)
\end{equation}
and
\begin{equation}\label{eq:persist.cay.2}
\frac{|S^{r'}|}{|S^r|}\ll_\fd\frac{\beta_\Gamma(r'+\textstyle\lfloor\frac{n}{2}\rfloor)}{\beta_\Gamma(r)}
\end{equation}
for every $r,r'\in\N$.
\end{lemma}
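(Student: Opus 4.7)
The plan is to locate an intermediate scale $m$, much smaller than $n$, at which $\Gamma$ exhibits bounded tripling, apply \cref{thm:main} at that scale to extract a Cayley-like quotient, and then read off the desired bounds from \cref{prop:red.to.cayley}.

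First I would use \cref{lem:poly.pigeon} to find the scale $m$. Applied with $q=3$, $\alpha = 1/3$, $\beta = 1/2$, and with $d$ replaced by $d+1$, this lemma asserts that provided $n \ge 3^{12}$ there exists $m \in [\lfloor n^{1/3}\rfloor, n^{1/2}]$ such that $\beta_\Gamma(3m) \le K \beta_\Gamma(m)$, where $K = K_d := 3^{12(d+1)}$ depends only on $d$. I would then require $n_0 = n_0(d)$ large enough that $\lfloor n^{1/3}\rfloor$ exceeds the threshold $n_0(K)$ appearing in \cref{thm:main} (permissible since $K$ itself depends only on $d$).

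Next I would apply \cref{thm:main} to $\Gamma$ at scale $m$, taking the transitive subgroup to be $G = \Aut(\Gamma)$. This produces a normal subgroup $H \lhd G$ and a finite symmetric generating set $S = \{g \in G_{\Gamma/H} : d_{\Gamma/H}(g(H(e)), H(e)) \le 1\}$ for $G_{\Gamma/H}$ containing the identity, such that every fibre of $\Gamma \to \Gamma/H$ has diameter at most $C_K m$ for some $C_K$ depending only on $K$, hence only on $d$. Then \cref{prop:red.to.cayley}, applied with $n$ replaced by $m$, yields
\[
|S^r| \ll_K \beta_\Gamma\bigl(r + O_K(m)\bigr)
\qquad \text{and} \qquad
\frac{|S^{r'}|}{|S^r|} \ll_K \frac{\beta_\Gamma\bigl(r' + O_K(m)\bigr)}{\beta_\Gamma(r)}
\]
for every $r, r' \in \N$.

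Finally, since $m \le n^{1/2}$ and the constant $O_K(\cdot) = O_d(\cdot)$ depends only on $d$, enlarging $n_0 = n_0(d)$ if necessary guarantees $O_K(m) \le \lfloor n/2 \rfloor$, and the desired bounds \eqref{eq:persist.cay} and \eqref{eq:persist.cay.2} follow by monotonicity of $\beta_\Gamma$ (absorbing the $K$-dependence into the $\fd$-dependence in the $\ll_\fd$ notation, which is justified since $K = K_d$). The argument is essentially a bookkeeping exercise; there is no substantive obstacle, since the real work has been done in \cref{thm:main} and \cref{prop:red.to.cayley}. The only point requiring minor care is to ensure that $m$ is simultaneously large enough for \cref{thm:main} to apply and small enough that $O_K(m) \le \lfloor n/2\rfloor$, which is precisely why \cref{lem:poly.pigeon} is invoked with $\beta$ strictly less than $1$.
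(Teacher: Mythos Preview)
Your proposal is correct and follows essentially the same approach as the paper: invoke \cref{lem:poly.pigeon} to find an intermediate scale with bounded tripling, apply \cref{thm:main} there with $G=\Aut(\Gamma)$, and read off the conclusion from \cref{prop:red.to.cayley}. The only difference is cosmetic---the paper takes $\alpha=1/2$, $\beta=3/4$ in \cref{lem:poly.pigeon} rather than your $\alpha=1/3$, $\beta=1/2$---and either choice works for exactly the reason you identify in your final sentence.
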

\begin{proof}
Provided $n\ge 3^8$, \cref{lem:poly.pigeon} implies that there exists $n'\in\N$ with $\lfloor n^{1/2}\rfloor\le n'\le n^{3/4}$ such that $\beta_\Gamma(3n')\ll_d\beta_\Gamma(n')$. Provided $n$ is large enough in terms of $d$ only, applying \cref{thm:main} and \cref{prop:red.to.cayley} to the whole of $\Aut(\Gamma)$ then gives $S$ with the desired properties.
\end{proof}

In proving \cref{cor:tao.growth} we use the following elementary property of continuous piecewise-monomial functions.
\begin{lemma}\label{lem:p/w.mono}
Let $c>1$ and $d\ge0$. Let $f:[1,\infty)\to[1,\infty)$ be a non-decreasing continuous piecewise-monomial function with finitely many pieces, each of which has degree at most $d$. Then $f(cx)\le c^df(x)$ for every $x\ge1$.
\end{lemma}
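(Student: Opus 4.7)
The plan is to reduce the inequality to a telescoping product of single-piece ratios. Fix $x\ge 1$ and $c>1$, and let $1=x_0<x_1<\cdots<x_k=\infty$ be the partition associated with $f$, with $f|_{[x_{i-1},x_i)}(t)=C_it^{d_i}$ and $d_i\le d$.

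First I would record the single-piece estimate: if $a,b$ lie in the closure of the same piece $[x_{i-1},x_i]$ with $a\le b$, then continuity of $f$ at the right endpoint means $f(a)$ and $f(b)$ are given by the monomial $C_it^{d_i}$, so
\[
\frac{f(b)}{f(a)}=\left(\frac{b}{a}\right)^{d_i}\le\left(\frac{b}{a}\right)^d,
\]
where the last inequality uses $b/a\ge 1$ and $d_i\le d\ge 0$. (If $a=0$ were possible this would fail, but we have $a\ge x\ge 1$.)

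Next, I would handle the case that $x$ and $cx$ lie in the same piece directly by the above. Otherwise, let $i$ be the piece containing $x$ and $j>i$ the piece containing $cx$, and insert the breakpoints $y_0=x<x_i\le x_{i+1}\le\cdots\le x_{j-1}<cx=y_m$. Each consecutive pair $(y_\ell,y_{\ell+1})$ lies in the closure of a single piece (by construction), so the single-piece estimate applies to give $f(y_{\ell+1})/f(y_\ell)\le (y_{\ell+1}/y_\ell)^d$. Telescoping,
\[
\frac{f(cx)}{f(x)}=\prod_{\ell=0}^{m-1}\frac{f(y_{\ell+1})}{f(y_\ell)}\le\prod_{\ell=0}^{m-1}\left(\frac{y_{\ell+1}}{y_\ell}\right)^d=\left(\frac{cx}{x}\right)^d=c^d,
\]
which is exactly the desired inequality. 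There is no real obstacle: the only mild point is to use continuity to evaluate $f$ unambiguously at the breakpoints $x_i$, which is built into the hypothesis that $f$ is continuous and piecewise-monomial.
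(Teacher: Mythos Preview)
Your proof is correct and follows essentially the same approach as the paper. The paper phrases the argument as an induction on the number of breakpoints lying in the interval $(x,cx)$, splitting at one such breakpoint $c'x$ and applying the inductive hypothesis to each subinterval, whereas you unroll this induction into a single telescoping product over all the breakpoints; the underlying idea is identical.
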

\begin{proof}
This statement is completely trivial if $x$ and $cx$ lie in the same monomial piece of $f$, and if there exists $c'\in(1,c)$ such that $c'x$ is the boundary of two monomial pieces then we have $f(cx)\le(c/c')^df(c'x)\le c^df(x)$ by induction on the number of such boundaries lying in $(x,cx)$.
\end{proof}

\begin{proof}[Proof of \cref{cor:tao.growth}]
Suppose that $\beta_\Gamma(n)\le n^d\beta_\Gamma(1)$, and let $S$ be the set obtained by applying \cref{lem:poly.reduc.to.cay}. Taking $r=1$ and $r'=\lceil n/2\rceil$ in \eqref{eq:persist.cay.2}, it follows that $|S^{\lceil n/2\rceil}|\le n^d|S|\le\textstyle\lceil\frac{n}{2}\rceil^{2d}|S|$,
the last inequality being valid as long as $n\ge3$. Provided $n$ is large enough in terms of $d$ only, \cite[Theorem 1.9]{tao.growth} therefore gives a non-decreasing continuous piecewise-monomial function $f:[1,\infty)\to[1,\infty)$ with $f(1)=1$ and at most $O_d(1)$ distinct pieces, each of degree a non-negative integer at most $O_d(1)$, such that
\begin{equation}\label{eq:tao.growth.conc}
\frac{|S^{m\lceil n/2\rceil}|}{|S^{\lceil n/2\rceil}|}\asymp_df(m)
\end{equation}
for every $m\in\N$.

Now fix $m\in\N$. Setting $r=2m\lceil n/2\rceil$ and $r'=\lceil\frac{n}{2}\rceil$ in \eqref{eq:persist.cay.2} implies that
\[
\frac{\beta_\Gamma(mn)}{\beta_\Gamma(n)}\le\frac{\beta_\Gamma(2m\lceil\frac{n}{2}\rceil)}{\beta_\Gamma(n)}
      \ll_d\frac{|S^{2m\lceil n/2\rceil}|}{|S^{\lceil n/2\rceil}|},
\]
and so \eqref{eq:tao.growth.conc} implies that $\beta_\Gamma(mn)\ll_d f(2m)\beta_\Gamma(n)$ and then \cref{lem:p/w.mono} gives $\beta_\Gamma(mn)\ll_d f(m)\beta_\Gamma(n)$,
as required. On the other hand, setting $r=n$ and $r'=mn-\lfloor\frac{n}{2}\rfloor$ in \eqref{eq:persist.cay.2} implies that
\[
\frac{\beta_\Gamma(mn)}{\beta_\Gamma(n)}\gg_d\frac{|S^{mn-\lfloor n/2\rfloor}|}{|S^n|}
   \ge\frac{|S^{m\lceil n/2\rceil}|}{|S^{2\lceil n/2\rceil}|}
   =\frac{|S^{m\lceil n/2\rceil}|}{|S^{\lceil n/2\rceil}|}\frac{|S^{\lceil n/2\rceil}|}{|S^{2\lceil n/2\rceil}|}.
\]
It therefore follows from \eqref{eq:tao.growth.conc} that $\beta_\Gamma(mn)\gg_d f(m)\beta_\Gamma(n)/f(2)$, and so \cref{lem:p/w.mono} combines with the fact that $f(1)=1$ to imply that $\beta_\Gamma(mn)\gg_d f(m)\beta_\Gamma(n)$,
as required.
\end{proof}

Before proving \cref{cor:persist.abs} it will be convenient to recall from our previous paper the corresponding result for Cayley graphs.
\begin{theorem}\label{thm:persist.abs.cay}
Given $d\in\N$ there exists $C=C_d>0$ such that if $S$ is a finite symmetric generating set for a group $G$ such that $1\in S$ and $|S^n|\le n^{d+1}/C$ for some $n\ge C$ then for every $m\ge n$ we have $|S^m|\ll_d(m/n)^d|S^n|$.
\end{theorem}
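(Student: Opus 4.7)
The plan is to reduce the problem to the virtually nilpotent setting via Breuillard--Green--Tao's \cref{thm:bgt.grom.ag} and then to exploit the integer nature of growth degrees of nilpotent groups, together with the \emph{strict} hypothesis $|S^n|\le n^{d+1}/C$, to rule out asymptotic growth of degree $d+1$.

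First I would apply \cref{lem:poly.pigeon} with $q=3$, $\alpha=1/2$ and $\beta=3/4$ to the hypothesis, producing an intermediate scale $n'\in[\lfloor n^{1/2}\rfloor,n^{3/4}]$ at which $|S^{3n'}|\ll_d|S^{n'}|$. By the discrete analogue of \cref{prop:tripling} the set $S^{2n'}$ is a $K_d$-approximate group, so provided $n$ (and hence $C$) is large enough in terms of $d$, Tao's piecewise-monomial growth theorem \cite[Theorem 1.9]{tao.growth} applies at scale $n'$ and produces a non-decreasing continuous piecewise-monomial function $f:[1,\infty)\to[1,\infty)$ with $f(1)=1$, with at most $O_d(1)$ pieces each of non-negative integer degree at most $O_d(1)$, satisfying $|S^{mn'}|\asymp_d f(m)|S^{n'}|$ for every $m\in\N$. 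Let $D$ denote the degree of the rightmost piece of $f$; by \cref{lem:p/w.mono} we then have $f(m)\le m^D$ for every $m\ge 1$.

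The crux of the argument is to show $D\le d$ provided $C=C_d$ is chosen large enough. Assume for contradiction that $D\ge d+1$. Applying \cref{thm:bgt.grom.ag} to $S^{2n'}$ yields a normal subgroup $H\subset S^{O_d(n')}$ such that a finite-index subgroup of $G/H$ is nilpotent of rank and step at most $O_d(1)$. The underlying nilpotent-progression structure should then afford a uniform Bass-type lower bound of the form $|S^{n'}|\gg_d(n')^D\ge(n')^{d+1}$. Combining this with $|S^{\lfloor n/n'\rfloor\cdot n'}|\gg_d(n/n')^D|S^{n'}|$, which holds because $n/n'\ge n^{1/4}$ lies safely within the rightmost monomial piece once $n$ is large enough in terms of $d$, yields
\[
|S^n|\gg_d(n/n')^{d+1}(n')^{d+1}\gg_d n^{d+1}.
\]
For $C$ chosen larger than the reciprocal of the implicit constant, this contradicts $|S^n|\le n^{d+1}/C$, so $D\le d$ as required.

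The main obstacle is the uniform Bass-type lower bound for $|S^{n'}|$ used above: this is the heart of any such persistence argument, and turning the qualitative nilpotent structure of \cref{thm:bgt.grom.ag} into quantitative control with constants depending only on $d$ (and not on the specific group) requires the detailed approximate-group machinery developed in the previous paper~\cite{tt}. Once $D\le d$ is established, the conclusion follows cleanly: for $m\ge n$, writing $k=\lceil m/n'\rceil$ and $k_0=\lfloor n/n'\rfloor$, Tao's theorem gives $|S^m|\ll_d f(k)|S^{n'}|$ and $|S^n|\gg_d f(k_0)|S^{n'}|$, whence $|S^m|\ll_d(f(k)/f(k_0))|S^n|$; applying \cref{lem:p/w.mono} to the non-decreasing piecewise-monomial $f$ of maximal degree at most $d$ then gives $f(k)/f(k_0)\le(k/k_0)^d\asymp(m/n)^d$, yielding the desired bound $|S^m|\ll_d(m/n)^d|S^n|$.
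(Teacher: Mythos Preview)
The paper does not prove this theorem from scratch: it simply observes that the statement is a direct reformulation of \cite[Theorem~1.11]{tt}, taking $C$ to be the constant $N_d$ from that result and applying it with $D=d$ and $M=n/N_d$. Your proposal, by contrast, sketches an independent argument via Tao's piecewise-monomial theorem and \cref{thm:bgt.grom.ag}, but then concedes that the decisive step---the uniform lower bound $|S^{n'}|\gg_d(n')^{d+1}$ with constants depending only on $d$---``requires the detailed approximate-group machinery developed in the previous paper~\cite{tt}''. Since that machinery is precisely what \cite[Theorem~1.11]{tt} packages, your argument is not an alternative proof but a circuitous route back to the same citation. If you are going to invoke \cite{tt} anyway, you may as well invoke it directly, as the paper does.

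There are also genuine gaps in the sketch beyond this admission. First, the assertion that $n/n'$ ``lies safely within the rightmost monomial piece once $n$ is large enough in terms of $d$'' is unjustified: Tao's theorem bounds the \emph{number} of monomial pieces by $O_d(1)$, but the locations of the breakpoints depend on the particular pair $(G,S)$ and can be arbitrarily large. Second, even granting that the degrees of $f$ are non-increasing (so that $D\ge d+1$ forces $f(m)\ge m^{d+1}$ for all $m$), you only obtain $|S^n|\gg_d(n/n')^{d+1}|S^{n'}|$; converting this into $|S^n|\gg_d n^{d+1}$ still requires exactly the uniform Bass-type bound you flag as the obstacle. Without that input the contradiction with $|S^n|\le n^{d+1}/C$ cannot be closed, regardless of how large $C$ is taken.
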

\begin{proof}
This follows directly by taking $C$ to be the constant $N_d$ coming from \cite[Theorem 1.11]{tt}, and then applying that theorem with $D=d$ and $M=n/N_d$.
\end{proof}
\begin{remark*}We leave it to the reader to verify that, conversely, Theorem \ref{thm:persist.abs.cay} implies \cite[Theorem 1.11]{tt}.
\end{remark*}

\begin{proof}[Proof of \cref{cor:persist.abs}]
The fact that $\beta_\Gamma(n)\le n$ implies $\Gamma=B_\Gamma(x,n)$ simply follows from the fact that $\beta_\Gamma(0)=1$ and $\beta_\Gamma(m+1)>\beta_\Gamma(m)$ for every $m<\diam(\Gamma)$. To prove the rest of the corollary, assuming \eqref{eq:persist.abs.hyp} holds, let $S$ be the set obtained by applying \cref{lem:poly.reduc.to.cay}. Applying \eqref{eq:persist.cay} with $r=\lceil n/2\rceil$ then implies that
\begin{equation}\label{eq:benj.hyp.cay}
|S^{\lceil n/2\rceil}|\ll_d\beta_\Gamma(n)\ll_d\frac{(\textstyle\lceil\frac{n}{2}\rceil)^{d+1}}{C}.
\end{equation}
Provided $C$ is large enough in terms of $d$ only, \cref{thm:persist.abs.cay} therefore implies that $|S^m|\ll_d(m/n)^d|S^{\lceil n/2\rceil}|$ for every $m\ge\lceil n/2\rceil$, and so \cref{cor:persist.abs} follows from applying \eqref{eq:persist.cay.2} with $r=m$ and $r'=\lceil n/2\rceil$.
\end{proof}

\begin{proof}[Proof of \cref{cor:mod.growth}]
Taking $n=1$ in the definition of moderate growth immediately gives \eqref{eq:mod.growth.conc}. For the converse we use a similar argument to \cite{bt}. Write $\gamma=\diam(\Gamma)$, and write $d=\delta^{-1}$. Let $n\in\N$. First, suppose that $\gamma^{1/2}\le n\le\gamma$, and note then that \eqref{eq:mod.growth.hyp} implies that $\beta_\Gamma(n)\le n^{2d}\beta_\Gamma(1)$. Provided $\gamma$ is large enough in terms of $\delta$, \cref{cor:tao.growth,lem:p/w.mono} then imply that $\beta_\Gamma(mn)\ll_\delta m^{O_\delta(1)}\beta_\Gamma(n)$ for every $m\ge n$, and hence in particular that $|\Gamma|=\beta_\Gamma(\gamma)\ll_\delta \left(\frac{\gamma}{n}\right)^{O_\delta(1)}\beta_\Gamma(n)$.
This implies that $\Gamma$ satisfies the definition of $(O_\delta(1),O_\delta(1))$-moderate growth for this value of $n$. If $n\le\gamma^{1/2}$, on the other hand, then \eqref{eq:mod.growth.hyp} implies that
\[
\frac{\beta_\Gamma(n)}{|\Gamma|}\ge\frac{\beta_\Gamma(1)}{|\Gamma|}\ge\frac{1}{\gamma^d}\ge\left(\frac{n}{\gamma}\right)^{2d},
\]
and so $\Gamma$ satisfies the definition of $(1,2d)$-moderate growth for this value of $n$.
\end{proof}


\begin{thebibliography}{10}
\bibitem{bk}
I. Benjamini and G. Kozma. A resistance bound via an isoperimetric inequality, \textit{Combinatorica} \textbf{25}(6) (2005), 645--650.
\bibitem{bft}
I. Benjamini, H. Finucane and R. Tessera. On the scaling limit of finite vertex transitive graphs with large diameter.  \textit{Combinatorica} \textbf{36} (2016), 1--41.
\bibitem{bgt}
E. Breuillard, B. J. Green and T. C. Tao. The structure of approximate groups, \textit{Publ. Math. IHES.} \textbf{116}(1) (2012), 115--221.
\bibitem{bt}
E. Breuillard and M. C. H. Tointon. Nilprogressions and groups with moderate growth, \textit{Adv. Math.} \textbf{289} (2016), 1008--1055.
\bibitem{Ca}
P. K. Carolino. The Structure of Locally Compact Approximate Groups. PhD thesis: \url{https://escholarship.org/uc/item/8388n9jk}.
\bibitem{dsc}
P. Diaconis and L. Saloff-Coste. Moderate growth and random walk on finite groups, \textit{Geom. Funct. Anal.} \textbf{4}(1) (1994), 1--36.
\bibitem{dl}
R. Diestel and I. Leader. A conjecture concerning a limit of non-Cayley graphs, \textit{J. Algebraic Combin.} \textbf{14} (2001), 17--25.
\bibitem{efw}
A. Eskin, D. Fisher and K. Whyte. Quasi-isometries and rigidity of solvable groups, \textit{Pure Appl. Math. Q.} \textbf{3} (2007), Special Issue: In honor of Grigory Margulis. Part 1, 927--947.
\bibitem{gromov}
M. Gromov. Groups of polynomial growth and expanding maps, \textit{Publ. Math. IHES} \textbf{53} (1981), 53--73.
\bibitem{hall}
M. Hall. \textit{The theory of groups}, Amer. Math. Soc./Chelsea, Providence, RI (1999).
\bibitem{her-pym}
J. Hermon and R. Pymar. The exclusion process mixes (almost) faster than independent particles, arXiv:1808.10846v2.
\bibitem{hew-ross}
E. Hewitt and K. A. Ross. \textit{Abstract Harmonic Analysis I (2nd ed.)}, Springer-Verlag, Berlin (1979).
\bibitem{kleiner}
B. Kleiner. A new proof of Gromov's theorem on groups of polynomial growth, \textit{Jour. of the AMS} \textbf{23}(3) (2010), 815--829.
\bibitem{lev-per}
D. A. Levin and Y. Peres, with contributions by E. L. Wilmer. \textit{Markov Chains and Mixing Times (2nd ed.)}, American Mathematical Society, Providence, RI (2017).
\bibitem{losert}
V. Losert. On the structure of groups with polynomial growth, \textit{Math. Z.} \textbf{195} (1987), 109--117.
\bibitem{ozawa}
N. Ozawa. A functional analysis proof of Gromov's polynomial growth theorem, \textit{Ann. Sci. Éc. Norm. Supér. (4)} \textbf{51}(3) (2018), 549--556.
\bibitem{shalom-tao}
Y. Shalom and T. C. Tao. A finitary version of Gromov's polynomial growth theorem, \textit{Geom. Funct. Anal.} \textbf{20}(6) (2010), 1502--1547.
\bibitem{tao.product.set}
T. C. Tao. Product set estimates for non-commutative groups, \textit{Combinatorica} \textbf{28}(5) (2008), 547--594.
\bibitem{tao.growth}
T. C. Tao. Inverse theorems for sets and measures of polynomial growth, \textit{Q. J. Math.} \textbf{68}(1) (2017), 13--57.
\bibitem{tt}
R. Tessera and M. C. H. Tointon. Properness of nilprogressions and the persistence of polynomial growth of given degree, \textit{Discrete Anal.} 2018:17, 38 pp.
\bibitem{ttBK}
R. Tessera and M. C. H. Tointon. Sharp relations between volume growth, isoperimetry and resistance in vertex-transitive graphs, arXiv:2001.01467.
\bibitem{ttLie}
R. Tessera and M. C. H. Tointon. Lie group approximations of balls with polynomial volume in vertex-transitive graphs, in preparation.
\bibitem{nilp.frei}
M. C. H. Tointon. Freiman's theorem in an arbitrary nilpotent group, \textit{Proc. London Math. Soc.} (3) \textbf{109} (2014), 318--352.
\bibitem{book}
M. C. H. Tointon. \textit{Introduction to approximate groups}, London Mathematical Society Student Texts \textbf{94}, Cambridge University Press, Cambridge (2020).
\bibitem{trof}
V. I. Trofimov. Graphs with polynomial growth, \textit{Math. USSR-Sb.} \textbf{51} (1985) 405--417.
\bibitem{woess} W. Woess. Topological groups and infinite graphs,  \textit{Discrete Math.} \textbf{95} (1991), 373--384. 
\end{thebibliography}
\end{document}